\newenvironment{abstracts}{%
  \ifx\maketitle\relax
    \ClassWarning{\@classname}{Abstract should precede
      \protect\maketitle\space in AMS document classes; reported}%
  \fi
  \global\setbox\abstractbox=\vtop \bgroup
    \normalfont\Small
    \list{}{\labelwidth\z@
      \leftmargin3pc \rightmargin\leftmargin
      \listparindent\normalparindent \itemindent\z@
      \parsep\z@ \@plus\p@
      
      \itemsep\medskipamount
    }%
}{%
  \endlist\egroup
  \ifx\@setabstract\relax \@setabstracta \fi
}
\newcommand{\abstractin}[1]{%
  \otherlanguage{#1}%
  \item[\hskip\labelsep\scshape\abstractname.]%
}
\newtheorem{Theorem}{Theorem}[section]
\newtheorem{Lemma}[Theorem]{Lemma}
\newtheorem{Proposition}[Theorem]{Proposition}
\theoremstyle{definition}
\newtheorem{Definition}[Theorem]{Definition}
\newtheorem{Remark}[Theorem]{Remark}
\numberwithin{equation}{section}
\def \dim{{\mbox {dim}}\,}
\def\V{\mbox{Var}}
\def\R\re
\def\V{\bf V}
\def \re{{\mathbb R}}
\def \mR{{\mathbb R}}
\def \C{{\mathbb C}}
\def \mC{{\mathbb C}}
\def \V{{\bf V}}
\def \l{\ell}
\newcommand{\mc}{\mathcal}
\newcommand{\id}{\mathrm{Id}}
\renewcommand{\dim}{\mathrm{dim}}
\newcommand{\abs}[1]{\lvert #1 \rvert}
\newcommand{\norm}[1]{\lVert #1 \rVert}
\newcommand{\br}[1]{\langle #1 \rangle}
\newcommand{\eps}{\varepsilon}
\newcommand{\ol}[1]{\overline{#1}}
\def \vd{\overset{\tt{v}}{\nabla}}
\def \hd{\overset{\tt{h}}{\nabla}}
\def \vdiv{\overset{\tt{v}}{\mbox{\rm div}}}
\def \hdiv{\overset{\tt{h}}{\mbox{\rm div}}}
\def \GL{\operatorname{GL}}
\newcounter{sidenote}
\begin{document}

\title[Carleman estimates for geodesic X-ray transforms]{Carleman estimates for geodesic X-ray transforms} 

\author[G.P. Paternain]{Gabriel P. Paternain}
\address{ Department of Pure Mathematics and Mathematical Statistics,
University of Cambridge,
Cambridge CB3 0WB, UK}
\email {g.p.paternain@dpmms.cam.ac.uk}

\author[M. Salo]{Mikko Salo}
\address{Department of Mathematics and Statistics, University of Jyv\"askyl\"a}
\email{mikko.j.salo@jyu.fi}




\begin{abstracts}
\abstractin{english}In this article we introduce an approach for studying the geodesic X-ray transform and related geometric inverse problems by using Carleman estimates. The main result states that on compact negatively curved manifolds (resp.\ nonpositively curved simple or Anosov manifolds), the geodesic vector field satisfies a Carleman estimate with logarithmic weights (resp.\ linear weights) on the frequency side. As a particular consequence, on negatively curved simple manifolds the geodesic X-ray transform with attenuation given by a general connection and Higgs field is invertible modulo natural obstructions. The proof is based on showing that the Pestov energy identity for the geodesic vector field completely localizes in frequency. Our approach works in all dimensions $\geq 2$, on negatively curved manifolds with or without boundary, and for tensor fields of any order.

\end{abstracts}

\maketitle

\tableofcontents

\section{Introduction}

\noindent {\bf Motivation.} 
The geodesic X-ray transform is a central object in geometric inverse problems. In Euclidean space it reduces to the standard X-ray transform, which encodes the integrals of a function over straight lines and provides the mathematical model for imaging methods such as X-ray CT and PET \cite{Natterer}. If the underlying medium is not Euclidean, one needs to consider more general curve families. On a Riemannian manifold the geodesic curves provide a natural candidate, and the geodesic X-ray transform encodes the integrals of a function over geodesics. This transform plays an important role (often via linearization or pseudo-linearization arguments) in inverse problems such as 
\begin{itemize}
\item 
boundary and lens rigidity on manifolds with boundary \cite{PestovUhlmann, Guillarmou, SUV_full};
\item 
spectral rigidity on closed manifolds with Anosov geodesic flow \cite{PSU3};
\item 
inverse boundary value problems for hyperbolic equations \cite{StefanovYang};
\item 
inverse boundary value problems for elliptic equations (Calder\'on problem) \cite{DKSaU, DKLS}.
\end{itemize}

Several approaches have been introduced for studying the X-ray transform:
\begin{itemize}
\item 
direct methods such as Fourier analysis in symmetric geometries \cite{Helgason};
\item 
microlocal methods based on interpreting the transform or its normal operator as a Fourier integral or pseudodifferential operator \cite{SU3, SU09, UhlmannVasy};
\item 
reductions to PDE and energy methods \cite{Sh, PSU1, GPSU}.
\end{itemize}
In particular, the works \cite{PSU1, GPSU, UhlmannVasy} involve various $L^2$ estimates with exponential weights. This suggests that the Carleman estimate methodology, which relies on exponentially weighted $L^2$ estimates and provides a very powerful general approach to uniqueness results for linear PDE \cite[Chapter XXVIII]{Hormander}, could have consequences for the geodesic X-ray transform as well. However, as explained in Section \ref{sec_analogies}, the PDE related to the X-ray transform are nonstandard and thus the existing theory of Carleman estimates cannot be directly applied.

This paper presents the first steps toward a Carleman estimate approach for the geodesic X-ray transform. We prove two Carleman estimates for the geodesic vector field, one with logarithmic and another with linear Carleman weights, that have the right form in order to be applied to the geodesic X-ray transform. As a consequence we obtain invertibility results for certain weighted X-ray transforms that correspond to large lower order perturbations in the related PDE, and uniqueness results in related problems such as scattering rigidity and transparent connections. Our approach works in all dimensions $\geq 2$, on manifolds with or without boundary, and for tensor fields of any order. However, the main Carleman estimate requires that the manifold has negative curvature, and the Carleman weights are purely on the frequency side (one can think of them as pseudodifferential weights) and thus the estimates do not currently lead to local results for the X-ray transform on the space side as in \cite{UhlmannVasy}.

\vspace{15pt}

\noindent {\bf Main estimates.}
Let $(M,g)$ be a compact oriented Riemannian manifold with or without boundary, and let $X$ be the geodesic vector field regarded as a first order differential operator $X:C^{\infty}(SM)\to C^{\infty}(SM)$ acting on functions on the unit sphere bundle $SM$. Our first result is a new energy estimate for $X$ when $g$ is negatively curved. 
The new estimate involves polynomial weights on the Fourier side and it can be regarded as a {\it Carleman estimate} for the transport equation $Xu=f$ (see below for analogies with elliptic operators). To describe it we recall some basic harmonic analysis on $SM$. 
By considering the vertical Laplacian $\Delta$ on each fibre $S_{x}M$ of $SM$ we have a natural $L^2$-decomposition $L^{2}(SM)=\oplus_{m\geq 0}H_m(SM)$
into vertical spherical harmonics. We set $\Omega_m:=H_{m}(SM)\cap C^{\infty}(SM)$. Then a function $u$ belongs to $\Omega_m$ if and only if
$-\Delta u=m(m+d-2)u$ where $d=\mathrm{dim}(M)$ (for details see \cite{GK2,DS}). If $u\in L^{2}(SM)$, this decomposition will be written as
\[u=\sum_{m=0}^{\infty}u_{m}, \;\;\;\;\;\;\;u_{m}\in H_{m}(SM).\]
We say that $u$ has \emph{finite degree} if the above sum is finite.

Here is our first main result (the norms are $L^2(SM)$ norms).

\begin{Theorem} \label{thm_carleman_negativecurvature_intro}
Let $(M,g)$ be a compact Riemannian manifold with sectional curvature $\leq -\kappa$ where $\kappa > 0$. Let also $\phi_{\ell} = \log(\ell)$. For any $\tau \geq 1$ and $m \geq 1$, one has 
\begin{align*}
\sum_{\ell=m}^{\infty} e^{2\tau \phi_{\ell}} \norm{u_\ell}^2 \leq \frac{C}{\kappa \tau} \sum_{\ell=m+1}^{\infty} e^{2\tau \phi_\ell}\norm{(Xu)_{\ell}}^2
\end{align*}
whenever $u \in C^{\infty}(SM)$ (with $u|_{\partial(SM)} = 0$ in the boundary case), where $C$ is a positive constant depending only on the dimension of $M$.
\end{Theorem}

We have a similar estimate for compact manifolds with nonpositive curvature, provided that they are \emph{simple} (simply connected with strictly convex boundary and no conjugate points) or \emph{Anosov} (no boundary, and the geodesic flow satisfies the Anosov property). Both the simple and Anosov conditions can be seen as slightly strengthened forms of the condition that no geodesic has conjugate points (see \cite{PSU3}). However, if the curvature is only nonpositive, the logarithmic weights above need to be replaced by stronger linear weights. We remark that both logarithmic and linear weights have been prominent in the theory of Carleman estimates, see e.g. \cite{JerisonKenig, KenigRuizSogge}.

\begin{Theorem} \label{thm_pestov_carleman_nonpositivecurvature_intro}
Let $(M,g)$ be a simple/Anosov Riemannian manifold having nonpositive sectional curvature. There exist $m_0, \tau_0, \kappa > 0$ such that for any $\tau \geq \tau_0$, for any $m \geq m_0$ and for any $u \in C^{\infty}(SM)$ of finite degree (with $u|_{\partial(SM)} = 0$ in the boundary case) one has 
\[
\sum_{\l=m}^{\infty} e^{2\tau \phi_\l} \norm{u_\l}^2 \leq \frac{24}{\kappa e^{2 \tau}} \sum_{\l=m+1}^{\infty} e^{2\tau \phi_\l} \norm{(Xu)_\l}^2
\]
where $\phi_\l = \l$.
\end{Theorem}

Both theorems above are based on weighted frequency localized versions of the \emph{Pestov energy identity}, which has been the main tool in energy methods for X-ray transforms (see e.g.\ \cite{Sh, PSU1, PSU_hd}). The Pestov identity has been used in many forms. A powerful recent variant is the inequality proved in \cite{PSU_hd}, 
\begin{equation} \label{intro_pestov_inequality}
\norm{\nabla_{SM} u} \leq C \norm{\vd X u},
\end{equation}
which is valid on any compact simple/Anosov manifold for any $u \in C^{\infty}(SM)$ (with $u|_{\partial(SM)} = 0$ in the boundary case). Here $\vd$ is the vertical gradient (see Section \ref{sec_preliminaries}). So far the Pestov identity has been expressed in terms of $L^2$ norms, and it has not been known if it is possible to shift the related estimates to different Sobolev scales. However, Theorem \ref{thm_carleman_negativecurvature_intro} is actually a shifted version of the Pestov identity with respect to the mixed norms 
\[
\norm{u}_{L^2_x H^s_v} = \left( \sum_{\l=0}^{\infty} \br{\l}^{2s} \norm{u_\l}^2 \right)^{1/2}
\]
where $s \in \mR$ and $\br{\l} = (1+\l^2)^{1/2}$. Similar mixed norms for $\nabla_{SM} u$ and $\vd Xu$ are defined in Section \ref{sec_carleman_negative_curvature}. Then, the inequality \eqref{intro_pestov_inequality} can be expressed as 
\[
\norm{\nabla_{SM} u}_{L^2_x H^0_v} \leq C \norm{\vd X u}_{L^2_x H^0_v}.
\]
The next result is a shifted version of this inequality.

\begin{Theorem} \label{thm_pestov_carleman_shifted_intro}
Let $(M,g)$ be a compact Riemannian manifold with sectional curvature $\leq -\kappa$ where $\kappa > 0$. For any $s > -1/2$ there is $C = C_{d,s,\kappa} > 0$ such that 
\[
\norm{\nabla_{SM} u}_{L^2_x H^s_v} \leq C \norm{\vd Xu}_{L^2_x H^s_v}
\]
for any $u \in C^{\infty}(SM)$ (with $u|_{\partial(SM)} = 0$ in the boundary case).
\end{Theorem}

\vspace{5pt}

\noindent {\bf Applications.}

\vspace{5pt}

\noindent {\it Uniqueness results for the transport equation.} 
The significance of Theorem \ref{thm_carleman_negativecurvature_intro} is best illustrated by considering the transport equation 
\[
Xu=-f \text{ in $SM$}, \qquad u|_{\partial(SM)} = 0,
\]
where $f$ has \emph{finite degree $m$} (i.e.\ $f_{\l}=0$ for $\l\geq m+1$). Then Theorem \ref{thm_carleman_negativecurvature_intro} implies that any solution $u \in C^{\infty}(SM)$ must satisfy $u_{\l}=0$ for $\l\geq m$ ($u$ has finite degree $m-1$) at least when $m \geq 1$.
This fact provides a full solution to the tensor tomography problem in negative curvature (see \cite{PS, CS} and \cite{PSU4} for a recent survey).
The case $m=2$ is at the heart of the proof that closed negatively curved manifolds are spectrally rigid \cite{CS,GK}.
Given an isospectral deformation $g_{\varepsilon}$ of the metric $g$ it was shown in \cite{GK} that the symmetric 2-tensor $f:=\frac{\partial g_{\varepsilon}}{\partial\varepsilon}|_{\varepsilon=0}$ must have the property that it integrates to zero along every closed geodesic of $g$. Since the geodesic flow of $g$ is Anosov an application of the Livsic theorem  \cite{dLMM} shows that there is a smooth function $u$ such that $Xu=\tilde{f}\in \Omega_{0}\oplus\Omega_{2}$, where $\tilde{f}$ is the natural function induced on $SM$ by the symmetric 2-tensor $f$. The Carleman estimate in Theorem \ref{thm_carleman_negativecurvature_intro} implies that $u=u_{1}$ and this uniquely determines a vector field $V$ on $M$. Doing this for every $\varepsilon$ produces a parameter dependent vector field $V_{\varepsilon}$ whose flow makes the deformation $g_{\varepsilon}$ trivial, thus showing spectral rigidity.

However, the estimate in Theorem \ref{thm_carleman_negativecurvature_intro} is considerably more powerful since we can in principle adjust the parameter $\tau$ at will as long as we know that $u\in C^{\infty}(SM)$.
Here is the typical application we have in mind. Suppose that $\Phi \in C^{\infty}(M, \mC^{n\times n})$ is a smooth $n\times n$ matrix-valued potential in $M$ (we call $\Phi$ a \emph{general Higgs field}). Consider the attenuated transport equation
\begin{equation}
(X+\Phi)u=-f_{0}\in\Omega_0,   \qquad u|_{\partial(SM)} = 0
\label{eq:transport1}
\end{equation}
where $u\in C^{\infty}(SM,\mC^n)$.  The objective is to show that $f_0=0$; this is precisely the uniqueness problem that arises when trying to prove injectivity of the attenuated geodesic X-ray transform where the attenuation is given
by the matrix $\Phi$ (see below).
Since $(\Phi u)_{\l}=\Phi  u_{\l}$, we see using \eqref{eq:transport1} that 
\[
\norm{(Xu)_{\l}}\leq C\norm {u_{\l}}, \qquad \l \geq 1,
\]
 where $C = \norm{\Phi}_{L^{\infty}(M)}$. If we input this information into Theorem \ref{thm_carleman_negativecurvature_intro}
and we take $\tau$ sufficiently large  (depending on $\norm{\Phi}_{L^{\infty}(M)}$) we deduce that 
$u=u_0\in\Omega_0$. Since $Xu_0\in\Omega_1$, using \eqref{eq:transport1} we obtain $f_0=0$ as desired.

However, the Carleman estimate can also deal with other matrix attenuations like {\it connections}.
For our purposes these are simply $n\times n$ matrices $A$ of complex valued 1-forms, i.e.\ $A \in C^{\infty}(M, (\Lambda^1 M)^{n \times n})$. We call such geometric objects \emph{general connections} on $M$, and they can be naturally considered as smooth functions $A:SM\to \mC^{n\times n}$ such that $A\in\Omega_{1}$. More generally, the Carleman estimate even makes it possible to include nonlinear attenuation terms in the transport equation. Here is a general result in this direction.

\begin{Theorem} \label{thm_xray_transport_intro}
Let $(M,g)$ be a compact Riemannian manifold with sectional curvature $\leq -\kappa$ where $\kappa > 0$. Let $\l_0 \geq 2$, and suppose that $\mathcal{A}: C^{\infty}(SM, \mC^n) \to C^{\infty}(SM, \mC^n)$ is a map such that 
\[
\norm{(\mathcal{A}(u))_\l} \leq C ( \norm{u_{\l-1}} + \norm{u_\l} + \norm{u_{\l+1}} ), \qquad \l \geq \l_0.
\]
If $f \in C^{\infty}(SM, \mC^n)$ has finite degree and if $u \in C^{\infty}(SM, \mC^n)$ satisfies 
\[
Xu + \mathcal{A}(u) = -f \text{ in $SM$}, \qquad u|_{\partial(SM)} = 0,
\]
then $u$ has finite degree.
\end{Theorem}

The previous result can be interpreted as a fundamental uniqueness, or vanishing, theorem for the attenuated transport equation. In \cite{GPSU}, such a result was proved on negatively curved manifolds in the special (linear) case 
\[
\mathcal{A}(u) = (A+\Phi)u
\]
where $A$ and $\Phi$ are a \emph{skew-Hermitian} connection and Higgs field (when $A$ is skew-Hermitian, the connection is also referred to as {\it unitary} because the underlying structure group is the unitary group). The skew-Hermitian condition implies that the transport operator $X + A + \Phi$ is skew-symmetric, which led to improved energy estimates in \cite{GPSU}. In our case, where $A$ and $\Phi$ can be general matrices, there is a loss of symmetry and large error terms appear in the energy estimates. The Carleman estimate in Theorem \ref{thm_carleman_negativecurvature_intro} is required to deal with these large errors (see Section \ref{sec_analogies} for a more detailed discussion).

In the more general case where $(M,g)$ is a compact simple/Anosov manifold with nonpositive sectional curvature, one could use Theorem \ref{thm_pestov_carleman_nonpositivecurvature_intro} to prove an analogue of Theorem \ref{thm_xray_transport_intro} with two additional restrictions: the map $\mathcal{A}$ should satisfy 
\[
\norm{(\mathcal{A}(u))_\l} \leq C \norm{u_\l}, \qquad \l \geq \l_0,
\]
and the solution $u \in C^{\infty}(SM, \mC^n)$ should additionally satisfy 
\begin{equation} \label{exponential_decay_regularity}
\lim_{\l \to \infty} e^{\tau \l} \norm{X_- u_\l} = 0,
\end{equation}
where $X_-$ is the part of $X$ that maps $\Omega_m$ to $\Omega_{m-1}$ (see Section \ref{sec_preliminaries}) and $\tau>0$ is sufficiently large (depending on $\mathcal A$). The condition for $\mathcal{A}$ is satisfied if $\mathcal{A}$ is a general Higgs field, but \eqref{exponential_decay_regularity} should be viewed as an additional regularity condition (it states that the Fourier coefficients of $X_- u$ decay exponentially, which corresponds to real-analyticity of $X_- u$ in the $v$ variable). Thus, the Carleman estimate in Theorem \ref{thm_pestov_carleman_nonpositivecurvature_intro} shows that the injectivity result for a general Higgs field reduces to the regularity result \eqref{exponential_decay_regularity}. 
However, it is not at all clear how to establish \eqref{exponential_decay_regularity}.  For simple surfaces, it is possible to bypass these difficulties using a loop group factorization result and reducing the problem to the skew-hermitian case, see \cite{PS_LoopG}.


Theorem \ref{thm_xray_transport_intro} easily implies uniqueness results for X-ray transforms, whenever one has a regularity result showing that the vanishing of the X-ray transform yields a $C^{\infty}$ solution $u$ of a transport equation. Let us next discuss this in more detail.

\vspace{5pt}

\noindent {\it The attenuated X-ray transform.} Let $(M,g)$ be a compact Riemannian manifold with smooth boundary. We assume that $(M,g)$ is \emph{nontrapping}, i.e.\ for any $(x,v) \in SM$ the geodesic $\gamma_{x,v}(t)$ through $(x,v)$ exits $M$ at some finite time $\tau(x,v) \geq 0$. Given $a \in C^{\infty}(M)$ (scalar attenuation), the attenuated geodesic X-ray transform of $f$ is the map 
\[
I_a f: \partial_+(SM) \to \mC, \ \ I_a f(x,v) = \int_0^{\tau(x,v)} f(\gamma_{x,v}(t)) \,e^{\int_0^{t}a(\gamma_{x,v}(s)) \,ds} \,dt,
\]
where $\partial_{\pm}(SM) = \{ (x,v) \in SM \,;\, x \in \partial M, \pm \langle v, \nu \rangle \leq 0 \}$ are the inward and outward pointing boundaries of $SM$ (with $\nu$ being the outer unit normal to $\partial M$). It is easy to see that $I_a f$ can be equivalently defined as 
\[
I_a f = u|_{\partial_+(SM)}
\]
where $u$ is the solution of 
\[
(X+a)u = -f \text{ in $SM$}, \qquad u|_{\partial_-(SM)} = 0.
\]

More generally, we can consider the vector-valued case where $f \in C^{\infty}(SM, \mC^n)$, and the attenuation is given by a general connection $A \in C^{\infty}(M, (\Lambda^1 M)^{n \times n})$ and a general Higgs field $\Phi \in C^{\infty}(M, \mC^{n \times n})$. The matrix of $1$-forms $A$ can be identified with the connection $\nabla = d + A$ on the trivial bundle $\mathcal{E} = M \times \mC^n$. In this case, the attenuated X-ray transform of $f$ is defined as 
\[
I_{A+\Phi} f = u|_{\partial_+(SM)},
\]
where $u$ is the solution of 
\[
(X + A + \Phi) u = -f \text{ in $SM$}, \qquad u|_{\partial_-(SM)} = 0.
\]

The basic inverse problem for $0$-tensors (resp.\ $m$-tensors) is the following: given $f \in C^{\infty}(M, \mC^n)$ (resp.\ $f \in C^{\infty}(SM, \mC^n)$ with degree $m$) such that $I_{A,\Phi} f = 0$, is it true that $f \equiv 0$ (resp.\ $f = -(X+A+\Phi)u$ for some $u \in C^{\infty}(SM, \mC^n)$ of degree $m-1$ with $u|_{\partial(SM)} = 0$)? The following theorem gives an affirmative answer on nontrapping negatively curved manifolds.

\begin{Theorem} \label{thm_intro_attenuated_xray}
Let $(M,g)$ be a compact Riemannian nontrapping negatively curved manifold with $C^{\infty}$ boundary, let $A$ be a general connection, and let $\Phi$ be a general Higgs field in $M$. Let $f \in C^{\infty}(SM, \mC^n)$, and assume either that $M$ has strictly convex boundary, or that $f$ is supported in the interior of $SM$. If $f$ has degree $m \geq 0$ and if the attenuated X-ray transform of $f$ vanishes (i.e.\ $I_{A+\Phi} f = 0$), then 
\[
f = -(X+A+\Phi) u
\]
for some $u \in C^{\infty}(SM,\mC^n)$ with degree $m-1$ such that $u|_{\partial(SM)} = 0$.
\end{Theorem}

This kind of result was proved for skew-Hermitian $A$ and $\Phi$ in \cite{GPSU} when $M$ is negatively curved with strictly convex boundary (in this case there is a stronger regularity result based on the work \cite{Guillarmou}, showing that one can drop the nontrapping assumption). Also, for strictly convex manifolds with $\dim(M) \geq 3$ a stronger result was proved in \cite{PSUZ16}, based on the method of \cite{UhlmannVasy}. Genericity results for simple manifolds are established in \cite{Zhou2017} and reconstruction formulas for simple surfaces (up to a Fredholm error) are given in \cite{MP18}.
The result for nonconvex boundaries is similar to \cite{Da06, GMT} which consider the unattenuated case. 
Theorem \ref{thm_intro_attenuated_xray} is new if the boundary is nonconvex.

\vspace{5pt}

\noindent {\it Scattering rigidity and transparent pairs.} 
Finally, we briefly discuss results for nonlinear geometric inverse problems that can be reduced to Theorems \ref{thm_xray_transport_intro} and \ref{thm_intro_attenuated_xray} via pseudo-linearization arguments. Given a general connection $A$ and a Higgs field $\Phi$ on the trivial bundle $M \times \C^n$, the \emph{scattering data} for $(A,\Phi)$ is the map 
\[
C_{A,\Phi}:\partial_{-}(SM)\to \GL(n,\C), \ \  C_{A,\Phi} = U|_{\partial_{-}(SM)},
\]
where $U:SM\to \GL(n,\C)$ is the unique matrix solution of the transport equation
\[
(X+A+\Phi) U = 0 \text{ in } SM, \quad U|_{\partial_{+}(SM)}=\id.
\]
The data $C_{A,\Phi}$ corresponds to boundary measurements for parallel transport if $(M,g)$ is known (it measures how vectors are transformed when they are parallel transported with respect to $(A,\Phi)$ along geodesics between two boundary points). The map $(A,\Phi)\mapsto C_{A,\Phi}$ is sometimes called the non-abelian Radon transform, or the X-ray transform for a non-abelian connection and Higgs field (see \cite{PSU2, Novikov_nonabelian}). The data $C_{A,\Phi}$ has a natural gauge invariance under a change of basis, i.e., 
\[
C_{Q^{-1}(X+A)Q,Q^{-1} \Phi Q} = C_{A,\Phi} \quad \text{if $Q \in C^{\infty}(M,\GL(n,\C))$ satisfies $Q|_{\partial M} = \id$}.
\]
It follows that from the knowledge of $C_{A,\Phi}$ one can only expect to recover $A$ and $\Phi$ up to a gauge transformation via $Q$ which satisfies $Q|_{\partial M} = \id$. Our next result gives the following positive answer:

\begin{Theorem} \label{thm_intro_scattering_rigidity}
Let $(M,g)$ be a compact Riemannian negatively curved nontrapping manifold, let
$A$ and $B$ be two general connections, and let $\Phi$ and $\Psi$ be two general Higgs fields.
Assume either that $M$ has strictly convex boundary, or that the pairs $(A,\Phi)$, $(B,\Psi)$ are supported in the interior of $M$.
Then $C_{A,\Phi}=C_{B,\Psi}$ implies that there exists $Q \in C^{\infty}(M,\GL(n,\C))$ such that $Q|_{\partial M}=\id$ and
$B=Q^{-1}dQ+Q^{-1}AQ$, $\Psi = Q^{-1} \Phi Q$.
\end{Theorem}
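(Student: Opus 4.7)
The plan is a pseudo-linearization argument that reduces scattering rigidity to Theorem~\ref{thm_intro_attenuated_xray}. Let $U,V\colon SM \to GL(n,\C)$ be the unique matrix solutions of $(X+A+\Phi)U = 0$ and $(X+B+\Psi)V = 0$ with $U|_{\partial_+(SM)} = V|_{\partial_+(SM)} = \id$, and set $Q := UV^{-1}$. The hypothesis $C_{A,\Phi} = C_{B,\Psi}$ gives $U|_{\partial_-(SM)} = V|_{\partial_-(SM)}$, so $Q|_{\partial(SM)} = \id$. Using $X(V^{-1}) = V^{-1}(B+\Psi)$ I would compute
\[
XQ + (A+\Phi)Q - Q(B+\Psi) = 0.
\]
Writing $Q = \id + R$, this rearranges to $(X + \widetilde{\mathcal{A}})R = -f$ with $R|_{\partial(SM)} = 0$, where $f := A - B + \Phi - \Psi$ has finite degree at most $1$ and $\widetilde{\mathcal{A}}(R) := (A+\Phi)R - R(B+\Psi)$.

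The key observation is that, under the isomorphism $\mathrm{End}(\C^n) \cong \C^{n^2}$, the commutator operator $\widetilde{\mathcal{A}}$ becomes multiplication by a matrix of $1$-forms (built linearly from $A$ and $B$) plus a matrix-valued potential (built from $\Phi$ and $\Psi$), that is, a general connection plus general Higgs field on the enlarged bundle $M \times \C^{n^2}$ in the precise sense of Theorem~\ref{thm_intro_attenuated_xray}. Thus the equation $(X + \widetilde{\mathcal{A}})R = -f$ together with $R|_{\partial(SM)} = 0$ says exactly that the attenuated X-ray transform of $f$ on this enlarged bundle vanishes. Since $(M,g)$ is negatively curved and nontrapping and either $\partial M$ is strictly convex or $f$ is interior-supported (inherited from the corresponding hypothesis on $A, B, \Phi, \Psi$), Theorem~\ref{thm_intro_attenuated_xray} applies with $m = 1$ and produces $p \in C^{\infty}(SM, \C^{n \times n})$ of degree $0$ with $p|_{\partial(SM)} = 0$ satisfying $(X + \widetilde{\mathcal{A}})p = -f$. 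Degree $0$ means $p$ is the pullback of some $\tilde{u} \in C^{\infty}(M, \C^{n \times n})$ with $\tilde{u}|_{\partial M} = 0$. Both $R$ and $p$ solve the same first-order linear transport equation with vanishing data on $\partial_-(SM)$, and ODE uniqueness along geodesic segments (of finite length by nontrapping) forces $R = p = \tilde{u}$.

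Therefore $Q = \id + \tilde{u}$ depends only on $x \in M$, takes values in $GL(n,\C)$ because $Q = UV^{-1}$, and satisfies $Q|_{\partial M} = \id$. Splitting the equation $XQ + (A+\Phi)Q - Q(B+\Psi) = 0$ into its vertical spherical harmonic components of degrees $1$ and $0$ produces $dQ + AQ - QB = 0$ and $\Phi Q - Q \Psi = 0$, which rearrange to $B = Q^{-1}dQ + Q^{-1}AQ$ and $\Psi = Q^{-1}\Phi Q$, as required. The only genuine issue I anticipate is in the second step: one must carefully verify that the commutator-type attenuation $\widetilde{\mathcal{A}}$ falls into the exact class of \emph{general connection plus general Higgs field} covered by Theorem~\ref{thm_intro_attenuated_xray} on the enlarged bundle, and that the convexity-or-interior-support hypothesis really is inherited by $f$. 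Once these book-keeping points are settled, the remainder of the argument is essentially algebraic and rests directly on the degree-bookkeeping built into Theorem~\ref{thm_intro_attenuated_xray}.
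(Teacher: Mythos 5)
Your proposal is correct and follows exactly the approach the paper has in mind: the paper states that Theorem~\ref{thm_intro_scattering_rigidity} ``can be proved by introducing a \emph{pseudo-linearization} that reduces the nonlinear problem to the linear one in Theorem~\ref{thm_attenuated_xray_application}'' and refers to \cite{PSU2,PSUZ16} for the (identical) argument, which is precisely the $Q = UV^{-1}$ construction you carry out, including the reduction of the commutator attenuation $\widetilde{\mathcal{A}}(R)=(A+\Phi)R-R(B+\Psi)$ to a general connection plus Higgs field on $M\times\C^{n^2}$. The two book-keeping points you flag are in fact unproblematic (the map $R\mapsto AR-RB$ vectorizes to a matrix of $1$-forms since it is linear in $v$, and $f=A-B+\Phi-\Psi$ inherits interior support from $A,B,\Phi,\Psi$), so no genuine gap remains.
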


Again, when $M$ has strictly convex boundary, this was proved for skew-Hermitian connections and Higgs fields in \cite{GPSU} (also without the non-trapping assumption, based on the regularity theory of \cite{Guillarmou}), and for a more general class of manifolds with $\dim(M) \geq 3$ in \cite{PSUZ16}. 
In the Euclidean case the problem has been extensively studied, cf. \cite{E,FU,No,Novikov_nonabelian}.

An analogous result can be formulated on closed negatively curved manifolds without boundary. In this case, we show that if $A$ is a general connection and $\Phi$ is a general Higgs field in $M$, and if the parallel transport for $(A,\Phi)$ along periodic geodesics is the identity map, then $(A,\Phi)$ is gauge equivalent to the trivial pair $(0,0)$ (i.e.\ there are no \emph{transparent pairs}) unless there is an obstruction given by twisted conformal Killing tensors. We refer to Section \ref{sec_applications} for more details.

\vspace{15pt}

\noindent {\bf Methods and analogies.}
The starting point of the Carleman estimate is a well-known $L^2$ energy identity on $SM$ called
{\it Pestov identity}.  If $u \in C^{\infty}(SM)$, there is a splitting (induced by the Sasaki metric and the Levi-Civita connection)
\[
\nabla_{SM} u = \underbrace{(Xu) X + \hd u}_{\textnormal{x-derivatives}} + \underbrace{\vd u}_{\textnormal{v-derivatives}}.
\]
The basic energy identity for $P:=\vd X$ reads \cite[Proposition 2.2]{PSU_hd}
\begin{equation}
\norm{Pu}^2 = ( ( -X^2 - R)\vd u, \vd u) + (d-1) \norm{Xu}^2
\label{eq:pestov}
\end{equation}
where $R(x,v):\{v\}^{\perp}\to \{v\}^{\perp}$ is the operator determined by the Riemann curvature tensor $R$ of $(M,g)$ by $R(x,v)w=R_{x}(w,v)v$. An important observation of the present paper is that
\eqref{eq:pestov} {\it localizes in frequency}. By this we mean that it is possible to recover \eqref{eq:pestov} by summing in $\l$ the identity \eqref{eq:pestov} applied to functions $u\in \Omega_{\l}$. This was observed in \cite{PSU_hd} for $d=2$, but here we prove it for all dimensions. This fact paves the way for the proof of the Carleman estimates: by multiplying the frequency localized estimates by suitable weights, adding them up and using negative curvature to absorb errors we are able to derive the desired inequality.
The localization in frequency has recently been applied in \cite{PS_MZ} to give a sharp stability estimate for tensor tomography in non-positive curvature.

There is an interesting analogy between the proof of Theorem \ref{thm_intro_attenuated_xray}, which is related to the X-ray transform, and the \emph{absence of embedded eigenvalues} for Schr\"odinger operators. If $H = -\Delta + V - \lambda$ is a Schr\"odinger operator in $\mR^d$ where $V$ is a short-range potential (i.e.\ $\abs{V(x)} \leq C(1+\abs{x})^{-1-\eps}$ for some $\eps > 0$ and for a.e.\ $x \in \mR^d$) and $\lambda > 0$ is a positive energy, the absence of embedded eigenvalues states that any solution $u \in L^2(\mR^d)$ of $Hu = 0$ must satisfy $u \equiv 0$ (see e.g.\ \cite[Section 14.7]{Hormander}). A standard proof of this fact proceeds in three steps:
\begin{enumerate}
\item[1.]
\emph{Rapid decay}: any solution $u$ decays rapidly at infinity.
\item[2.]
\emph{Unique continuation at infinity}: any solution $u$ that decays rapidly at infinity must vanish outside a compact set.
\item[3.]
\emph{Weak unique continuation}: any solution $u$ that vanishes outside a compact set must be identically zero.
\end{enumerate}
Our proof of Theorem \ref{thm_intro_attenuated_xray} follows the exact same pattern, even though the equation is very different. The rapid decay result is replaced by a regularity result from \cite{PSU2}, unique continuation at infinity is provided by Theorem \ref{thm_carleman_negativecurvature_intro}, and weak unique continuation is replaced by the absence of twisted conformal Killing tensors proved in \cite{GPSU}. 

As a final remark, the uniqueness problems studied in this article are actually closer to long-range scattering or the Landis conjecture rather than short-range scattering. For instance, a Higgs field $\Phi$ can be interpreted as a ''potential'' $\mathcal{V}: u \mapsto \Phi u$ in the equation $(X+\Phi)u = -f$. In terms of Fourier coefficients, one has 
\[
\norm{(\mathcal{V}(u))_\l} \leq C \norm{u_\l}.
\]
This means that the potential $\mathcal{V}$ is bounded but has no decay as $\l \to \infty$. The analogue of a short-range condition would be $\norm{(\mathcal{V}(u))_\l} \leq \br{\l}^{-1-\eps} \norm{u_\l}$, and an analogue of a long-range condition would be $\norm{(\mathcal{V}(u))_\l} \leq C \br{\l}^{-\eps} \norm{u_\l}$ for some $\eps > 0$ (and a condition for derivatives, see \cite[Chapter XXX]{Hormander}). The fact that general connections and Higgs fields lead to potentials having no decay as $\l \to \infty$, as in the Landis conjecture \cite{KenigSilvestreWang}, makes the uniqueness questions studied in this article challenging.

\vspace{15pt}


\noindent {\bf Organization of the paper.} Section \ref{sec_preliminaries} contains geometric preliminaries on the unit sphere bundle and vertical spherical harmonics. Section \ref{sec_analogies} discusses in more detail certain useful analogies between the geodesic vector field $X$ and the Laplace operator that have in part motivated this paper. Section \ref{section_pestov_general_connection} describes a general Pestov identity with connection and Section \ref{sec_frequency_localization} contains the key result on frequency localization mentioned above. Section \ref{sec_carleman_negative_curvature} establishes the Carleman estimate in Theorem \ref{thm_carleman_negativecurvature_intro}, and contains a proof Theorem \ref{thm_pestov_carleman_shifted_intro} which interprets the Carleman estimate as a version of the Pestov identity \eqref{eq:pestov}
that has been shifted to a different regularity scale.

Section \ref{sec_nonpositive_curvature} proves the Carleman estimate in nonpositive curvature, Theorem \ref{thm_pestov_carleman_nonpositivecurvature_intro}. 
Section \ref{sec_regularity} contains a regularity result for the transport equation in the case when the boundary is not strictly convex. 
Finally, Section \ref{sec_applications} contains all the applications and the proofs of
Theorems \ref{thm_xray_transport_intro}--\ref{thm_intro_scattering_rigidity}.

\bigskip

\noindent {\bf Acknowledgements.} 
The authors would like to thank Colin Guillarmou for helpful remarks related to Theorem \ref{thm_pestov_carleman_shifted_intro}, and the referee for helpful comments. GPP was partially supported by EPSRC grant EP/R001898/1. MS was partly supported by the Academy of Finland (Finnish Centre of Excellence in Inverse Problems Research, grant numbers 284715 and 309963) and by the European Research Council under FP7/2007-2013 (ERC StG 307023) and Horizon 2020 (ERC CoG 770924).

\section{Geometric preliminaries} \label{sec_preliminaries}


\noindent {\bf Unit sphere bundle.}
To begin, we need to recall certain notions related to the geometry of the unit sphere bundle. We follow the setup and notation of \cite{PSU_hd}; for other approaches and background information see \cite{GK2,Sh,Pa,Kn,DS}.

Let $(M,g)$ be a $d$-dimensional compact Riemannian manifold with or without boundary, having unit sphere bundle $\pi: SM\to M$, and let $X$ be the geodesic vector field. We equip $SM$ with the Sasaki metric. If $\mathcal V$ denotes the vertical subbundle
given by $\mathcal V=\mbox{\rm Ker}\,d\pi$, then there is an orthogonal splitting with respect to the Sasaki metric:
\begin{equation}\label{TSM}
TSM=\re X\oplus {\mathcal H}\oplus {\mathcal V}.
\end{equation}
The subbundle ${\mathcal H}$ is called the horizontal subbundle. Elements in $\mathcal H(x,v)$ and $\mathcal V(x,v)$ are canonically identified with elements in the codimension one subspace $\{v\}^{\perp}\subset T_{x}M$ by the isomorphisms
\[ d\pi_{x,v} : \mc{H}(x,v)\to \{v\}^{\perp} ,  \quad \mc{K}_{x,v}: \mathcal V(x,v)\to \{v\}^{\perp},\]
here $\mc{K}_{(x,v)}$ is the connection map coming from Levi-Civita connection.
We will use these identifications freely below.  

We shall denote by $\mathcal Z$ the set of smooth functions $Z:SM\to TM$ such that $Z(x,v)\in T_{x}M$ and $\langle Z(x,v),v\rangle=0$ for all $(x,v)\in SM$.
Another way to describe the elements of $\mathcal Z$ is a follows. Consider the pull-back bundle $\pi^*TM$ over $SM$.  Let $N$ denote the subbundle of $\pi^*TM$ whose fiber over $(x,v)$
is given by $N_{(x,v)}=\{v\}^{\perp}$ (which we can also identify with $T_{v}S_{x}M$). Then $\mathcal Z$ coincides with the smooth sections
of the bundle $N$. Notice that $N$ carries a natural scalar product and thus an $L^{2}$-inner product 
(using the Liouville measure on $SM$ for integration).

Given a smooth function $u\in C^{\infty}(SM)$ we can consider its gradient $\nabla u$ with respect to the Sasaki metric. 
Using the splitting above we may write uniquely in the decomposition \eqref{TSM}
\[\nabla u=((Xu)X,\hd u,  \vd u). \]
The derivatives $\hd u\in  \mc{Z}$ and $\vd u\in \mc{Z}$ are called horizontal and vertical derivatives respectively. Note that this differs from the definitions in \cite{Kn,Sh} since here all objects are defined on $SM$ as opposed to $TM$.

Observe that $X$ acts on $\mathcal Z$ as follows:
\begin{equation}\label{XonZ}
XZ(x,v):=\frac{DZ(\varphi_{t}(x,v))}{dt}|_{t=0}
\end{equation}
where $D/dt$ is the covariant derivative with respect to Levi-Civita connection and $\varphi_t$ is the geodesic flow.  With respect to the $L^2$-product on $N$, the formal adjoints of $\vd:C^{\infty}(SM)\to\mathcal Z$ and $\hd:C^{\infty}(SM) \to \mathcal Z$ are denoted by $-\vdiv$ and $-\hdiv$ respectively. Note that since $X$ leaves invariant the volume form of the Sasaki metric we have $X^*=-X$ for both actions of $X$ on $C^{\infty}(SM)$ and $\mathcal Z$.
In what follows, we will need to work with the complexified version of $N$ with its natural inherited Hermitian product. This will be clear from the context and we shall employ the same letter $N$ to denote the complexified
bundle and also $\mathcal Z$ for its sections.

Let $R(x,v):\{v\}^{\perp}\to \{v\}^{\perp}$ be the operator determined by the Riemann curvature tensor by $R(x,v)w=R_{x}(w,v)v$, and let $d=\dim(M)$. \\[-5pt]

\noindent {\bf Spherical harmonics decomposition.}
Recall the spherical harmonics decomposition with respect to the vertical Laplacian $\Delta = \vdiv \vd$ (cf.\ \cite[Section 3]{PSU_hd}):
$$
L^2(SM,\C^n) = \bigoplus_{m=0}^{\infty} H_m(SM,\C^n),
$$
so that any $f \in L^2(SM,\C^n)$ has the orthogonal decomposition 
$$
f = \sum_{m=0}^{\infty} f_m.
$$
We write $\Omega_m = H_m(SM,\C^n) \cap C^{\infty}(SM,\C^n)$. Then $-\Delta u = \lambda_m u$ for $u \in \Omega_m$, where we set 
\[
\lambda_{m}:=m(m+d-2).
\]

\noindent {\bf Decomposition of $X$.} The geodesic vector field behaves nicely with respect to the decomposition into fibrewise
spherical harmonics: it maps $\Omega_{m}$ into $\Omega_{m-1}\oplus\Omega_{m+1}$ \cite[Proposition 3.2]{GK2}. Hence on $\Omega_{m}$ we can write
\[X=X_{-}+X_{+}\] 
where $X_{-}:\Omega_{m}\to \Omega_{m-1}$ and $X_{+}:\Omega_{m}\to\Omega_{m+1}$.
By \cite[Proposition 3.7]{GK2} the operator $X_{+}$ is overdetermined elliptic (i.e.\ it has injective principal symbol). One can gain insight into the decomposition $X=X_{-}+X_{+}$ as follows. Fix $x\in M$ and consider local coordinates which are geodesic at $x$ (i.e. all Christoffel symbols vanish at $x$).
Then $Xu(x,v)=v^{i}\frac{\partial u}{\partial x^{i}}$. We now use the following basic fact about spherical harmonics: the product of a spherical harmonic of degree $m$ with a spherical harmonic of degree one decomposes as the sum of spherical harmonics of degree $m-1$ and $m+1$.

\section{Useful analogies} \label{sec_analogies}

\noindent {\bf Analogies with elliptic operators.} It is instructive to compare our approach to X-ray transforms based on energy methods and related well known energy methods for elliptic operators in $\mR^n$. Let $\Omega \subset \mR^n$ be a bounded domain, and let $P = -\Delta + V$ be the Schr\"odinger operator in $\Omega$ where $V \in L^{\infty}(\Omega)$. We consider the uniqueness problem for solutions of the equation 
\begin{equation} \label{intro_schrodinger_problem}
Pu = 0 \text{ in $\Omega$}, \qquad u|_{\partial \Omega} = 0.
\end{equation}

It is well known that under a positivity condition for the potential $V$, any solution $u$ of \eqref{intro_schrodinger_problem} in $C^2(\overline{\Omega})$ (say) must be zero. In fact, if we assume that $V \geq 0$, this follows from the simple energy estimate where we integrate the equation $Pu = 0$ against the test function $u$. This implies that, in terms of $L^2(\Omega)$ inner products, 
\[
0 = (Pu, u) = (-\Delta u, u) + (Vu, u) \geq \norm{\nabla u}^2.
\]
Here we integrated by parts using that $u|_{\partial \Omega} = 0$, and used that $V \geq 0$. Thus $\nabla u = 0$, showing that $u$ is constant, and the boundary condition $u|_{\partial \Omega} = 0$ implies that $u \equiv 0$.

Uniqueness for solutions of \eqref{intro_schrodinger_problem} still holds under the weaker condition $V > -\lambda_1$, where $\lambda_1 > 0$ is the first Dirichlet eigenvalue of $-\Delta$ in $\Omega$. Then the Poincar\'e inequality implies that $\norm{\nabla w}^2 \geq \lambda_1 \norm{w}^2$ whenever $w|_{\partial \Omega} = 0$. The same argument as above shows that 
\[
0 = (Pu, u) = (-\Delta u, u) + (Vu, u) = \norm{\nabla u}^2 + (Vu, u) \geq ( (\lambda_1 + V) u, u).
\]
If $\lambda_1 + V \geq c$ a.e.\ in $\Omega$ for some $c > 0$, it follows that $u \equiv 0$. (Combining this argument with the unique continuation principle, it would be enough to assume that $\lambda_1+V|_U > 0$ in some set $U$ of positive measure.)

If the potential $V$ is very negative, uniqueness for solutions of \eqref{intro_schrodinger_problem} may fail. For example, if $V = -\lambda_1$ where $\lambda_1$ is the first Dirichlet eigenvalue, then the corresponding Dirichlet eigenfunction is a nontrivial solution of \eqref{intro_schrodinger_problem}. However, uniqueness will be true if we assume more vanishing. For instance, any $u \in C^2(\overline{\Omega})$ satisfying 
\begin{equation} \label{intro_schrodinger_problem2}
Pu = 0 \text{ in $\Omega$}, \qquad u|_{\partial \Omega} = \partial_{\nu} u|_{\Omega} = 0
\end{equation}
must be identically zero. This follows from the unique continuation principle, which is typically established by using Carleman estimates (i.e.\ exponentially weighted $L^2$ estimates for $P$).

Carleman estimates themselves correspond to a version of energy methods (more precisely, positive commutator methods), which makes use of the gauge invariance of the problem: writing $u = e^{\varphi} w$ for some $\varphi \in C^2(\overline{\Omega})$, the problem \eqref{intro_schrodinger_problem2} is equivalent with 
\[
P_{\varphi} w = 0 \text{ in $\Omega$}, \qquad w|_{\partial \Omega} = \partial_{\nu} w|_{\Omega} = 0
\]
where $P_{\varphi} = e^{-\varphi} P e^{\varphi}$ is the operator $P$ conjugated with an exponential weight. For some choices of $\varphi$ (for which the commutator $[P_{\varphi}^*, P_{\varphi}]$ is positive), the operator $P_{\varphi}$ is ''more positive'' than $P$, and an energy estimate for $P_{\varphi} w = 0$ (where one integrates against the test function $P_{\varphi} w$) implies that $w \equiv 0$.

Let us now return from the case of the Schr\"odinger operator back to X-ray transforms. As explained in \cite{PSU1, PSU3}, in this case we are considering the operator $P = \vd X$ on $SM$, and we wish to show (say) that any $u \in C^{\infty}(SM)$ satisfying 
\[
Pu = 0 \text{ in $SM$}, \qquad u|_{\partial(SM)} = 0
\]
must be identically zero. The basic energy identity for the operator $P$ is \eqref{eq:pestov}, where
the operator $-X^2 - R$ formally corresponds to the operator $-\Delta + V$ in the above examples. In particular, $-R$ (curvature) plays a similar role as $V$ (potential). For instance, the condition $K \leq 0$ where $K$ denotes sectional curvature corresponds to $V \geq 0$. Moreover, the simple/Anosov condition for $(M,g)$ ensures that $-X^2 - R > 0$ which corresponds to $V > -\lambda_1$ (i.e.\ $-\Delta + V > 0$).

Let us formulate these analogies in the following table:

\vspace{10pt}

{\begin{center}
\begin{tabular}{|c|c|}
\hline
$P = -\Delta + V$ & $P = \vd X$ \\[2pt]
\hline 
potential $V$ & $\phantom{\vd}$curvature $-R$$\phantom{\vd}$ \\[3pt]
$V \geq 0$ & $K \leq 0$ \\[3pt]
$V > -\lambda_1$ & simple/Anosov \\[3pt]
Carleman estimates & \hspace{50pt}?\hspace{50pt} \\[5pt]
\hline
\end{tabular}
\end{center}}

An analogue of the Carleman estimates approach for uniqueness in elliptic equations has so far been missing in the case of X-ray transforms. Theorem \ref{thm_carleman_negativecurvature_intro} represents the first progress in this direction. \\[-5pt]

\noindent {\bf Properties of $P$.} Let us consider in more detail the operator $P = \vd X$ on $SM$ that is fundamental in the energy method for geodesic X-ray transforms. The operator $P$ is a second order differential operator, scalar if $d=2$ and vector-valued if $d \geq 3$, on the compact $(2d-1)$-dimensional manifold $SM$ with boundary. However, even in the case $d=2$ where $P = VX$ is the product of two vector fields, the equation $Pu = 0$ does not seem to fall into any known class of PDEs for which there would be a uniqueness theory.

One major issue is that $P$ is not of principal type. To see this, let $d = 2$ and write $P = P_1 P_2$ where $P_1 = V$ and $P_2 = X$. Then $P_1$ and $P_2$ are smooth vector fields on $SM$. Following \cite[Section 2]{PSU1} (and writing $x_3 = \theta$) we consider local coordinates $(x_1,x_2,x_3)$ on $SM$ so that $g = e^{2\lambda(x_1,x_2)} (dx_1^2 + dx_2^2)$. In this notation, the principal symbols of $P_1$ and $P_2$ are 
\[
p_1 = \xi_3, \qquad p_2 = e^{-\lambda} ( \cos(x_3) \xi_1 + \sin(x_3) \xi_2 + [-\partial_{x_1} \lambda \sin(x_3) + \partial_{x_2} \lambda \cos(x_3)] \xi_3).
\]
Thus $P$ has real principal symbol $p = p_1 p_2$, and the characteristic set $p^{-1}(0)$ is the union of $p_1^{-1}(0)$ and $p_2^{-1}(0)$. However, the intersection 
\[
p_1^{-1}(0) \cap p_2^{-1}(0) = \{ (x, \xi) \in T^*(SM) \,;\, \xi_3 = 0, \ \cos(x_3) \xi_1 + \sin(x_3) \xi_2 = 0 \}
\]
is nontrivial and in this set $dp = p_1 \,dp_2 + p_2 \,dp_1$ vanishes. This means that any null bicharacteristic curve through $p_1^{-1}(0) \cap p_2^{-1}(0)$ reduces to a point, and in particular $P$ is not of principal type (see \cite[Chapter XXVI]{Hormander} for more on principal type operators).

The fact that $P$ is not of principal type means that its properties may depend on lower order terms. Indeed, if $d=2$ one can find first order operators $W$ on $SM$ so that $P + W$ has nontrivial compactly supported solutions. To see this, note that if $W = X_{\perp}$ then $P + W = XV$ (see \cite{PSU1}), and any $u \in C^{\infty}_c(M^{\mathrm{int}})$ satisfies $XVu = 0$ in $SM$. Moreover, the counterexample in \cite{Boman} implies that one may even find such an operator $W$ arising from a weighted X-ray transform: when $(M,g)$ is the Euclidean unit disc, there is $\mathcal{A} \in C^{\infty}(SM)$ such that $V(X+\mathcal{A}) u = 0$ for some nontrivial $u \in C^{\infty}_c(SM^{\mathrm{int}})$. These observations indicate that the structure of lower order terms is crucial for the uniqueness problem, and principal symbol computations will not be sufficient.

However, in spite of the above issues it is possible to obtain uniqueness results for $P$ via energy methods. Based on the Pestov energy identity, in \cite{PSU_hd} it was proved that if $(M,g)$ is a compact simple/Anosov manifold, one has the inequality 
\[
\norm{u - (u)_{SM}}_{H^1(SM)} \leq C \norm{Pu}_{L^2(SM)}
\]
valid for all $u \in H^2(SM)$ (with $u|_{\partial(SM)} = 0$ in the boundary case). Here $(u)_{SM} = \frac{1}{\text{Vol}(SM)} \int_{SM} u$. Such an inequality immediately implies that if $Pu = 0$ in $SM$ and $u|_{\partial(SM)} = 0$, then $u \equiv 0$.

Various generalizations of the geodesic X-ray transform lead to estimates of the form 
\begin{equation} \label{pestov_estimate_error_term}
\norm{u - (u)_{SM}}_{H^1(SM)} \leq C \norm{Pu}_{L^2(SM)} + \text{error term}
\end{equation}
where, roughly, 
\[
\begin{array}{ll}
\left\{ \begin{array}{c} \text{unitary connections} \\ \text{higher order tensors} \end{array} \right. \rightsquigarrow \text{$H^{1/2}$ error}, \\[15pt]
\left\{ \begin{array}{c} \text{general connections} \\ \text{curvature} \\ \text{spatial localization} \end{array} \right. \rightsquigarrow \text{$H^1$ error}. 
\end{array}
\]
To justify these statements, note that a unitary connection $A$ introduces the term $-(F_A u, \vd u)$ in the Pestov identity (cf.\ Lemma \ref{lemma_pestov_general_connection} below), and that when $d=2$ the argument leading to \cite[equation (6)]{PSU1} shows that higher order tensors introduce a term of the form $\sum_{k=-\infty}^{\infty} |k| (K v_k, v_k)$ (this extends to any dimension), where $K$ is the Gaussian curvature and the expansion is taken with respect to the vertical Fourier decomposition of functions in the unit circle bundle. Both cases correspond to $H^{1/2}$ error terms in \eqref{pestov_estimate_error_term}. However, a general (non-unitary) connection $A$ can be introduced to the Pestov identity by writing $P$ as $P + \vd A - \vd A$ and using the triangle inequality, leading to an $H^1$ error term $\norm{\vd(Au)}$. The curvature term $-(R \vd u, \vd u)$ in the Pestov identity is also of $H^1$ type, and spatial localization corresponds to commuting $P$ with a spatial cutoff function which again introduces an $H^1$ error.

The above statements indicate that unitary connections and higher order tensors lead to $H^{1/2}$ errors in the energy estimate \eqref{pestov_estimate_error_term}. This is a weaker norm than the $H^1$ norm appearing on the left hand side of \eqref{pestov_estimate_error_term}, which suggests that such perturbations might be manageable (indeed, the high frequency components in the error term can be absorbed, which in particular implies that there is a finite dimensional kernel). However, general connections, curvature or spatial localization correspond to $H^1$ error terms which are as strong as the $H^1$ term on the left, suggesting that these perturbations may be challenging to handle by energy methods (it seems that so far only the method in \cite{UhlmannVasy} can really deal with such perturbations). In this article, we introduce energy methods that are able to deal with $H^1$ perturbations arising from general connections. \\[-5pt]

\section{Pestov identity with general connection} \label{section_pestov_general_connection}

In this section we will prove a version of the Pestov identity that involves a general connection, extending the version for unitary connections proved in \cite{GPSU}. We also give an equivalent version stated in terms of the $X_{\pm}$ operators.

We emphasize that for the applications we will eventually only use the identity for $A=0$. However, the general setup here highlights the fact that dealing with general connections results in a lack of symmetry in the energy estimates. This lack of symmetry indicates that general connections are indeed stronger perturbations than unitary connections, as explained in Section \ref{sec_analogies}, and it forces one to use other methods such as the Carleman estimates in this article. For simplicity of presentation we will work on the trivial bundle $M \times \mC^n$, but straightforward modifications would lead to analogous results on general Hermitian bundles as in \cite{GPSU}. We will use the notation from Section \ref{sec_preliminaries}.

Let $(M,g)$ be a compact manifold with or without boundary, with $d = \dim(M)$. Let $A$ be an $n \times n$ matrix of smooth complex $1$-forms on $M$, or equivalently a smooth function $A: SM \to \mC^{n \times n}$ so that $A(x,v)$ is linear in $v$. Then $A$ defines a connection on the trivial bundle $M \times \mC^n$. We define the following operators on $C^{\infty}(SM, \mC^n)$, 
\begin{align*}
X^A &:= X + A, \\
\hd_A &:= \hd + (\vd A).
\end{align*}
Here $A$ and $(\vd A)$ act by multiplication. The horizontal divergence $\hdiv_A$ is defined for $Z \in \mathcal{Z}^n$ by 
\[
\hdiv_A \,Z := \hdiv \,Z + \langle \vd A, Z \rangle.
\]
Finally, we define $F_A\in C^{\infty}(SM,N\otimes \mC^{n\times n})$ by 
\[
F_A := X(\vd A) - \hd A + [A, \vd A].
\]
The element $F_{A}$ acts on functions $u\in C^{\infty}(SM,\C^n)$ by matrix multiplication and thus it induces and operator of order zero.
In the natural $L^2$ inner product, one has 
\[
(X^A)^* = -X^{-A^*}, \qquad (\hd_A)^* = -\hdiv_{-A^*}.
\]
Note that if $A$ is a unitary connection, meaning that $A^* = -A$, then of course $(X^A)^* = -X^A$, $(\hd_A)^* = -\hdiv_{A}$, and $F_A$ is the curvature operator in \cite{GPSU}.

With the above conventions, one can check by direct computations that the basic commutator formulas given in \cite{PSU_hd} for $A=0$ and in \cite{GPSU} for unitary $A$ remain true for a general connection $A$.
Recall that $R$ is the curvature operator defined before and acts on $C^{\infty}(SM,N\otimes \C^n)$ as $R\id_{n\times n}$.

\begin{Lemma} \label{lemma_basic_commutator_identities}
One has the following commutator identities on $C^{\infty}(SM,\mC^n)$, 
\begin{align*}
[X^A, \vd] &= -\hd_A, \\
[X^A, \hd_A] &= R \vd + F_A, \\
\hdiv_A \vd - \vdiv \hd_A &= (d-1) X^A,
\end{align*}
and by duality one gets the following identity on $\mathcal{Z}^n$,
\[
[X^A, \vdiv] = -\hdiv_A.
\]
\end{Lemma}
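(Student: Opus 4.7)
My plan is to reduce each identity to its $A = 0$ counterpart, which is proved in \cite{PSU_hd}, by expanding the bracket into the known piece plus correction terms involving $A$, $\vd A$ or $\hd A$. The corrections will be handled by Leibniz rules for $X$, $\vd$, $\hd$, $\vdiv$ against multiplication operators. Since the definitions of $X^A$, $\hd_A$, $\hdiv_A$ and $F_A$ are linear in the $A$-dependent pieces, this reduction should work identity by identity.

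For the first identity I would split $[X^A, \vd] = [X, \vd] + [A, \vd]$; the first bracket equals $-\hd$ by the known formula, and the Leibniz rule $\vd(Au) = (\vd A) u + A\,\vd u$ gives $[A, \vd]u = -(\vd A) u$, so the sum is $-\hd_A$ by definition. For the second identity I would expand
\[
[X^A, \hd_A] = [X, \hd] + [X, \vd A] + [A, \hd] + [A, \vd A]
\]
and insert $[X, \hd] = R \vd$ together with the Leibniz consequences $[X, \vd A] = X(\vd A)$ (viewed as a multiplication operator on $\mC^n$-valued functions, using that $X$ acts on $\mathcal{Z}$ by covariant differentiation) and $[A, \hd] = -\hd A$. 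Collecting the four pieces reproduces $R\vd + F_A$ exactly by the definition of $F_A$.

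For the third identity I would use $\hdiv\,\vd - \vdiv\,\hd = (d-1) X$, so that the remaining work is to evaluate
\[
\langle \vd A, \vd u\rangle - \vdiv\bigl((\vd A) u\bigr).
\]
A Leibniz rule for the vertical divergence gives $\vdiv((\vd A) u) = \langle \vd A, \vd u\rangle + (\Delta A) u$, where $\Delta = \vdiv\,\vd$ is the vertical Laplacian. The key observation, and the only point that is not purely formal bookkeeping, is that $A(x, v)$ is linear in $v$ and therefore belongs fibrewise to $\Omega_1$, so $-\Delta A = \lambda_1 A = (d-1) A$. The leftover term is then exactly $(d-1) A u$, which combines with $(d-1) X u$ into $(d-1) X^A u$. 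The dual identity $[X^A, \vdiv] = -\hdiv_A$ is then a direct computation of the same kind: expand, use $[X, \vdiv] = -\hdiv$, and apply $\vdiv(AZ) = A\,\vdiv Z + \langle \vd A, Z\rangle$ for $Z \in \mathcal{Z}^n$.

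I expect the main obstacle, or rather the only genuinely nontrivial input, to be the identification $-\Delta A = (d-1) A$ used in the third identity. This encodes the fact that connection $1$-forms on $M$ correspond precisely to fibrewise spherical harmonics of degree one on $SM$, and it is what makes the two correction terms conspire to rebuild the full operator $X^A$ on the right-hand side. Everything else should be a careful but mechanical extension of the known $A = 0$ and unitary cases; the one bookkeeping subtlety to keep track of is that, absent the skew-Hermiticity $A^* = -A$, one must use the adjoint formulas $(X^A)^* = -X^{-A^*}$ and $(\hd_A)^* = -\hdiv_{-A^*}$ recorded in the excerpt rather than plain skew-adjointness.
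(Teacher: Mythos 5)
Your proof is correct and matches the paper's approach: the paper simply states that the identities follow ``by direct computations'' from the $A=0$ case, which is exactly the Leibniz-rule expansion you carry out. You rightly single out the one non-formal input, namely that $A \in \Omega_1$ gives $-\vdiv\vd A = (d-1)A$, which is precisely what turns the leftover term in the third identity into $(d-1)Au$ and rebuilds $(d-1)X^A$.
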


The paper \cite{GPSU} gave a version of the Pestov identity with unitary connection in any dimension. If the connection is not unitary, one loses symmetry in the Pestov identity but the following form of this identity remains valid.

\begin{Lemma} \label{lemma_pestov_general_connection}
If $A$ is a general connection, one has the identity 
\begin{multline*}
(\vd X^A u, \vd X^{-A^*} u) \\
 = (X^A \vd u, X^{-A^*} \vd u) - (R \vd u, \vd u) - (F_A u, \vd u) + (d-1) (X^A u, X^{-A^*} u)
\end{multline*}
for any $u \in C^{\infty}(SM, \mC^n)$ with $u|_{\partial(SM)} = 0$ in the boundary case.
\end{Lemma}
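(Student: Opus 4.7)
The plan is to extend the derivation of the Pestov identity for unitary connections (as in \cite{GPSU}) by tracking $A$ and $-A^*$ as separate objects throughout, since the loss of skew-symmetry means that $X^A$ is no longer its own negative adjoint. The starting point is the first commutator identity from Lemma \ref{lemma_basic_commutator_identities}, which (together with its analogue under $A \mapsto -A^*$) gives
\[
\vd X^A u = X^A \vd u + \hd_A u, \qquad \vd X^{-A^*} u = X^{-A^*} \vd u + \hd_{-A^*} u.
\]
Expanding the left-hand side of the claimed identity with respect to these two decompositions produces four terms: the ``diagonal'' piece $(X^A \vd u, X^{-A^*} \vd u)$, which already matches the first term on the right-hand side; two mixed terms of the form $(X \vd u, \hd u)$; and a single pure-horizontal term $(\hd_A u, \hd_{-A^*} u)$.

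Next, I would integrate by parts on the first mixed term $(X^A \vd u, \hd_{-A^*} u)$, moving $X^A$ to the other factor via $(X^A)^* = -X^{-A^*}$. All boundary contributions vanish cleanly because $u|_{\partial(SM)} = 0$ forces $\vd u|_{\partial(SM)} = 0$ (the fibre over any boundary point is entirely contained in $\partial(SM)$, so $u$ is identically zero along it). Applying the second commutator identity $[X^{-A^*}, \hd_{-A^*}] = R\vd + F_{-A^*}$ then produces the curvature term $-(R\vd u, \vd u)$ and an $F$-type contribution. A second integration by parts using $(\hd_{-A^*})^* = -\hdiv_A$, followed by the divergence-type commutator $\hdiv_A \vd = \vdiv \hd_A + (d-1) X^A$ and the adjoint $\vdiv^* = -\vd$, extracts the factor $(d-1)(X^A u, X^{-A^*} u)$; re-expanding $\vd X^{-A^*}$ by the first commutator once more causes the residual terms to telescope exactly against the second mixed term $(\hd_A u, X^{-A^*} \vd u)$ and the pure-horizontal term $(\hd_A u, \hd_{-A^*} u)$.

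The step I expect to require the most care is matching the $F$-type term produced by this procedure, which arrives naturally in the form $-(\vd u, F_{-A^*} u)$, with the form $-(F_A u, \vd u)$ appearing in the statement. The reconciliation goes through the relation between $F_{-A^*}$ and $F_A^*$ obtained directly from the definition $F_A := X(\vd A) - \hd A + [A, \vd A]$, using that $A^*$ interacts with $\vd$, $\hd$, and the commutator bracket in the expected way, together with the pairing conventions on $\mathcal{Z}^n$. In the unitary case one has $-A^* = A$ and the two forms coincide trivially, which is why the identity of \cite{GPSU} looks fully symmetric; in the general case the asymmetric pairing of $X^A$ with $X^{-A^*}$ on the left-hand side is precisely what allows the right-hand side to remain expressible in terms of $F_A$ alone, and tracking this requires the careful bookkeeping described above.
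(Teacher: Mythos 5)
Your approach is genuinely different from the paper's, and the skeleton of your argument is sound, but it has one real gap at exactly the step you flag as delicate.

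The paper first moves both $\vd$'s and both first-order factors onto the $u$-slot, reducing the whole left-hand side minus the diagonal to a single operator expression $((X^A \vdiv \vd X^A - \vdiv X^A X^A \vd)u, u)$, and then does one commutator computation; this produces $F_A$ already sitting inside a $\vdiv$, so there is no $F$-term to reshuffle. Your route instead expands $(\vd X^A u, \vd X^{-A^*} u)$ into four pieces via $\vd X^A = X^A\vd + \hd_A$ and processes the cross terms by repeated integration by parts. I checked that your sequence (move $X^A$ off $T_1 = (X^A\vd u, \hd_{-A^*}u)$, apply $[X^{-A^*}, \hd_{-A^*}] = R\vd + F_{-A^*}$, pass through $\hdiv_A\vd = \vdiv\hd_A + (d-1)X^A$, and re-expand $\vd X^{-A^*}$) does telescope cleanly against $T_2$ and $T_3$, giving
\[
T_1 + T_2 + T_3 = (d-1)(X^A u, X^{-A^*} u) - (R\vd u, \vd u) - (\vd u, F_{-A^*} u).
\]
So far so good. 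The gap is in the final reconciliation $(\vd u, F_{-A^*} u) = (F_A u, \vd u)$. The algebraic identity $F_{-A^*} = -F_A^*$ (where $*$ is entrywise conjugate transpose), which is what you propose to use, is correct but is \emph{not} enough: chasing the Hermitian pairing through gives
\[
(F_A u, \vd u) - (\vd u, F_{-A^*} u) = -\int_{SM} \sum_{j,k} u_k\,\overline{u_j}\,\vdiv\big((F_A)_{jk}\big),
\]
so the equality you need is equivalent to the fact $\vdiv(F_A) = 0$, together with a Green's identity on the fibres $S_x M$. That divergence-free property does follow from the definition of $F_A$ and Lemma~\ref{lemma_basic_commutator_identities} (it is invoked in the paper's proof of Lemma~\ref{lemma_curvature_fourier_localization}), but it is a genuine additional input, not a ``pairing convention,'' and your sketch does not supply it. Without it, the $F$-term does not match and the argument is incomplete.

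Two ways to close the gap within your framework. (1) Prove $\vdiv(F_A) = 0$ separately from the commutator identities and use the Green's identity above. (2) Simpler and cleaner: start the integration by parts from the \emph{other} mixed term $T_2 = (\hd_A u, X^{-A^*}\vd u)$. Moving $X^{-A^*}$ across gives $T_2 = -(X^A \hd_A u, \vd u)$, and applying $[X^A, \hd_A] = R\vd + F_A$ (with $A$, not $-A^*$) produces $-(F_A u, \vd u)$ directly; the remaining term $-(\hd_A X^A u, \vd u)$ then processes through $(\hd_A)^* = -\hdiv_{-A^*}$, $\hdiv_{-A^*}\vd = \vdiv\hd_{-A^*} + (d-1)X^{-A^*}$, and $\vd X^A = X^A\vd + \hd_A$ to telescope against $T_1$ and $T_3$, with no $F_{-A^*}$ appearing at all. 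This re-ordering realizes your plan without needing the divergence-free fact.

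Your remark about boundary terms is correct: $\vd u|_{\partial(SM)} = 0$ because fibres over boundary points lie in $\partial(SM)$, and the vertical operators $\vd, \vdiv$ only differentiate tangentially to $\partial(SM)$, so all boundary contributions vanish.
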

\begin{proof}
Observe that 
\[
(\vd X^A u, \vd X^{-A^*} u) - (X^A \vd u, X^{-A^*} \vd u) = ( (X^A \vdiv \vd X^A - \vdiv X^A X^A \vd) u, u).
\]
The commutator identities above imply that 
\[
X^A \vdiv \vd X^A - \vdiv X^A X^A \vd = -(d-1)X^A X^A + \vdiv (R \vd + F_A).
\]
The result follows.
\end{proof}

If $A$ is a general connection, the same argument as in Section \ref{sec_preliminaries} shows that $X^{A}$ maps $\Omega_m$ to $\Omega_{m-1} \oplus \Omega_{m+1}$. Thus we have the decomposition 
\[
X^A = X^A_+ + X^A_-, \qquad X^A_{\pm}: \Omega_m \to \Omega_{m \pm 1}.
\]
We will next rewrite the Pestov identity in terms of $X^A_+$ and $X^A_-$. To do this, we need some notation: for a polynomially bounded sequence $\alpha = (\alpha_\l)_{\l=0}^{\infty}$ of real numbers, we define a corresponding ``inner product'' 
\[
(u,w)_{\alpha} = \sum_{\l=0}^{\infty} \alpha_\l (u_\l, w_\l)_{L^2(SM)}, \qquad u, w \in C^{\infty}(SM, \mC^n).
\]
We also write $\norm{u}_\alpha^2 = \sum_{\l=0}^{\infty} \alpha_\l \norm{u_\l}^2$. (If each $\alpha_\l$ is positive one gets an actual inner product and norm, but it is notationally convenient to allow zero or negative $\alpha_\l$.)

The Pestov identity can then be written in the following form.

\begin{Lemma} \label{lemma_pestov_xplusminus_generalconnection}
If $u \in C^{\infty}(SM,\C^n)$ with $u|_{\partial(SM)} = 0$ in the boundary case, then 
\[
(X^A_- u, X^{-A^*}_- u)_{\alpha} - (R \vd u, \vd u) - (F_A u, \vd u) + (Z_A(u), Z_{-A^*}(u)) = (X^A_+ u, X^{-A^*}_+ u)_{\beta}
\]
where $Z_A(u)$ is the $\vdiv$-free part of $\hd_A u$, and 
\begin{align*}
\alpha_\l &= \left\{ \begin{array}{cl} d-1, & \l = 0, \\ (2\l+d-2) \left(1+\frac{1}{\l+d-2} \right), & \l \geq 1, \end{array} \right. \\
\beta_\l &= \left\{ \begin{array}{cl} 0, & \l=0,1, \\ (2\l+d-2) \left(1-\frac{1}{\l} \right), & \l \geq 2. \end{array} \right.
\end{align*}
\end{Lemma}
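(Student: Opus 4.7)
The plan is to reduce the identity in Lemma~\ref{lemma_pestov_general_connection} to a frequency-localized form by systematically expanding each of its five terms in the vertical spherical harmonic decomposition $u = \sum_l u_l$ together with $X^A u_l = X^A_+ u_l + X^A_- u_l$ (so that $X^A_\pm u_l \in \Omega_{l \pm 1}$), and the analogous decomposition for $X^{-A^*}$. Two of the terms are easy: the identity $(\vd v, \vd w) = \sum_k \lambda_k (v_k, w_k)$, with $\lambda_k = k(k+d-2)$, combined with $(X^A u)_k = X^A_+ u_{k-1} + X^A_- u_{k+1}$, immediately expands both $(\vd X^A u, \vd X^{-A^*} u)$ and $(d-1)(X^A u, X^{-A^*} u)$ as $\lambda$-weighted sums of Fourier pairings.

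The crucial term is $(X^A \vd u, X^{-A^*} \vd u)$, which I would handle by invoking the commutator identity $[X^A, \vd] = -\hd_A$ of Lemma~\ref{lemma_basic_commutator_identities} to write $X^A \vd u = \vd X^A u - \hd_A u$. This yields
\[
(X^A \vd u, X^{-A^*} \vd u) = (\vd X^A u, \vd X^{-A^*} u) - (\vd X^A u, \hd_{-A^*} u) - (\hd_A u, \vd X^{-A^*} u) + (\hd_A u, \hd_{-A^*} u).
\]
Next I would use the fibrewise orthogonal decomposition of any section of $N$ into a vertical gradient plus a vertically divergence-free part. Applied to $\hd_A u$, this is the splitting $\hd_A u = \vd \phi_A + Z_A(u)$, where the potential $\phi_A$ (modulo fibrewise constants) is determined by $-\Delta \phi_A = -\vdiv \hd_A u = (d-1) X^A u - \hdiv_A \vd u$, using the commutator $\hdiv_A \vd - \vdiv \hd_A = (d-1) X^A$ from Lemma~\ref{lemma_basic_commutator_identities}.

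Orthogonality of gradients and divergence-free sections then gives $(\hd_A u, \hd_{-A^*} u) = (\vd \phi_A, \vd \phi_{-A^*}) + (Z_A(u), Z_{-A^*}(u))$ and $(\vd X^A u, \hd_{-A^*} u) = (\vd X^A u, \vd \phi_{-A^*})$ (and symmetrically), so the $(Z_A(u), Z_{-A^*}(u))$ summand appears directly as in the claim. Everything else can be made explicit once one knows the Fourier expansion of $\phi_A$: since $(-\Delta)^{-1}$ acts on the $k$-th component by multiplication by $1/\lambda_k$, the Fourier coefficient $(\phi_A)_k$ equals $\lambda_k^{-1} \bigl( (d-1) X^A u - \hdiv_A \vd u \bigr)_k$ for $k \geq 1$. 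The denominators $\lambda_k$ are exactly what produce the fractional factors $1/(l+d-2)$ and $1/l$ present in $\alpha_l$ and $\beta_l$.

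Collecting contributions and rearranging using $\lambda_{l+1} = (l+1)(l+d-1)$ and $\lambda_{l-1} = (l-1)(l+d-3)$, the coefficient of $(X^A_+ u_l, X^{-A^*}_+ u_l)$ on the right-hand side should sum to $\beta_{l+1} = (2l+d)\,l/(l+1)$ (vanishing when $l=0$), and that of $(X^A_- u_l, X^{-A^*}_- u_l)$ on the left-hand side to $\alpha_{l-1}$ (with the edge value $\alpha_0 = d-1$ at $l=1$); the terms $(R \vd u, \vd u)$ and $(F_A u, \vd u)$ are preserved intact. The main obstacle is the bookkeeping of cross terms of the form $(X^A_+ u_l, X^{-A^*}_- u_{l+2})$, which appear both in $(\vd X^A u, \vd X^{-A^*} u)$ and $(X^A u, X^{-A^*} u)$, and also via $\vd \phi_{\pm A^{(*)}}$ in the inner products from Step~2. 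These cross terms must cancel exactly, since the claimed identity contains only diagonal pairings $(X^A_\pm u_l, X^{-A^*}_\pm u_l)$; the cancellation is essentially dictated by the fact that $\phi_A$ is the unique vertical antidivergence of $\hd_A u$, so that the cross contributions generated by $\vd \phi_{-A^*}$ are tailored to kill those coming from the other two sources. Verifying this cancellation while simultaneously extracting the precise rational coefficients $\alpha_l$, $\beta_l$ is the delicate algebraic core of the argument.
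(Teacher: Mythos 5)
Your overall strategy matches the paper's proof quite closely: both start from Lemma~\ref{lemma_pestov_general_connection}, use the commutator $[X^A, \vd] = -\hd_A$ to replace $X^A \vd u$ by $\vd X^A u - \hd_A u$, decompose $\hd_A u$ into a vertical gradient part plus the $\vdiv$-free piece $Z_A(u)$, and expand everything in vertical spherical harmonics. The only substantive presentational difference is that the paper cites the explicit formula $(\phi_A)_l = \tfrac{1}{l} X^A_+ u_{l-1} - \tfrac{1}{l+d-2} X^A_- u_{l+1}$ from \cite[Lemma~4.4]{PSU_hd}, whereas you recover $\phi_A$ implicitly via $(-\Delta)^{-1}$ applied to $-\vdiv \hd_A u = (d-1)X^A u - \hdiv_A \vd u$; these are equivalent once one works out the Fourier coefficients of $\hdiv_A \vd u$, and your route is correct in spirit.

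However, there is a genuine gap in how you handle the cross terms $(X^A_+ u_{l-1}, X^{-A^*}_- u_{l+1})$ and $(X^A_- u_{l+1}, X^{-A^*}_+ u_{l-1})$. You assert these ``must cancel exactly, since the claimed identity contains only diagonal pairings'' --- but that is circular, as it is precisely the statement under proof. Your proposed explanation, that the cancellation is ``essentially dictated by the fact that $\phi_A$ is the unique vertical antidivergence of $\hd_A u$,'' is not a valid argument: uniqueness of $\phi_A$ tells you nothing about how the cross-frequency contributions from $(X^A \vd u, X^{-A^*}\vd u)$ compare with those from $(\vd X^A u, \vd X^{-A^*} u) - (d-1)(X^A u, X^{-A^*} u)$. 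In the paper's computation the cross terms on the two sides of the Pestov identity carry coefficients $\lambda_l\bigl(1-\tfrac{1}{l}\bigr)\bigl(1+\tfrac{1}{l+d-2}\bigr)$ and $\lambda_l-(d-1)$ respectively, and their equality is a concrete algebraic identity (the paper even flags it as ``somewhat miraculous''). Without actually verifying $\lambda_l\bigl(1-\tfrac{1}{l}\bigr)\bigl(1+\tfrac{1}{l+d-2}\bigr) = \lambda_l-(d-1)$, the cross terms have not been shown to cancel, and the coefficients $\alpha_l$, $\beta_l$ do not emerge. This is exactly the ``delicate algebraic core'' you acknowledge deferring --- but it is the proof, and the heuristic offered in its place is incorrect.
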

\begin{proof} Note that $u\in C^{\infty}(SM,\C^n)$, implies rapid decay, namely, $u_{\l}=O((1+|\l|)^{-\infty})$.
We will use Lemma \ref{lemma_pestov_general_connection} in the form 
\[
(\vd X^A u, \vd X^{-A^*} u) - (d-1) (X^A u, X^{-A^*} u) = (X^A \vd u, X^{-A^*} \vd u) - (R \vd u, \vd u) - (F_A u, \vd u).
\]
Note that Lemma \ref{lemma_basic_commutator_identities} gives
\begin{equation} \label{xavdu_decomposition}
X^A \vd u = \vd X^A u - \hd_A u.
\end{equation}
We also have the commutator formula $[X^A, -\vdiv \vd] = 2 \vdiv \hd_A + (d-1)X^A$ by the identities in Lemma \ref{lemma_basic_commutator_identities} (compare with \cite[Lemma 3.5]{PSU_hd}). Thus we obtain as in \cite[Lemma 4.4]{PSU_hd} that 
\begin{equation} \label{hdau_decomposition}
\hd_A u = \vd \left[ \sum_{\l=1}^{\infty} \left( \frac{1}{\l} X^A_+ u_{\l-1} - \frac{1}{\l+d-2} X^A_- u_{\l+1} \right) \right] + Z_A(u)
\end{equation}
where $Z_A(u) \in \mathcal Z^n$ satisfies $\vdiv \,Z_A(u) = 0$. Thus \eqref{xavdu_decomposition} and \eqref{hdau_decomposition} yield that 
\[
X^A \vd u = \vd \sum_{\l=1}^{\infty} \left[ \left(1-\frac{1}{\l} \right) X^A_+ u_{\l-1} + \left(1+\frac{1}{\l+d-2} \right) X^A_- u_{\l+1} \right] - Z_A(u).
\]
Applying this for $A$ and $-A^*$ gives (recall that $\lambda_{\l}=\l(\l+d-2)$)
{\scriptsize 
\begin{align*}
 &(X^A \vd u, X^{-A^*} \vd u)  \\
 & = \sum_{\l=1}^{\infty} \lambda_\l \left( \left(1-\frac{1}{\l} \right) X^A_+ u_{\l-1} + \left(1+\frac{1}{\l+d-2} \right) X^A_- u_{\l+1},  \left(1-\frac{1}{\l} \right) X^{-A^*}_+ u_{\l-1} + \left(1+\frac{1}{\l+d-2} \right) X^{-A^*}_- u_{\l+1} \right) \\
 &\qquad + (Z_A(u), Z_{-A^*}(u)) \\
 &= \sum_{\l=1}^{\infty} \lambda_\l \left[ \left(1-\frac{1}{\l} \right)^2 (X^A_+ u_{\l-1}, X^{-A^*}_+ u_{\l-1}) + \left(1+\frac{1}{\l+d-2} \right)^2 (X^A_- u_{\l+1}, X^{-A^*}_- u_{\l+1}) \right] \\
 & \qquad + \sum_{\l=1}^{\infty} \lambda_\l \left(1-\frac{1}{\l} \right) \left(1+\frac{1}{\l+d-2} \right) \left[ (X^A_+ u_{\l-1}, X^{-A^*}_- u_{\l+1}) + (X^A_- u_{\l+1}, X^{-A^*}_+ u_{\l-1}) \right] + (Z_A(u), Z_{-A^*}(u)).
\end{align*}
}
On the other hand, one has 
\begin{align*}
 &(\vd X^A u, \vd X^{-A^*} u) - (d-1) (X^A u, X^{-A^*} u) \\
 &= -(d-1) (X^A_- u_1, X^{-A^*}_- u_1) + \sum_{\l=1}^{\infty} (\lambda_l - (d-1)) (X^A_+ u_{\l-1} + X^A_- u_{\l+1}, X^{-A^*}_+ u_{\l-1} + X^{-A^*}_- u_{\l+1}) \\
 &= -(d-1) (X^A_- u_1, X^{-A^*}_- u_1) + \sum_{\l=1}^{\infty} (\lambda_\l - (d-1)) \left[ (X^A_+ u_{\l-1}, X^{-A^*}_+ u_{\l-1} ) + (X^A_- u_{\l+1}, X^{-A^*}_- u_{\l+1}) \right] \\
 &\qquad + \sum_{\l=1}^{\infty} (\lambda_\l - (d-1)) \left[ (X^A_+ u_{\l-1}, X^{-A^*}_- u_{\l+1}) + (X^A_- u_{\l+1}, X^{-A^*}_+ u_{\l-1}) \right].
\end{align*}
Somewhat miraculously, we observe that 
\[
\lambda_\l \left(1-\frac{1}{\l} \right) \left(1+\frac{1}{\l+d-2} \right) = \lambda_\l - (d-1).
\]
This means that the two sums above involving $\left[ (X^A_+ u_{\l-1}, X^{-A^*}_- u_{\l+1}) + (X^A_- u_{\l+1}, X^{-A^*}_+ u_{\l-1}) \right]$ terms are equal. The Pestov identity in the beginning of the proof now yields 
\begin{multline*}
-(d-1) (X^A_- u_1, X^{-A^*}_- u_1) + \sum_{\l=1}^{\infty} (\lambda_\l - (d-1)) \left[ (X^A_+ u_{\l-1}, X^{-A^*}_+ u_{\l-1} ) + (X^A_- u_{\l+1}, X^{-A^*}_- u_{\l+1}) \right]  \\
 = \sum_{\l=1}^{\infty} \lambda_\l \left[ \left(1-\frac{1}{\l} \right)^2 (X^A_+ u_{\l-1}, X^{-A^*}_+ u_{\l-1}) + \left(1+\frac{1}{\l+d-2} \right)^2 (X^A_- u_{\l+1}, X^{-A^*}_- u_{\l+1}) \right] \\ - (R \vd u, \vd u) - (F_A u, \vd u) + (Z_A(u), Z_{-A^*}(u)).
\end{multline*}
We rewrite this as 
\begin{multline*}
\sum_{\l=0}^{\infty} \alpha_\l (X^A_- u_{\l+1}, X^{-A^*}_- u_{\l+1}) - (R \vd u, \vd u) - (F_A u, \vd u) + (Z_A(u), Z_{-A^*}(u)) \\
 = \sum_{\l=1}^{\infty} \beta_\l (X^A_+ u_{\l-1}, X^{-A^*}_+ u_{\l-1})
\end{multline*}
where 
\begin{align*}
\alpha_\l &= \lambda_\l \left[ \left(1+\frac{1}{\l+d-2} \right)^2 - 1 \right] + (d-1), \\
\beta_\l &= \lambda_\l \left[ 1 -  \left(1-\frac{1}{\l} \right)^2 \right]  - (d-1).
\end{align*}
The result follows after simplifying the expressions for $\alpha_\l$ and $\beta_\l$.
\end{proof}

We will use the previous identity only in the case where the connection $A$ is unitary, or when $A=0$. In these cases the inner products become squares of $L^2$ norms, and we obtain the following energy identity which is equivalent with the Pestov identity with unitary connection given in \cite{GPSU}.

\begin{Proposition}[Pestov identity in terms of $X^A_{\pm}$] \label{prop_pestov_xplus_xminus}
Let $(M,g)$ be a compact manifold with or without boundary, and let $A$ be a unitary connection. Then 
\[
\norm{X^A_- u}_{\alpha}^2 - (R \vd u, \vd u) - (F_A u, \vd u) + \norm{Z_A(u)}^2 = \norm{X^A_+ u}_{\beta}^2
\]
for any $u \in C^{\infty}(SM, \mC^n)$ with $u|_{\partial(SM)} = 0$ in the boundary case.
\end{Proposition}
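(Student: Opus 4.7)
The plan is to deduce Proposition \ref{prop_pestov_xplus_xminus} as a direct specialization of the general-connection Pestov identity established in Lemma \ref{lemma_pestov_xplusminus_generalconnection}, using the fact that a unitary connection satisfies $A^* = -A$ so that $-A^* = A$. The whole proof is essentially an exercise in bookkeeping: once we make that substitution, every sesquilinear inner product in Lemma \ref{lemma_pestov_xplusminus_generalconnection} collapses into one of the weighted squared norms in the Proposition.

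Concretely, I would start from the conclusion of Lemma \ref{lemma_pestov_xplusminus_generalconnection}:
\[
(X^A_- u, X^{-A^*}_- u)_{\alpha} - (R \vd u, \vd u) - (F_A u, \vd u) + (Z_A(u), Z_{-A^*}(u)) = (X^A_+ u, X^{-A^*}_+ u)_{\beta},
\]
and substitute $-A^* = A$. Under this substitution the differential operator $X^{-A^*} = X + A$ coincides with $X^A$ on $C^\infty(SM,\mC^n)$. Since the decomposition $X^A = X^A_- + X^A_+$ is induced by how $X^A$ shifts vertical spherical harmonics (multiplication by $A \in \Omega_1$ together with the geodesic vector field $X$, both of which shift by $\pm 1$), one obtains $X^{-A^*}_\pm u = X^A_\pm u$. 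Similarly $\hd_{-A^*} = \hd_A$, and hence the $\vdiv$-free parts agree, $Z_{-A^*}(u) = Z_A(u)$. Feeding these equalities into the displayed identity turns the three sesquilinear products into the desired squared weighted norms $\norm{X^A_- u}_\alpha^2$, $\norm{Z_A(u)}^2$, and $\norm{X^A_+ u}_\beta^2$, yielding exactly the statement of the Proposition.

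The only small point to verify is that the two surviving bilinear terms $(R \vd u, \vd u)$ and $(F_A u, \vd u)$ are real, since both sides of the claimed identity are sums and differences of real squared norms. Reality of the first is immediate from the fiberwise symmetry of $R$. For the second, one can either check directly from $F_A = X(\vd A) - \hd A + [A, \vd A]$ that $A^* = -A$ forces $F_A$ to take values in skew-Hermitian matrices, so that $(F_A u, \vd u)$ equals its complex conjugate after integration by parts, or simply note that reality is forced by the identity itself since every other term is already real. No substantive obstacle appears here; all the analytic and curvature input has been absorbed into Lemma \ref{lemma_pestov_xplusminus_generalconnection}, and the main (and only) step of the proof is the substitution $-A^* = A$.
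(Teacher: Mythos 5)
Your proof is correct and matches the paper's own (implicit) argument exactly: the paper also obtains Proposition \ref{prop_pestov_xplus_xminus} by specializing Lemma \ref{lemma_pestov_xplusminus_generalconnection} to the unitary case $-A^* = A$, whereupon $X^{-A^*}_\pm = X^A_\pm$ and $Z_{-A^*} = Z_A$ turn the sesquilinear terms into squared norms. Your extra check that $(R\vd u, \vd u)$ and $(F_A u, \vd u)$ are real is a useful sanity check but not a gap.
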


\section{Frequency localization} \label{sec_frequency_localization}

Recall that any $u \in C^{\infty}(SM, \mC^n)$ admits an $L^2$-orthogonal decomposition 
\[
u = \sum_{\l=0}^{\infty} u_\l, \qquad u_\l \in \Omega_\l,
\]
where $\Omega_\l$ corresponds to the set of vertical spherical harmonics of degree $\l$. Since $X_{\pm}^A$ maps $\Omega_\l$ to $\Omega_{\l \pm 1}$, it is immediate that the Pestov identity with unitary connection (Proposition \ref{prop_pestov_xplus_xminus}) reduces to the following identity when applied to functions in $\Omega_\l$.

\begin{Proposition}[Pestov identity on $\Omega_\l$] \label{prop_pestov_localized}
Let $(M,g)$ be a compact manifold with or without boundary, let $A$ be a unitary connection, and let $\l \geq 0$. One has 
\[
\alpha_{\l-1} \norm{X^A_- u}^2 - (R \vd u, \vd u) - (F_A u, \vd u) + \norm{Z_A(u)}^2 = \beta_{\l+1} \norm{X^A_+ u}^2, \qquad u \in \Omega_\l,
\]
if additionally $u|_{\partial(SM)} = 0$ in the boundary case. (We define $\alpha_{-1} = 0$.)
\end{Proposition}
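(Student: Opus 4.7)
This proposition is the frequency-by-frequency specialization of Proposition~\ref{prop_pestov_xplus_xminus}. The plan is simply to apply that identity to a function $u \in \Omega_l$ and observe that both weighted norms $\norm{\cdot}_\alpha$ and $\norm{\cdot}_\beta$ each collapse to a single term.

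First I would note that $u \in \Omega_l \subset C^\infty(SM,\mC^n)$ and, in the boundary case, satisfies $u|_{\partial(SM)} = 0$ by hypothesis, so the assumptions of Proposition~\ref{prop_pestov_xplus_xminus} hold and
\[
\norm{X^A_- u}_\alpha^2 - (R\vd u,\vd u) - (F_A u,\vd u) + \norm{Z_A(u)}^2 = \norm{X^A_+ u}_\beta^2.
\]
Next I would use the frequency shift property $X^A_\pm : \Omega_k \to \Omega_{k\pm 1}$ recalled in Section~\ref{sec_preliminaries} (the discussion preceding Lemma~\ref{lemma_pestov_xplusminus_generalconnection}), which yields $X^A_- u \in \Omega_{l-1}$ and $X^A_+ u \in \Omega_{l+1}$. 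Since $\norm{w}_\alpha^2 = \sum_{k \geq 0} \alpha_k \norm{w_k}^2$ and the decomposition into $\Omega_k$ is $L^2$-orthogonal, only one term survives in each weighted norm, giving $\norm{X^A_- u}_\alpha^2 = \alpha_{l-1}\norm{X^A_- u}^2$ and $\norm{X^A_+ u}_\beta^2 = \beta_{l+1}\norm{X^A_+ u}^2$. The boundary case $l = 0$ is consistent via the convention $\alpha_{-1} = 0$, matching $X^A_- u = 0$ on $\Omega_0$.

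The remaining quantities $(R\vd u,\vd u)$, $(F_A u,\vd u)$, and $\norm{Z_A(u)}^2$ depend only on $u$ itself and require no modification. Substituting the simplified norms into the displayed identity yields the stated equality. There is no real obstacle: the entire content of the proposition is already contained in Proposition~\ref{prop_pestov_xplus_xminus}, and the point of recording this specialization is to make manifest the \emph{frequency localization} of the Pestov identity advertised in the introduction, which will serve as the starting block for building the Carleman estimates in subsequent sections.
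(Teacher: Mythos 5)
Your proof is correct and takes essentially the same approach as the paper, which treats the proposition as an immediate consequence of Proposition~\ref{prop_pestov_xplus_xminus} combined with the mapping property $X^A_\pm:\Omega_l\to\Omega_{l\pm1}$. The collapse of the weighted norms and the $l=0$ convention $\alpha_{-1}=0$ are handled exactly as you describe.
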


If $\dim(M) = 2$, the Pestov identity on $\Omega_\l$ is the same as the Guillemin-Kazhdan energy identity \cite{GK}. In \cite[Appendix B]{PSU_hd} it was observed that in two dimensions the Guillemin-Kazhdan identity is actually equivalent with the Pestov identity, in the sense that summing that Guillemin-Kazhdan identity over all $\l$ gives back the Pestov identity.

We will now show that the same is true in any dimension: the Pestov identity is equivalent with the frequency localized identities of Proposition \ref{prop_pestov_localized}. This means that the Pestov identity localizes completely with respect to vertical Fourier decompositions. This will be a very important observation in what follows.

\begin{Lemma} \label{lemma_pestov_equivalence}
The Pestov identity on $\Omega_\l$ is equivalent with the Pestov identity with unitary connection in the following sense: for any $u \in C^{\infty}(SM, \mC^n)$ with $u|_{\partial(SM)} = 0$ in the boundary case, one has 
\begin{multline*}
\sum_{\l=0}^{\infty} \left[ \alpha_{\l-1} \norm{X^A_- u_\l}^2 - (R \vd u_\l, \vd u_\l) - (F_A u_\l, \vd u_\l) + \norm{Z_A(u_\l)}^2 -\beta_{\l+1} \norm{X^A_+ u_\l}^2 \right] \\
 = \norm{X^A_- u}_{\alpha}^2 - (R \vd u, \vd u) - (F_A u, \vd u) + \norm{Z_A(u)}^2 - \norm{X^A_+ u}_{\beta}^2.
\end{multline*}
\end{Lemma}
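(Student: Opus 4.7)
My plan is to verify this algebraic identity by expanding $u = \sum_l u_l$ on both sides and showing that the bilinear cross-terms (with $l \neq l'$) cancel. First, for the $X_\pm^A$ terms, since $X_\pm^A$ sends $\Omega_l$ into $\Omega_{l\pm 1}$ and distinct $\Omega_m$ are $L^2$-orthogonal, a direct computation gives $(X_-^A u)_l = X_-^A u_{l+1}$ and $(X_+^A u)_l = X_+^A u_{l-1}$, hence $\|X_-^A u\|_\alpha^2 = \sum_l \alpha_{l-1}\|X_-^A u_l\|^2$ and $\|X_+^A u\|_\beta^2 = \sum_l \beta_{l+1}\|X_+^A u_l\|^2$, with no cross-terms. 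It therefore suffices to prove that the combined cross-terms in $\|Z_A(u)\|^2 - (R\vd u, \vd u) - (F_A u, \vd u)$ vanish, i.e.
\[
\sum_{l \neq l'}\bigl[(Z_A(u_l), Z_A(u_{l'})) - (R\vd u_l, \vd u_{l'}) - (F_A u_l, \vd u_{l'})\bigr] = 0.
\]

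To this end, I would apply the polarized version of the Pestov identity of Lemma \ref{lemma_pestov_general_connection} (specialized to unitary $A$, so that $-A^* = A$) to the pair $(u_l, u_{l'})$, giving
\[
(R\vd u_l, \vd u_{l'}) + (F_A u_l, \vd u_{l'}) = (X^A\vd u_l, X^A\vd u_{l'}) - (\vd X^A u_l, \vd X^A u_{l'}) + (d-1)(X^A u_l, X^A u_{l'}).
\]
Then the decomposition $X^A\vd u = \vd h(u) - Z_A(u)$ obtained in the proof of Lemma \ref{lemma_pestov_xplusminus_generalconnection}, with
\[
h(u) = \sum_{l\geq 1}\bigl[(1 - \tfrac{1}{l}) X_+^A u_{l-1} + (1 + \tfrac{1}{l+d-2}) X_-^A u_{l+1}\bigr],
\]
together with the orthogonality $\vd h(u) \perp Z_A(u)$ in $\mathcal{Z}$ (which follows from $(\vd h, Z_A) = -(h, \vdiv Z_A) = 0$), yields
\[
(X^A\vd u_l, X^A\vd u_{l'}) = (\vd h(u_l), \vd h(u_{l'})) + (Z_A(u_l), Z_A(u_{l'})).
\]
The identity to establish then reduces to
\[
\sum_{l\neq l'}\bigl[(\vd h(u_l), \vd h(u_{l'})) - (\vd X^A u_l, \vd X^A u_{l'}) + (d-1)(X^A u_l, X^A u_{l'})\bigr] = 0.
\]

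The final step is a direct bookkeeping computation. Each cross-term $(l \neq l')$ is nonzero only when $|l - l'| = 2$, in which case it reduces to a scalar multiple of $\mathrm{Re}(X_+^A u_{m-1}, X_-^A u_{m+1})$ in $\Omega_m$ for the unique intermediate degree $m$. Plugging in the explicit coefficients from $h(u_l)_m$ and from $(X^A u_l)_m$, together with the eigenvalues $\lambda_m$, the contribution from $(\vd h(u_l), \vd h(u_{l'}))$ at level $m$ is $2\lambda_m(1 - \tfrac{1}{m})(1 + \tfrac{1}{m+d-2})$, while those from $-(\vd X^A u_l, \vd X^A u_{l'})$ and $(d-1)(X^A u_l, X^A u_{l'})$ are $-2\lambda_m$ and $2(d-1)$ respectively. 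The miraculous identity $\lambda_m(1 - \tfrac{1}{m})(1 + \tfrac{1}{m+d-2}) = \lambda_m - (d-1)$ from the proof of Lemma \ref{lemma_pestov_xplusminus_generalconnection} then gives $2(\lambda_m - (d-1)) - 2\lambda_m + 2(d-1) = 0$, completing the cancellation.

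The main obstacle is precisely this joint cancellation: none of the three cross-term families $(R\vd u_l, \vd u_{l'})$, $(F_A u_l, \vd u_{l'})$, $(Z_A(u_l), Z_A(u_{l'}))$ vanishes individually for $l \neq l'$, so one must track the coefficients from the Pestov identity carefully and invoke the same miraculous relation that underlies the clean frequency-split form of Lemma \ref{lemma_pestov_xplusminus_generalconnection}.
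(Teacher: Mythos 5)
Your proof is correct but takes a genuinely different route from the paper's. The paper deduces Lemma \ref{lemma_pestov_equivalence} as a one-line corollary of the \emph{stronger} Lemma \ref{lemma_curvature_fourier_localization}, which states that each of the three bilinear cross-terms $(R\vd u_l, \vd u_{l'})$, $(F_A u_l, \vd u_{l'})$, $(Z_A(u_l), Z_A(u_{l'}))$ vanishes \emph{individually} whenever $l \neq l'$; that result is proved by local-coordinate computations involving the symmetries of the curvature tensor and the auxiliary spherical-harmonics Lemma \ref{lemma_partialju_decomposition}. You instead prove only the aggregate vanishing, by feeding the polarized Pestov identity of Lemma \ref{lemma_pestov_general_connection} through the decomposition $X^A\vd u = \vd h(u) - Z_A(u)$ and then using the same algebraic relation $\lambda_m(1-\tfrac{1}{m})(1+\tfrac{1}{m+d-2}) = \lambda_m - (d-1)$ that already drives Lemma \ref{lemma_pestov_xplusminus_generalconnection}. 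This is shorter and more elementary; the bookkeeping is correct (the cross-terms are indeed supported on $|l - l'| = 2$, and the three coefficient contributions $2(\lambda_m - (d-1)) - 2\lambda_m + 2(d-1)$ cancel). However, your closing remark is factually wrong: you claim that none of the three cross-term families vanishes individually, whereas Lemma \ref{lemma_curvature_fourier_localization} shows that all three do, even separately — your calculation shows the sum cancels but doesn't detect that each summand is already zero. This matters beyond cosmetics: the individual vanishing of $(Z(u_l), Z(u_{l'}))$ is what is actually invoked later in the proof of Theorem \ref{thm_pestov_carleman_shifted}, so while your argument is a valid and lighter proof of the stated lemma, it does not substitute for the paper's Lemma \ref{lemma_curvature_fourier_localization} in the broader architecture.
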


The result will follow if we can show that the curvature and $Z_A$ terms localise. Thus Lemma \ref{lemma_pestov_equivalence} is a corollary of the next result.

\begin{Lemma} \label{lemma_curvature_fourier_localization}
If $(M,g)$ is a Riemannian manifold and $A$ is a unitary connection, then 
\[
(R\vd u, \vd w) = 0, \qquad (F_A u, \vd w) = 0, \qquad (Z_A(u), Z_A(w)) = 0,
\]
whenever $u \in \Omega_m$, $w \in \Omega_\l$ and $m \neq \l$.
\end{Lemma}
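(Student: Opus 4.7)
My plan is to reduce each of the three bilinear forms to a fiberwise integral on $S_xM\cong S^{d-1}$ whose integrand factors, after exploiting the Riemann and connection symmetries, into a product of two harmonic polynomials in $v$ of pure degrees $m$ and $l$. The vanishing then follows from the $L^{2}$-orthogonality of spherical harmonics of distinct degrees on $S^{d-1}$. For $u\in\Omega_{m}$ and $w\in\Omega_{l}$ I would extend $u|_{S_xM}$ and $w|_{S_xM}$ to homogeneous harmonic polynomials $P_{m},P_{l}$ on $T_{x}M$, so that $\vd u(x,v)=\nabla P_{m}(v)-mP_{m}(v)v$ (and analogously for $\vd w$).

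For $(R\vd u,\vd w)=0$ I would use $R(x,v)\vd u=R_{x}(\vd u,v)v$ together with the Riemann symmetries $R_{abcd}=-R_{bacd}=-R_{abdc}$: only the antisymmetric parts of $(\partial^{a}P_{m})v^{b}$ in $(a,b)$ and of $v^{c}(\partial^{d}P_{l})$ in $(c,d)$ survive the contraction with $R_{abcd}$, giving
\[
\langle R\vd u,\vd w\rangle(x,v)=\pm R_{abcd}(x)\,\omega_{m}^{ab}(v)\,\omega_{l}^{cd}(v),
\qquad
\omega_{k}^{ab}(v):=\tfrac{1}{2}\bigl[(\partial^{a}P_{k})v^{b}-(\partial^{b}P_{k})v^{a}\bigr].
\]
The decisive observation is that $\omega_{k}^{ab}$ is a harmonic polynomial in $v$ of degree $k$: since $\Delta_{v}P_{k}=0$ and $\Delta_{v}v^{b}=0$ one finds $\Delta_{v}\bigl((\partial^{a}P_{k})v^{b}\bigr)=2\,\partial^{a}\partial^{b}P_{k}$, which is symmetric in $(a,b)$, so the antisymmetrization $\omega_{k}^{ab}$ is annihilated by $\Delta_{v}$. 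Hence on $S_{x}M$ the fiber integrand is a product of two pure spherical harmonics of degrees $m$ and $l$; the product's harmonic decomposition only contains degrees $|m-l|,|m-l|+2,\ldots,m+l$, which for $m\ne l$ has no constant term, so the fiber integral vanishes pointwise in $(a,b,c,d)$, and hence so does the integral over $M$.

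The identity $(F_{A}u,\vd w)=0$ I would handle analogously: writing $A(x,v)=A_{i}(x)v^{i}$ and expanding $F_{A}=X(\vd A)-\hd A+[A,\vd A]$ in local coordinates, each resulting term paired with $\vd w$ produces a fiberwise integrand that factors as pure degree-$m$ and pure degree-$l$ spherical harmonics in $v$ and vanishes by the same orthogonality. For $(Z_{A}(u),Z_{A}(w))=0$ I would use the orthogonal splitting $\mathcal{Z}=\vd(C^{\infty}(SM))\oplus\ker\vdiv$ (so $\vd f\perp Z$ whenever $\vdiv Z=0$) together with the identity $\hd_{A}u=\vd g^{(u)}+Z_{A}(u)$ coming from the proof of Lemma \ref{lemma_pestov_xplusminus_generalconnection}, in which $g^{(u)}\in\Omega_{m-1}\oplus\Omega_{m+1}$ for $u\in\Omega_{m}$. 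The cross terms then vanish and one obtains
\[
(Z_{A}(u),Z_{A}(w))=(\hd_{A}u,\hd_{A}w)-(\vd g^{(u)},\vd g^{(w)}).
\]
For $|m-l|\ge 3$ both summands vanish by frequency mismatch; for $|m-l|\in\{1,2\}$ I would evaluate $(\hd_{A}u,\hd_{A}w)$ by the same fiberwise polynomial method as above and verify that it cancels the explicit gradient piece $(\vd g^{(u)},\vd g^{(w)})$.

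The main obstacle is verifying the pure-degree-$m$ structure for $F_{A}u$ and $Z_{A}(u)$ when $u\in\Omega_{m}$. For $R\vd u$ this essentially reduces to the single identity $\Delta_{v}\omega_{k}^{ab}=0$, but the connection terms require more intricate bookkeeping with the $1$-form $A$ and its horizontal derivative, and in the $Z_{A}$ case one must track the cancellation in the delicate range $|m-l|\in\{1,2\}$ where the gradient part $\vd g^{(u)}$ can contribute at the same frequencies as $\vd g^{(w)}$.
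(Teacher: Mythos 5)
Your treatment of the curvature term is correct and in fact slightly streamlines the paper's argument. Where the paper decomposes $h^a v^b$ into $\Omega_m \oplus \Omega_{m-2}$ and checks the cases $l \in \{m-2,m,m+2\}$ separately (using the symmetry $h_{jk}=h_{kj}$ to kill the $\Omega_{m-2}$ piece), you directly observe that the antisymmetrization $\omega_m^{ab}$ is a pure degree-$m$ harmonic polynomial via $\Delta_v\big((\partial^a P_m)v^b\big)=2\partial^a\partial^b P_m$, so the fibrewise orthogonality gives the vanishing at once. For $(F_Au,\vd w)$ the same idea is essentially right, but your plan to expand $F_A=X(\vd A)-\hd A+[A,\vd A]$ from scratch is not a good entry point: the clean starting formula is $\langle F_Au,Z\rangle=(f_A)_{\alpha\beta}(v,u_\alpha Z_\beta)$ with $f_A=dA+A\wedge A$ (from \cite[Lemma 3.1]{GPSU}), as the paper uses. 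From there, your antisymmetrization observation shows $(f_A)_{jk}\,v^j\,\partial^k\bar w_\beta$ is a pure degree-$l$ spherical harmonic for $w\in\Omega_l$, which actually gives the vanishing without the unitarity of $A$ (the paper instead invokes the unitary symmetry $(F_Au,\vd w)=\overline{(F_Aw,\vd u)}$ to close the case $l=m+2$). But your claim that each \emph{raw} term of $F_A$ individually pairs into a product of pure-degree harmonics is unlikely to hold term by term; you would need the $f_A$ regrouping first, and you have not carried out the computation.

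The genuine gap is in the $Z_A$ part. Your reduction $(Z_A(u),Z_A(w))=(\hd_A u,\hd_A w)-(\vd g^{(u)},\vd g^{(w)})$ is fine, but the cancellation you need in the range $|m-l|\in\{1,2\}$ is a real algebraic statement that you have not verified, and there is no reason to expect it to follow from orthogonality alone: this is exactly the kind of coefficient cancellation (cf.\ the $\lambda_l(1-\tfrac1l)(1+\tfrac1{l+d-2})=\lambda_l-(d-1)$ identity) that underlies Lemma~\ref{lemma_pestov_xplusminus_generalconnection} in the first place. The paper avoids this entirely: apply Proposition~\ref{prop_pestov_xplus_xminus} with polarization (using the unitary symmetry of $F_A$) to write
\[
(Z_A(u),Z_A(w)) = (X^A_+u,X^A_+w)_\beta - (X^A_-u,X^A_-w)_\alpha + (R\vd u,\vd w) + (F_Au,\vd w).
\]
The first two inner products are automatically diagonal in frequency because $X^A_\pm$ shift the vertical degree by exactly one, and the last two vanish by the parts of the lemma you have just proved. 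This is where you should route your argument; the direct fibrewise verification you propose is both harder and unnecessary.
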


To prove this lemma, we first prove two auxiliary lemmas. For the auxiliary lemmas we work in $\re^n$ with the standard inner product and let $S^{n-1}$ be the unit sphere. We denote by $\Omega_m\subset C^{\infty}(S^{n-1})$ the space of spherical harmonics of degree $m$; let $\nabla$ denote the gradient in $S^{n-1}$ with the canonical metric induced by $\re^n$.

\begin{Lemma} Let $\alpha$ be any anti-symmetric 2-form in $\re^n$. Given $u\in \Omega_m$, the function
\[S^{n-1}\ni x\mapsto \alpha(x,\nabla u(x))\]
belongs to $\Omega_m$.
\label{lemma:anti}

\end{Lemma}

\begin{proof} It suffices to prove the claim for elements $dx_{i}\wedge dx_{j}$ of the standard basis of $\Lambda^{2}(\re^{n})^*$.
Let $f:\re^{n}\setminus\{0\}\to S^{n-1}$ be the map $f(x)=x/|x|$ and introduce the vector fields on $S^{n-1}$ given by
\[Y_{j}(x):=df_{x}(\partial /\partial x_{j}),\;\;\;\text{for}\;x\in S^{n-1}.\]
Since $df_{x}(v)=v-\langle x,v\rangle x$ for $x\in S^{n-1}$ and $v\in\re^{n}$ we have 
\begin{equation}
Y_{j}(x)=\frac{\partial}{\partial x_{j}}-x_{j}x.
\label{eq:Ys}
\end{equation}
One easily checks that
\[\nabla u=Y_{j}(u)\frac{\partial}{\partial x_{j}}=\nabla_{\re^{n}}(f^*u)|_{S^{n-1}}.\]
Using \eqref{eq:Ys} we see that
\[x_{i}Y_{j}-x_{j}Y_{i}=x_{i}\frac{\partial}{\partial x_{j}}-x_{j}\frac{\partial}{\partial x_{i}}\]
and since the latter is a Killing field of $S^{n-1}$ we have
\[[\Delta,x_{i}Y_{j}-x_{j}Y_{i}]=0.\]
Thus
\[dx_{i}\wedge dx_{j}(x,\nabla u)=x_{i}Y_{j}(u)-x_{j}Y_{i}(u)\in \Omega_{m}\]
as desired.
\end{proof}

Given $\alpha,\beta\in \Lambda^{2}((\re^{n})^*)$ recall that the symmetric product of $\alpha$ and $\beta$ as a 4-tensor is given by
\[(\alpha \odot \beta)(x,y,z,w)=\alpha(x,y)\beta(z,w)+\alpha(z,w)\beta(x,y).\]
This gives $\alpha\odot\beta\in S^{2}(\Lambda^{2}(\re^{n})^*)$ and moreover, elements of this form span $S^{2}(\Lambda^{2}(\re^{n})^*)$.

\begin{Lemma}Let $R\in S^{2}(\Lambda^{2}(\re^{n})^*)$. Then
\[\int_{S^{n-1}}R(\nabla u,x,x,\nabla w)\,dx=0\]
for any $u\in\Omega_m$ and $w\in\Omega_{k}$ with $m\neq k$.
\label{lemma:curR}
\end{Lemma}

\begin{proof} Given that elements of the form $\alpha\odot\beta$ span $S^{2}(\Lambda^{2}(\re^{n})^*)$ it suffices to show the claim for such 4-tensors. Using the definition of the symmetric product it is enough to prove that
\[\int_{S^{n-1}}\alpha(\nabla u,x)\beta(x,\nabla w)\,dx=0\]
for any $u\in\Omega_m$ and $w\in\Omega_{k}$ with $m\neq k$. This follows right away from the previous lemma.
\end{proof}


\begin{proof}[Proof of Lemma \ref{lemma_curvature_fourier_localization}]

The proof reduces to a statement for fixed $x$ and the first claim involving the Riemann curvature tensor follows directly from Lemma \ref{lemma:curR}.

For the result involving $F_A$, we first use \cite[Lemma 3.1]{GPSU} and observe that given $u = (u_1, \ldots, u_n) \in \mC^n$ and $Z = (Z_1, \ldots, Z_n)$ with $\langle Z_{\gamma}, v \rangle = 0$ for $1 \leq \gamma \leq n$, one has that the pointwise inner product is given by
\[
\langle F_A u, Z \rangle = \sum_{\gamma,\delta} \langle (F_A)_{\gamma\delta} u_\gamma, Z_\delta \rangle = \sum_{\gamma,\delta} (f_A)_{\gamma\delta}(v, u
_\gamma Z_\delta)
\]
where $f_A = dA + A \wedge A$ is an $n \times n$ matrix of complex $2$-forms. It follows that 
\[
(F_A u, \vd w) = \int_{S_x M} (f_A)_{\gamma \delta}(v, u_\gamma \vd \bar{w}_\delta) \,d(S_x M).
\]
Since $(f_{A})_{\gamma\delta}$ is a 2-form for all $\gamma$ and $\delta$ we may use
Lemma \ref{lemma:anti} to deduce that $(F_A u, \vd w) = 0$ whenever $m \neq \l$.

Finally, to prove to statement involving $Z_A$ we argue as follows.
The definition of $F_A$ and Lemma \ref{lemma_basic_commutator_identities} imply 
\[
\vdiv(F_A) = 0.
\]
Indeed, the bracket relations in Lemma \ref{lemma_basic_commutator_identities} give $\vdiv(X(\vd A) - \hd A)=0$. In order to check that also $\vdiv([A, \vd A])=0$ it suffices to note that if $a,b$ are scalar 1-forms, then
$\vdiv (a\vd b)=\vdiv (b\vd a)$ since $\Delta a=-(d-1)a$ and $\Delta b=-(d-1)b$.
Thus invoking the fact that $A$ is unitary
\begin{equation} \label{fa_symmetry}
(F_A u, \vd w) = \overline{(F_A w, \vd u)}.
\end{equation}
Now observe that Proposition \ref{prop_pestov_xplus_xminus}, the formula \eqref{fa_symmetry}, and the polarization identity imply that 
\[
(Z_A(u), Z_A(w)) = (X^A_+ u, X^A_+ w)_{\beta} - (X^A_- u, X^A_- w)_{\alpha} + (R \vd u, \vd w) + (F_A u, \vd w).
\]
The statements proved above imply that $(Z_A(u), Z_A(w)) = 0$ when $m \neq \l$.
\end{proof}

\section{Carleman estimates in negative curvature} \label{sec_carleman_negative_curvature}

We will now give the first Carleman estimate for X-ray transforms, valid for negative sectional curvature. Throughout this section we will assume that $(M,g)$ is a compact Riemannian manifold with or without boundary, with $d = \dim(M) \geq 2$. The following theorem is the main result of this section.

\begin{Theorem} \label{thm_pestov_carleman_negativecurvature}
Assume that that $(M,g)$ has sectional curvature $\leq -\kappa$ where $\kappa > 0$. For any $s > -1/2$, for any $m \geq 1$ and for any $u \in C^{\infty}(SM, \mC^n)$ (with $u|_{\partial(SM)} = 0$ in the boundary case) one has 
\begin{align*}
 & \sum_{\l=m}^{m+1} \l^{2s+1} \norm{X_- u_\l}^2 + (2s+1) \sum_{\l=m+2}^{\infty} (\l-1)^{2s} \norm{X_- u_\l}^2+ \kappa \sum_{\l=m}^{\infty} \l^{2s+2} \norm{u_\l}^2 \\
 &\qquad \qquad + \sum_{\l=m}^{\infty} \l^{2s} \norm{Z(u_\l)}^2 \leq \frac{C}{2s+1} \sum_{\l=m+1}^{\infty} \l^{2s+2} \norm{(Xu)_{\l}}^2
\end{align*}
where $C = C(d) > 0$.
\end{Theorem}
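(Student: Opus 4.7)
My plan is to follow the strategy suggested by the target inequality itself: sum the frequency-localized Pestov identity against a weight $w_l \sim l^{2s}$ and then estimate the right-hand side using the Pythagorean decomposition of $(Xu)_{l+1}$. The starting point is Proposition \ref{prop_pestov_localized} applied with trivial connection $A = 0$, which combined with the negative curvature estimate $-(R\vd u_l, \vd u_l) \geq \kappa\lambda_l\|u_l\|^2$ gives
\[
\alpha_{l-1}\|X_- u_l\|^2 + \kappa\lambda_l\|u_l\|^2 + \|Z(u_l)\|^2 \leq \beta_{l+1}\|X_+ u_l\|^2
\]
for every $l \geq 0$. With $w_l \sim l^{2s}$, the three LHS terms of the target inequality appear on the nose: $w_l\alpha_{l-1} \sim 2l^{2s+1}$ (matching the boundary $X_-$ coefficient at $l=m,m+1$), $\kappa w_l\lambda_l \sim \kappa l^{2s+2}$, and $w_l \sim l^{2s}$. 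The entire difficulty lies in bounding the RHS $\sum w_l\beta_{l+1}\|X_+u_l\|^2$ by $C\sum l^{2s+2}\|(Xu)_l\|^2$ modulo $\|X_-\|^2$ terms that can be absorbed into the LHS.

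For this I will use the identity $\|(Xu)_{l+1}\|^2 = \|X_+ u_l\|^2 + \|X_- u_{l+2}\|^2 + 2\text{Re}(X_+ u_l, X_- u_{l+2})$ (both summands lying in $\Omega_{l+1}$), together with a weighted Cauchy--Schwarz on the cross term:
\[
\|X_+ u_l\|^2 \leq (1+\epsilon_l)\|(Xu)_{l+1}\|^2 + (1+\epsilon_l^{-1})\|X_- u_{l+2}\|^2.
\]
The crucial point is to take $\epsilon_l$ of order $l/(2s+1)$. Then $(1+\epsilon_l^{-1}) = 1 + O((2s+1)/l)$ is only a tiny perturbation of $1$, so after the shift $l \mapsto l-2$ the resulting $\|X_- u_l\|^2$ term on the RHS has coefficient only marginally above $(l-2)^{2s}\beta_{l-1}$; simultaneously $(1+\epsilon_l) \sim l/(2s+1)$ injects the missing factor of $l$ into the good term, promoting the $(Xu)$ weight from $l^{2s+1}$ to $l^{2s+2}/(2s+1)$ and accounting for the $1/(2s+1)$ factor in $C$.

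After absorbing the shifted $\|X_-\|^2$ term, the net coefficient of $\|X_- u_l\|^2$ for $l \geq m+2$ is $w_l\alpha_{l-1} - (1+\epsilon_{l-2}^{-1})w_{l-2}\beta_{l-1}$. I plan to verify this exceeds $(2s+1)(l-1)^{2s}$ using two algebraic ingredients: the explicit identity
\[
\alpha_{l-1} - \beta_{l-1} = \frac{(2l+d-4)^2}{(l+d-3)(l-1)},
\]
which is bounded below by a positive constant and tends to $4$ as $l \to \infty$, and the elementary estimate $l^{2s+1}-(l-2)^{2s+1}\geq 2(2s+1)(l-2)^{2s}$ valid for $s\geq 0$ (for $d=2$ the two combine sharply into $l^{2s}\alpha_{l-1}-(l-2)^{2s}\beta_{l-1} = 2[l^{2s+1}-(l-2)^{2s+1}]$). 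Together these yield a slack of order $4(2s+1)(l-2)^{2s}$, of which the cross-term correction $\epsilon_{l-2}^{-1}(l-2)^{2s}\beta_{l-1} = O((2s+1)(l-2)^{2s})$ eats up only a fraction, leaving the net coefficient comfortably above $(2s+1)(l-1)^{2s}$. The boundary shells $l = m, m+1$ need no absorption, and the bound $\alpha_{l-1}\geq 2l$ handles them, with a small modification of $w_l$ in the corner case $d=2, l=1$ where $\alpha_0 = 1$.

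The main obstacle is precisely this cross term $2\text{Re}(X_+ u_l, X_- u_{l+2})$: a Cauchy--Schwarz with $\epsilon_l$ bounded uniformly in $l$ leaves a shifted $\|X_-\|^2$ coefficient with the same leading factor of $l$ as the LHS coefficient, so no absorption is possible; the $l$-dependent balancing $\epsilon_l \asymp l/(2s+1)$ is what rescues the estimate, and is also the source of the $(2s+1)^{-1}$ factor in the advertised constant $C = (d+4)^2/(2s+1)$. The regime $-1/2 < s < 0$ in dimensions $d \geq 3$ (for which no explicit constant is stated) needs extra care because the mean-value inequality for $x^{2s+1}$ points the wrong way; there I expect to spend part of the curvature term $\kappa l^{2s+2}\|u_l\|^2$ (larger than the $\|X_-\|^2$ loss by a factor of $l$ at high frequencies) to absorb the residual error, treating the finitely many low-frequency shells by adjusting $w_l$ near the boundary.
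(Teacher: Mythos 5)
Your proposal is essentially the paper's own proof. The three structural moves are identical: sum the frequency-localized Pestov inequality (Lemma \ref{lemma_pestov_localized_negativecurvature}) against weights $w_l = l^{2s}$, split $\norm{X_+ u_l}^2$ via a weighted Cauchy--Schwarz against $\norm{(Xu)_{l+1}}^2$ and $\norm{X_- u_{l+2}}^2$, and absorb the shifted $X_-$ term by choosing the balancing parameter frequency-dependent (your $\epsilon_l \asymp l/(2s+1)$ is the reciprocal of the paper's $\eps_l$ in \eqref{epsl_condition}). Your explicit identity $\alpha_{l-1}-\beta_{l-1} = (2l+d-4)^2/[(l+d-3)(l-1)]$ is a cleaner way to exhibit the $O(1)$ gap than the paper's termwise expansion of $\alpha_l\gamma_{l+1}^2 - \beta_l\gamma_{l-1}^2$, and your corner observation ($\alpha_0 = 1$ when $d=2$, falling short of the claimed $2l^{2s+1}$ at $l=m=1$) is real and worth noting.

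One caution on the regime $d \geq 3$, $-1/2 < s < 0$: the plan to ``spend part of the curvature term $\kappa l^{2s+2}\norm{u_l}^2$'' to absorb a deficit in the $X_-$ coefficient does not go through as stated, since $\norm{X_- u_l}^2$ and $\norm{u_l}^2$ are not comparable (there is no uniform bound $\norm{X_- u_l} \lesssim \norm{u_l}$ you can call on here). The remedy the paper uses, and which you also mention, is the correct one: observe from the expansion of $\alpha_l\gamma_{l+1}^2 - \beta_l\gamma_{l-1}^2$ that the troublesome $(d-2)$-term is $O(l^{2s-1})$ and hence dominated by the main $2(2s+1)l^{2s}$ term once $l$ exceeds a threshold $l_0 = l_0(d,s)$, and then redefine the finitely many weights $\gamma_l$ with $l < l_0$ (the paper sets $\gamma_{l-1}^2 = \tfrac12(\alpha_l/\beta_l)\gamma_{l+1}^2$ there) so that condition \eqref{gammal_condition} holds for all $l \geq m+1$. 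So drop the curvature-absorption idea for that subcase and stick with the low-frequency weight adjustment.
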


If $s$ is large, the previous result easily implies the Carleman estimate stated in the introduction.

\begin{Theorem} \label{thm_pestov_carleman_negativecurvature_tau}
Assume that that $(M,g)$ has sectional curvature $\leq -\kappa$ where $\kappa > 0$. For any $\tau \geq 1$ and $m \geq 1$, one has 
\begin{align*}
\sum_{\l=m}^{\infty} e^{2\tau \log(\l)} \norm{u_{\l}}^2 \leq \frac{C}{\kappa \tau} \sum_{\l=m+1}^{\infty} e^{2\tau \log(\l)}\norm{(Xu)_{\l}}^2
\end{align*}
whenever $u \in C^{\infty}(SM, \mC^n)$ (with $u|_{\partial(SM)} = 0$ in the boundary case).
\end{Theorem}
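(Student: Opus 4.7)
The plan is to derive Theorem \ref{thm_pestov_carleman_negativecurvature_tau} as a direct corollary of Theorem \ref{thm_pestov_carleman_negativecurvature} by choosing the regularity parameter $s$ in terms of $\tau$. The key observation is purely notational: since $e^{2\tau \log(l)} = l^{2\tau}$, the logarithmic Carleman weight on the frequency side is nothing but a polynomial weight, so the estimate to be proved is really
\[
\sum_{l=m}^{\infty} l^{2\tau} \norm{u_l}^2 \leq \frac{(d+4)^2}{\kappa \tau} \sum_{l=m+1}^{\infty} l^{2\tau} \norm{(Xu)_l}^2,
\]
which matches the form of the polynomial estimate in Theorem \ref{thm_pestov_carleman_negativecurvature} after absorbing the weight $l^{2s+2}$ into the $u_l$ term.

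Concretely, I would set $s := \tau - 1$. For $\tau \geq 1$ we have $s \geq 0$, so Theorem \ref{thm_pestov_carleman_negativecurvature} applies with the explicit constant $C = (d+4)^2/(2s+1) = (d+4)^2/(2\tau - 1)$. Throwing away the manifestly nonnegative contributions coming from $X_- u_l$ and $Z(u_l)$ on the left-hand side of that theorem leaves
\[
\kappa \sum_{l=m}^{\infty} l^{2\tau} \norm{u_l}^2 \leq \frac{(d+4)^2}{2\tau-1} \sum_{l=m+1}^{\infty} l^{2\tau} \norm{(Xu)_l}^2.
\]

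To finish I would simply estimate $2\tau - 1 \geq \tau$, which holds for all $\tau \geq 1$, and divide through by $\kappa$. This yields the claimed inequality with constant $(d+4)^2/(\kappa \tau)$ and restores the form $e^{2\tau \phi_l}$ with $\phi_l = \log l$. There is essentially no obstacle here beyond the bookkeeping: all the analytic content, in particular the use of negative curvature to absorb the curvature term and the frequency localization of the Pestov identity, has already been packaged into Theorem \ref{thm_pestov_carleman_negativecurvature}. The only mild point to note is that the estimate is vacuous/trivial at $l = 0$ (the weight $l^{2\tau}$ vanishes there), so the hypothesis $m \geq 1$ is automatic and one does not need to worry about the $l=0$ Fourier mode.
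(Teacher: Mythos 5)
Your proof is correct and is exactly the paper's own proof of this theorem: the paper simply states ``It is enough to choose $\tau = s+1$ with $s \geq 0$ in Theorem \ref{thm_pestov_carleman_negativecurvature},'' which is the same substitution $s = \tau - 1$ you use, and the remaining bookkeeping (dropping the nonnegative $X_-$ and $Z$ terms, using $2\tau - 1 \geq \tau$) is the implicit content of that ``it is enough.''
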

\begin{proof}
It is enough to choose $\tau = s+1$ with $s \geq 0$ in Theorem \ref{thm_pestov_carleman_negativecurvature}.
\end{proof}

As discussed in the introduction, the Carleman estimate in Theorem \ref{thm_pestov_carleman_negativecurvature} can also be understood as a version of the Pestov identity that has been shifted to a different regularity scale. To explain this, introduce the mixed norms 
\begin{align*}
\norm{u}_{L^2_x H^s_v}^2 &= \sum_{\l=0}^{\infty} \br{\l}^{2s} \norm{u_{\l}}^2, \\
\norm{\vd u}_{L^2_x H^s_v}^2 &= \sum_{\l=0}^{\infty} \br{\l}^{2s} \norm{\vd u_{\l}}^2, \\
\norm{\nabla_{SM} u}_{L^2_x H^s_v}^2 &= \sum_{\l=0}^{\infty} \br{\l}^{2s} (\norm{X_- u_{\l+1}}^2 + \norm{X_+ u_{\l-1}}^2 + \norm{Z(u_{\l})}^2 + \norm{\vd u_{\l}}^2)
\end{align*}
where $\br{\l} = (1+\l^2)^{1/2}$, $\norm{\vd u_{\l}}^2 = \lambda_{\l} \norm{u_{\l}}^2$, and $u_{-1} = 0$.

Clearly $\norm{u}_{L^2_x H^0_v} = \norm{u}_{L^2(SM)}$ and $\norm{\vd u}_{L^2_x H^0_v} = \norm{\vd u}_{L^2(SM)}$. We also have that $\norm{\nabla_{SM} u}_{L^2_x H^0_v} \sim \norm{\nabla_{SM} u}_{L^2(SM)}$ by the formula (see \cite[Lemma 5.1]{LRS}) 
\[
\norm{\nabla_{SM} u}^2 = \norm{Xu}^2 + \norm{\hd u}^2 + \norm{\vd u}^2 \sim \norm{X_- u}^2 + \norm{X_+ u}^2 + \norm{Z(u)}^2 + \norm{\vd u}^2,
\]
and using that $\norm{Z(u)}^2 = \sum_{\l=0}^{\infty} \norm{Z(u_{\l})}^2$ by Lemma \ref{lemma_curvature_fourier_localization}.

Theorem \ref{thm_pestov_carleman_negativecurvature} can now be restated as a shifted Pestov identity.

\begin{Theorem} \label{thm_pestov_carleman_shifted}
Assume that that $(M,g)$ has sectional curvature $\leq -\kappa$ where $\kappa > 0$. For any $s > -1/2$ there is $C = C_{d,s,\kappa} > 0$ such that 
\[
\norm{\nabla_{SM} u}_{L^2_x H^s_v} \leq C \norm{\vd Xu}_{L^2_x H^s_v}
\]
for any $u \in C^{\infty}(SM, \mC^n)$ (with $u|_{\partial(SM)} = 0$ in the boundary case).
\end{Theorem}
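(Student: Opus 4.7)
The plan is to derive Theorem~\ref{thm_pestov_carleman_shifted} directly from Theorem~\ref{thm_pestov_carleman_negativecurvature}. Expanding the right-hand mixed norm,
\[
\norm{\vd Xu}_{L^2_x H^s_v}^2 = \sum_{l=0}^{\infty} \br{l}^{2s} \lambda_l \norm{(Xu)_l}^2,
\]
and using $\lambda_l = l(l+d-2)$, this is comparable with constants depending only on $d,s$ to $\sum_{l \geq 1} l^{2s+2} \norm{(Xu)_l}^2$, which is exactly the right-hand side of Theorem~\ref{thm_pestov_carleman_negativecurvature} taken with $m = 1$. So the problem reduces to bounding each of the four frequency pieces that make up $\norm{\nabla_{SM} u}_{L^2_x H^s_v}^2$ by either the LHS of Theorem~\ref{thm_pestov_carleman_negativecurvature} or a fixed multiple of its RHS.

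Three of the four pieces are essentially immediate. For the $\vd u_l$ term, the identity $\norm{\vd u_l}^2 = \lambda_l \norm{u_l}^2$ yields
\[
\sum_l \br{l}^{2s} \norm{\vd u_l}^2 \leq C_{d,s} \sum_{l\geq 1} l^{2s+2} \norm{u_l}^2,
\]
and the right-hand side appears on the LHS of Theorem~\ref{thm_pestov_carleman_negativecurvature}. The $Z(u_l)$ term at $l \geq 1$ is bounded directly by $\sum_{l\geq 1} l^{2s} \norm{Z(u_l)}^2$, while at $l = 0$ I would invoke the commutator $[X,\vd] = -\hd$ of Lemma~\ref{lemma_basic_commutator_identities}: combined with $\vd u_0 = 0$ it gives $\hd u_0 = \vd X u_0$, and comparison with~\eqref{hdau_decomposition} forces $Z(u_0) = 0$. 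For the $X_- u_{l+1}$ piece, I would reindex $l \mapsto l-1$ to get $\sum_{l\geq 1} \br{l-1}^{2s} \norm{X_- u_l}^2$; the $l = 1, 2$ contributions are absorbed by the boundary terms $2 l^{2s+1} \norm{X_- u_l}^2$ in Theorem~\ref{thm_pestov_carleman_negativecurvature}, and for $l \geq 3$ one uses $\br{l-1}^{2s} \sim_s (l-1)^{2s}$ together with the bulk $(2s+1) \sum_{l\geq 3} (l-1)^{2s} \norm{X_- u_l}^2$ term.

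The one piece not directly present in Theorem~\ref{thm_pestov_carleman_negativecurvature} is $\sum_l \br{l}^{2s} \norm{X_+ u_{l-1}}^2$. For this I would exploit the splitting $X : \Omega_{l-1} \to \Omega_{l-2} \oplus \Omega_l$ from Section~\ref{sec_preliminaries}, which gives $(Xu)_l = X_+ u_{l-1} + X_- u_{l+1}$, and apply the triangle inequality
\[
\norm{X_+ u_{l-1}}^2 \leq 2\norm{(Xu)_l}^2 + 2\norm{X_- u_{l+1}}^2.
\]
Multiplying by $\br{l}^{2s}$ and summing splits the $X_+$ contribution into two already-controlled sums: the first is bounded by $\norm{\vd Xu}_{L^2_x H^s_v}^2$ (since $\lambda_l \geq 1$ for $l \geq 1$ and $X_+ u_{-1} = 0$ makes the $l = 0$ term trivial), and the second reduces to the $X_-$ piece just treated.

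The main obstacle is bookkeeping rather than analysis: one has to verify that the low-frequency boundary quantities $X_+ u_{-1}$, $X_- u_0$, $\vd u_0$, and $Z(u_0)$ all vanish; that the weight mismatches between $l^{2s}$, $(l-1)^{2s}$, and $\br{l}^{2s}$ at $l = 1, 2$ are absorbed by the explicit finite pre-factors $2 l^{2s+1}$ on the LHS of Theorem~\ref{thm_pestov_carleman_negativecurvature}; and that the accumulated constants depend only on $d, s, \kappa$ as claimed. No new identity or extension of the Pestov machinery is required—all the analytic content is already carried by Theorem~\ref{thm_pestov_carleman_negativecurvature}.
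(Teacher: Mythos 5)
Your proposal is correct and follows essentially the same route as the paper: apply Theorem~\ref{thm_pestov_carleman_negativecurvature} with $m=1$, note that $Z(u_0)=0$, identify $\norm{\vd u_l}^2 = \lambda_l\norm{u_l}^2$ to absorb the $\br{l}^{2s}\norm{\vd u_l}^2$ piece into $\kappa\sum l^{2s+2}\norm{u_l}^2$, and recover the $X_+$ piece from $X_+u_{l-1} = (Xu)_l - X_-u_{l+1}$ and the triangle inequality. The only cosmetic divergence is how you justify $Z(u_0)=0$: you use the commutator $[X,\vd]=-\hd$ together with $\vd u_0 = 0$ and the decomposition~\eqref{hdau_decomposition}, whereas the paper reads this off directly from Proposition~\ref{prop_pestov_localized} at $l=0$ (using $\alpha_{-1}=\beta_1=0$ and $\vd u_0=0$); these are equivalent arguments, since Proposition~\ref{prop_pestov_localized} is itself a consequence of the same decomposition.
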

\begin{proof}
Theorem \ref{thm_pestov_carleman_negativecurvature} with the choice $m=1$ yields that 
\begin{align*}
\norm{X_- u_1}^2 + \norm{X_- u_2}^2 + (2s+1) \sum_{\l=2}^{\infty} \l^{2s} \norm{(X_- u)_{\l}}^2 + \kappa \sum_{\l=1}^{\infty} \l^{2s+2} \norm{u_{\l}}^2 \\
 + \sum_{\l=1}^{\infty} \l^{2s} \norm{Z(u_{\l})}^2 \leq \frac{C}{2s+1} \sum_{\l=2}^{\infty} \l^{2s+2} \norm{(Xu)_{\l}}^2 \leq \frac{C}{2s+1} \norm{\vd Xu}_{L^2_x H^s_v}^2.
\end{align*}
Note also that $Z(u_0) = 0$, which follows from Proposition \ref{prop_pestov_localized} with $\l=0$ (recall that $\beta_1 = 0$). Thus we have 
\[
 \sum_{\l=0}^{\infty} \br{\l}^{2s} (\norm{X_- u_{\l+1}}^2 + \norm{Z(u_{\l})}^2 + \norm{\vd u_{\l}}^2) \leq \frac{C}{2s+1} \norm{\vd Xu}_{L^2_x H^s_v}^2.
\]
Finally, note that 
\[
\norm{X_+ u}_{L^2_x H^s_v}^2 = \sum_{\l=1}^{\infty} \br{\l}^{2s} \norm{(Xu)_{\l} - X_- u_{\l+1}}^2 \leq C(\norm{\vd Xu}_{L^2_x H^s_v}^2 + \norm{X_- u}_{L^2_x H^s_v}^2).
\]
The result follows upon combining the last two inequalities.
\end{proof}

We now begin the proof of Theorem \ref{thm_pestov_carleman_negativecurvature}. The first step is to observe that the localized Pestov identity in Proposition \ref{prop_pestov_localized} gains a positive term in negative sectional curvature.

\begin{Lemma} \label{lemma_pestov_localized_negativecurvature}
Assume that $(M,g)$ has sectional curvature $\leq -\kappa$ where $\kappa > 0$, and let $\l \geq 0$. One has 
\[
\alpha_{\l-1} \norm{X_- u}^2 + \kappa \lambda_{\l} \norm{u}^2 + \norm{Z(u)}^2 \leq \beta_{\l+1} \norm{X_+ u}^2, \qquad u \in \Omega_{\l},
\]
if additionally $u|_{\partial(SM)} = 0$ in the boundary case. (We define $\alpha_{-1} = 0$.)
\end{Lemma}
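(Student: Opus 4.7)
The plan is to start from the frequency localized Pestov identity (Proposition \ref{prop_pestov_localized}) with the zero connection $A=0$, which reads
\[
\alpha_{l-1} \norm{X_- u}^2 - (R \vd u, \vd u) + \norm{Z(u)}^2 = \beta_{l+1} \norm{X_+ u}^2, \qquad u \in \Omega_l,
\]
and then show that the negative curvature assumption converts the curvature term into a positive term of the required form. Rearranging, it suffices to prove the pointwise/integrated bound
\[
-(R \vd u, \vd u) \ \geq \ \kappa \lambda_l \norm{u}^2 \qquad \text{for } u \in \Omega_l.
\]

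First, I would unpack the definition of the curvature operator $R(x,v):\{v\}^\perp \to \{v\}^\perp$, $R(x,v)w = R_x(w,v)v$. Using the symmetries of the Riemann tensor, for any unit $v$ and any $w \in \{v\}^\perp$ one has $\langle R(x,v) w, w \rangle_g = \langle R_x(w,v)v, w\rangle_g = K(v,w)\,\abs{w}_g^2$, where $K(v,w)$ denotes the sectional curvature of the plane spanned by $v$ and $w$. The hypothesis $K \leq -\kappa$ therefore gives the fibrewise inequality $\langle R(x,v) w, w \rangle_g \leq -\kappa\, \abs{w}_g^2$. Applying this with $w = \vd u(x,v) \in \{v\}^\perp$ and integrating over $SM$ against the Liouville measure yields
\[
-(R \vd u, \vd u) \ \geq \ \kappa \norm{\vd u}^2.
\]

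Second, I would evaluate $\norm{\vd u}^2$ on the eigenspace $\Omega_l$. Since $-\vdiv$ is the formal $L^2$-adjoint of $\vd$ and $\Delta = \vdiv \vd$, integration by parts (legal because all operators are vertical, so no boundary terms appear even in the boundary case) gives
\[
\norm{\vd u}^2 = (\vd u, \vd u) = (-\Delta u, u) = \lambda_l \norm{u}^2,
\]
using $-\Delta u = \lambda_l u$ on $\Omega_l$. Combining the two displays produces $-(R \vd u, \vd u) \geq \kappa \lambda_l \norm{u}^2$, and substituting back into the localized Pestov identity yields the claimed inequality.

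There is essentially no obstacle here; the only point worth care is the sign convention for $R$ versus the sectional curvature, and making sure that $\vd u(x,v)$ really lies in $\{v\}^\perp$ so that the pointwise curvature bound applies — both are built into the geometric preliminaries recalled in Section \ref{sec_preliminaries}. The extra term $\kappa \lambda_l \norm{u}^2$ produced this way is the single new ingredient that the negative curvature hypothesis contributes beyond the neutral identity of Proposition \ref{prop_pestov_localized}.
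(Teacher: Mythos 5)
Your proof is correct and follows exactly the route the paper takes: apply Proposition \ref{prop_pestov_localized} with $A=0$, use the fibrewise bound $\langle R(x,v)w,w\rangle = K(v,w)|w|^2 \leq -\kappa|w|^2$ to get $-(R\vd u,\vd u)\geq \kappa\norm{\vd u}^2$, and then use $\norm{\vd u}^2=(-\vdiv\vd u,u)=\lambda_l\norm{u}^2$ on $\Omega_l$. You have just spelled out the two intermediate observations (the sectional-curvature identification and the vertical integration by parts) that the paper compresses into a single displayed line.
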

\begin{proof}
This follows from Proposition \ref{prop_pestov_localized} upon choosing $A = 0$ and noting that 
\[
-(R \vd u, \vd u) \geq \kappa \norm{\vd u}^2 = \kappa (-\vdiv \vd u, u) = \kappa \lambda_{\l} \norm{u}^2. \qedhere
\]
\end{proof}

\begin{Remark}
The $\norm{Z(u)}^2$ term can be simplified when $d=2$. In this case, one has in the notation of \cite[Appendix B]{PSU_hd}
\[
Z(u) = -(X_{\perp} u)_0 iv = (i(\eta_+ u_{-1} - \eta_- u_1)) iv,
\]
which shows in particular that $Z(u_k) = 0$ unless $k=\pm 1$. To prove the above claim, assume that $d=2$ and note that, in the notation of \cite[Appendix B]{PSU_hd}, 
\[
\hd u = -(X_{\perp} u) iv, \qquad \vd a = (Va) iv.
\]
We may write $X_{\perp} u$ as 
\[
X_{\perp} u = (X_{\perp} u)_0 + Va, \qquad a = \sum_{k \neq 0} \frac{1}{ik} (X_{\perp} u)_k.
\]
It follows that 
\[
\hd u = \vd (-a) - (X_{\perp} u)_0 iv.
\]
Since $Z(u)$ was defined as the $\vdiv$-free part of $\hd u$, we must have $Z(u) = - (X_{\perp} u)_0 iv$.
\end{Remark}

The Carleman estimate in Theorem \ref{thm_pestov_carleman_negativecurvature} will follow after multiplying the localized estimates by suitable weights and adding them together. We first give an estimate with rather general weights. In what follows, $C^{\infty}_{F}(SM)$ denotes the set of smooth functions in $SM$ with finite degree.

\begin{Proposition} \label{prop_carleman_general_weight}
Assume that $(M,g)$ has sectional curvature $\leq -\kappa$ where $\kappa > 0$. If $m \geq 1$ and if $(\gamma_{\l})_{\l=0}^{\infty}$ is any sequence of positive real numbers satisfying 
\begin{equation} \label{gammal_condition}
\alpha_{\l} \gamma_{\l+1}^2 > \beta_{\l} \gamma_{\l-1}^2, \qquad \l \geq m+1,
\end{equation}
and if $(\delta_{\l})_{\l=0}^{\infty}$ is any sequence where $\delta_{\l} \in (0,1]$, then one has 
\begin{align*}
 &\sum_{\l=m}^{m+1} \alpha_{\l-1} \gamma_{\l}^2 \norm{X_- u_{\l}}^2 + \sum_{\l=m+2}^{\infty} (1-\delta_{\l-1})(\alpha_{\l-1} \gamma_{\l}^2 - \beta_{\l-1} \gamma_{\l-2}^2) \norm{X_- u_{\l}}^2+ \kappa \sum_{\l=m}^{\infty} \lambda_{\l} \gamma_{\l}^2 \norm{u_{\l}}^2 \\
 &\qquad + \sum_{\l=m}^{\infty} \gamma_{\l}^2 \norm{Z(u_{\l})}^2 \leq \sum_{\l=m+1}^{\infty} \left[ 1 + \frac{1-\delta_{\l}}{\delta_{\l}} \frac{\beta_{\l} \gamma_{\l-1}^2}{\alpha_{\l} \gamma_{\l+1}^2} \right] \frac{\alpha_{\l} \gamma_{\l+1}^2 \beta_{\l} \gamma_{\l-1}^2}{\alpha_{\l} \gamma_{\l+1}^2 - \beta_{\l} \gamma_{\l-1}^2} \norm{(Xu)_{\l}}^2. \qedhere
\end{align*}
whenever $u \in C^{\infty}_F(SM, \mC^n)$ (with $u|_{\partial(SM)} = 0$ in the boundary case).
\end{Proposition}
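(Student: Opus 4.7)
The plan is to derive the stated inequality by carefully summing the frequency-localized Pestov inequality of Lemma \ref{lemma_pestov_localized_negativecurvature} (which requires negative curvature) against the weights $\gamma_l^2$, and then using a weighted Young's inequality to rearrange the result.

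First I would apply Lemma \ref{lemma_pestov_localized_negativecurvature} to each Fourier mode $u_l \in \Omega_l$. Crucially, if $u|_{\partial(SM)}=0$ then each fibrewise projection $u_l$ also vanishes on $\partial(SM)$, so the boundary hypothesis is satisfied for every $l$. Multiplying by $\gamma_l^2$ and summing from $l=m$ to $\infty$ (a finite sum since $u \in C^{\infty}_F(SM,\mC^n)$) produces
\[
\sum_{l=m}^{\infty} \gamma_l^2 \bigl[\alpha_{l-1}\norm{X_- u_l}^2 + \kappa\lambda_l\norm{u_l}^2 + \norm{Z(u_l)}^2\bigr] \leq \sum_{l=m}^{\infty} \gamma_l^2 \beta_{l+1}\norm{X_+ u_l}^2.
\]
Reindexing the right side by $k=l+1$ and using the mapping properties $X_{\pm}\colon\Omega_j\to\Omega_{j\pm 1}$ to write $X_+ u_{k-1} = (Xu)_k - X_- u_{k+1}$ (which uses $(Xu)_k = X_+u_{k-1} + X_-u_{k+1}$), the right side becomes $\sum_{k=m+1}^{\infty}\gamma_{k-1}^2\beta_k\norm{(Xu)_k - X_- u_{k+1}}^2$.

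Next, for each $k\geq m+1$ I would apply Young's inequality with an adjustable parameter $s_k\in(0,1)$ in the form
\[
\norm{(Xu)_k - X_- u_{k+1}}^2 \leq \tfrac{1}{1-s_k}\norm{(Xu)_k}^2 + \tfrac{1}{s_k}\norm{X_- u_{k+1}}^2.
\]
Reindexing the $X_-$ contribution via $l = k+1$ produces terms $\frac{\gamma_{l-2}^2\beta_{l-1}}{s_{l-1}}\norm{X_- u_l}^2$ for $l\geq m+2$, which I move back to the left side. The $l=m$ and $l=m+1$ terms on the left have no partner to absorb, so they retain their full coefficient $\alpha_{l-1}\gamma_l^2$, which is exactly the form appearing separately in the statement.

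Finally, I would choose the free parameter for $k\geq m+1$ to be
\[
s_k = \frac{\gamma_{k-1}^2\beta_k}{\delta_k\alpha_k\gamma_{k+1}^2 + (1-\delta_k)\beta_k\gamma_{k-1}^2},
\]
so that the absorbed coefficient for $\norm{X_- u_l}^2$ ($l\geq m+2$) equals $(1-\delta_{l-1})(\alpha_{l-1}\gamma_l^2 - \beta_{l-1}\gamma_{l-2}^2)$. Hypothesis \eqref{gammal_condition} guarantees $s_k\in(0,1)$ (and in particular keeps both absorption and the Young step legitimate). A direct simplification of $\gamma_{k-1}^2\beta_k/(1-s_k)$ produces precisely the bracketed factor multiplying $\norm{(Xu)_k}^2$ in the proposition. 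The main obstacle is really just this piece of algebra: designing $s_k$ so that both the absorbed $X_-$ coefficient and the residual $(Xu)$ coefficient take the clean two-parameter form in $\gamma_l$ and $\delta_l$ promised in the statement; once $s_k$ is identified, verification is a short calculation, and hypothesis \eqref{gammal_condition} appears naturally as the exact positivity needed for the scheme to close.
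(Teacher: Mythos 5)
Your argument is correct and is essentially the same proof as the paper's. The only cosmetic difference is the parametrization of the Young step: you use $\norm{a-b}^2 \le \tfrac{1}{1-s_k}\norm{a}^2 + \tfrac{1}{s_k}\norm{b}^2$ with $s_k\in(0,1)$, while the paper uses $(1+\tfrac{1}{\eps_l})\norm{a}^2+(1+\eps_l)\norm{b}^2$ with $\eps_l>0$; these are related by $s=1/(1+\eps)$, and your choice of $s_k$ corresponds exactly to the paper's choice $\eps_{l-2}=\delta_{l-1}\bigl[\tfrac{\alpha_{l-1}}{\beta_{l-1}}\tfrac{\gamma_l^2}{\gamma_{l-2}^2}-1\bigr]$, producing the same absorbed $X_-$ coefficient and the same bracketed $(Xu)_l$ coefficient after simplification. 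You also sum directly to $\infty$ rather than to a finite $N$ followed by a limit; both are fine since $u$ has finite degree, which makes all sums finite and the tail boundary terms vanish.
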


\begin{Remark}
The sequence $(\gamma_{\l})$ corresponds to the weights in the Carleman estimate, and the parameters $(\delta_{\l})$ fine-tune the weights for the $X_-$ terms on the left. Later we will essentially choose $\gamma_{\l} = \l^s$ for $s > -1/2$ and $\delta_{\l} \equiv 1/2$. An even simpler choice, which would also be sufficient for most of our purposes, would be to take $\delta_{\l} \equiv 1$. Then the above estimate becomes (after dropping the $X_- u_{\l}$ and $Z(u_{\l})$ terms) 
\[
\kappa \sum_{\l=m}^{\infty} \lambda_{\l} \gamma_{\l}^2 \norm{u_{\l}}^2 
 \leq \sum_{\l=m+1}^{\infty} \frac{\alpha_{\l} \gamma_{\l+1}^2 \beta_{\l} \gamma_{\l-1}^2}{\alpha_{\l} \gamma_{\l+1}^2 - \beta_{\l} \gamma_{\l-1}^2} \norm{(Xu)_{\l}}^2
\]
for any $u \in C^{\infty}_F(SM)$ (with $u|_{\partial(SM)} = 0$ in the boundary case)
\end{Remark}

\begin{proof}[Proof of Proposition \ref{prop_carleman_general_weight}]
Let $u \in C^{\infty}_F(SM, \mC^n)$, and let $(\gamma_{\l})_{\l=0}^{\infty}$ be a  sequence of positive real numbers satisfying \eqref{gammal_condition}. Lemma \ref{lemma_pestov_localized_negativecurvature} implies that for each $\l \geq 0$, 
\[
\alpha_{\l-1} \gamma_{\l}^2 \norm{X_- u_{\l}}^2 + \kappa \lambda_{\l} \gamma_{\l}^2 \norm{u_{\l}}^2 + \gamma_{\l}^2 \norm{Z(u_{\l})}^2 \leq \beta_{\l+1} \gamma_{\l}^2 \norm{X_+ u_{\l}}^2.
\]
Let now $\eps_{\l} > 0$ be positive numbers, and observe that 
\begin{align}
\norm{X_+ u_{\l}}^2 &= \norm{(Xu)_{\l+1} - X_- u_{\l+2}}^2 = \norm{(Xu)_{\l+1}}^2 - 2 \,\mathrm{Re} ((Xu)_{\l+1}, X_- u_{\l+2}) + \norm{X_- u_{\l+2}}^2 \notag \\
 &\leq \left(1+\frac{1}{\eps_{\l}}\right) \norm{(Xu)_{\l+1}}^2 + (1+\eps_{\l}) \norm{X_- u_{\l+2}}^2. \label{xplusul_epsl_estimate}
\end{align}
Combining the last two inequalities gives the estimate 
\begin{multline*}
\alpha_{\l-1} \gamma_{\l}^2 \norm{X_- u_{\l}}^2 + \kappa \lambda_{\l} \gamma_{\l}^2 \norm{u_{\l}}^2 + \gamma_{\l}^2 \norm{Z(u_{\l})}^2 \\
 \leq \beta_{\l+1} \gamma_{\l}^2 \left(1+\frac{1}{\eps_{\l}}\right) \norm{(Xu)_{\l+1}}^2 + \beta_{\l+1} \gamma_{\l}^2 (1+\eps_{\l}) \norm{X_- u_{\l+2}}^2.
\end{multline*}
Adding up these estimates for $m \leq \l \leq N$, where $m \geq 1$, yields 
\begin{multline}
\sum_{\l=m}^{N} \alpha_{\l-1} \gamma_{\l}^2 \norm{X_- u_{\l}}^2 + \sum_{\l=m}^{N} \kappa \lambda_{\l} \gamma_{\l}^2 \norm{u_{\l}}^2 + \sum_{\l=m}^{N} \gamma_{\l}^2 \norm{Z(u_{\l})}^2 \\
 \leq \sum_{\l=m+1}^{N+1} \beta_{\l} \gamma_{\l-1}^2 \left(1+\frac{1}{\eps_{\l-1}}\right) \norm{(Xu)_{\l}}^2 + \sum_{\l=m+2}^{N+2} \beta_{\l-1} \gamma_{\l-2}^2 (1+\eps_{\l-2}) \norm{X_- u_{\l}}^2. \label{pestov_localized_summed_negativecurvature}
\end{multline}
We would like to choose $(\gamma_{\l})$ and $(\eps_{\l})$ so that a large part of the last term on the right can be absorbed in the first term on the left. The minimal requirement is that 
\[
\beta_{\l-1} \gamma_{\l-2}^2 (1+\eps_{\l-2}) \leq \alpha_{\l-1} \gamma_{\l}^2, \qquad m+2 \leq \l \leq N.
\]
We will choose $(\eps_{\l})$ so that 
\begin{equation} \label{epsl_condition}
\eps_{\l-2} = \delta_{\l-1} \left[ \frac{\alpha_{\l-1}}{\beta_{\l-1}} \frac{\gamma_{\l}^2}{\gamma_{\l-2}^2} - 1 \right], \qquad \l \geq m+2,
\end{equation}
where $\delta_{\l-1} \in (0,1]$. In order for $\eps_{\l}$ to be positive, we need to have $\alpha_{\l-1} \gamma_{\l}^2 > \beta_{\l-1} \gamma_{\l-2}^2$ for $\l \geq m+2$, which follows from the assumption \eqref{gammal_condition}.

Using \eqref{epsl_condition}, we may express the weights on the right hand side of \eqref{pestov_localized_summed_negativecurvature} as 
\[
\beta_{\l-1} \gamma_{\l-2}^2 (1+\eps_{\l-2}) = \beta_{\l-1} \gamma_{\l-2}^2 + \delta_{\l-1}(\alpha_{\l-1} \gamma_{\l}^2 - \beta_{\l-1} \gamma_{\l-2}^2)
\]
and 
\begin{align*}
\beta_{\l} \gamma_{\l-1}^2 \left(1+\frac{1}{\eps_{\l-1}}\right) &= \beta_{\l} \gamma_{\l-1}^2 \left[ 1+\frac{1}{\delta_{\l}} \frac{\beta_{\l} \gamma_{\l-1}^2}{\alpha_{\l} \gamma_{\l+1}^2 - \beta_{\l} \gamma_{\l-1}^2} \right] \\
 &= \frac{\alpha_{\l} \gamma_{\l+1}^2 \beta_{\l} \gamma_{\l-1}^2}{\alpha_{\l} \gamma_{\l+1}^2 - \beta_{\l} \gamma_{\l-1}^2} \left[ 1 + \frac{1-\delta_{\l}}{\delta_{\l}} \frac{\beta_{\l} \gamma_{\l-1}^2}{\alpha_{\l} \gamma_{\l+1}^2} \right].
\end{align*}
Inserting these expressions in \eqref{pestov_localized_summed_negativecurvature}, it follows that 
\begin{align*}
 &\sum_{\l=m}^{m+1} \alpha_{\l-1} \gamma_{\l}^2 \norm{X_- u_{\l}}^2 + \sum_{\l=m+2}^{N} (1-\delta_{\l-1})(\alpha_{\l-1} \gamma_{\l}^2 - \beta_{\l-1} \gamma_{\l-2}^2) \norm{X_- u_{\l}}^2+ \kappa \sum_{\l=m}^{N} \lambda_{\l} \gamma_{\l}^2 \norm{u_{\l}}^2 \\
 &\qquad + \sum_{\l=m}^{N} \gamma_{\l}^2 \norm{Z(u_{\l})}^2 \leq \sum_{\l=m+1}^{N+1} \left[ 1 + \frac{1-\delta_{\l}}{\delta_{\l}} \frac{\beta_{\l} \gamma_{\l-1}^2}{\alpha_{\l} \gamma_{\l+1}^2} \right] \frac{\alpha_{\l} \gamma_{\l+1}^2 \beta_{\l} \gamma_{\l-1}^2}{\alpha_{\l} \gamma_{\l+1}^2 - \beta_{\l} \gamma_{\l-1}^2} \norm{(Xu)_{\l}}^2 \\
 &\qquad \qquad + \sum_{\l=N+1}^{N+2} \left[ \beta_{\l-1} \gamma_{\l-2}^2 + \delta_{\l-1}(\alpha_{\l-1} \gamma_{\l}^2 - \beta_{\l-1} \gamma_{\l-2}^2)\right] \norm{X_- u_{\l}}^2.
\end{align*}
Since $u$ has finite degree we can take the limit as $N \to \infty$, which proves the result.
\end{proof}

We will next prove Theorem \ref{thm_pestov_carleman_negativecurvature} 
by choosing suitable weights in Proposition \ref{prop_carleman_general_weight}. The proof will involve two elementary lemmas.

\begin{Lemma} \label{lemma_elementary_lower_bound}
If $s \geq 0$, then 
\[
(\l+1)^s - (\l-1)^s \geq s \l^{s-1}, \qquad \l \geq 1.
\]
\end{Lemma}
\begin{proof}
Writing $x = 1/\l$ and using that $\log(1-x) \leq -x$ and $\log(1+x) \geq x \log(2)$ for $x \in (0,1)$, we obtain 
\[
(1+x)^s-(1-x)^s \geq 2 \sinh(sx \log(2)) \geq 2 \log(2) sx.
\]
Since $2 \log(2) \geq 1$, the result follows.
\end{proof}

\begin{Remark}
By using the generalized binomial theorem, one can show that the sharp constant on the right hand side of Lemma \ref{lemma_elementary_lower_bound} is $\min(2s, 2^s)$.
\end{Remark}

\begin{Lemma} \label{lemma_factorial_estimate}
For any $\l \geq 2$ one has 
\[
\frac{1}{4 \sqrt{\l}} \leq \prod_{j=0}^{[\frac{\l-2}{2}]} \frac{\l-1-2j}{\l-2j} \leq \frac{4}{\sqrt{\l}}.
\]
\end{Lemma}
\begin{proof}
We do the proof for $\l = 2N$ even (the odd case is similar). Note that 
\[
\frac{2N-1}{2N} \  \frac{2N-3}{2N-2} \, \cdots \, \frac{1}{2} = \frac{(2N)!}{2^{2N} (N!)^2}.
\]
The Stirling approximation $\sqrt{2\pi N} \left( \frac{N}{e} \right)^N \leq N! \leq \sqrt{2\pi N} \left( \frac{N}{e} \right)^N e^{\frac{1}{12 N}}$ yields that 
\[
e^{-\frac{1}{6 N}} \frac{\sqrt{2\pi \cdot 2N}}{(\sqrt{2\pi N})^2} \leq \frac{(2N)!}{2^{2N} (N!)^2} \leq \frac{\sqrt{2\pi \cdot 2N}}{(\sqrt{2\pi N})^2} e^{\frac{1}{24 N}}.
\]
This proves the result.
\end{proof}

\begin{proof}[Proof of Theorem \ref{thm_pestov_carleman_negativecurvature}]
Define a sequence $(\ol{\gamma}_\l)$ by 
\[
\ol{\gamma}_1 = \ol{\gamma}_2 = 1, \qquad \ol{\gamma}_{\l+1}^2 = \frac{\beta_\l}{\alpha_\l} \ol{\gamma}_{\l-1}^2 \text{ for $\l \geq 2$}.
\]
Choose $\gamma_{\l} = \ol{\gamma}_\l \l^{\sigma/2}$ in Proposition \ref{prop_carleman_general_weight}, where $\sigma > 0$. By Lemma \ref{lemma_elementary_lower_bound} we have  
\begin{equation} \label{weight_est1}
\alpha_{\l} \gamma_{\l+1}^2 - \beta_{\l} \gamma_{\l-1}^2 = \beta_{\l} \ol{\gamma}_{\l-1}^2 \left[ (\l+1)^{\sigma} - (\l-1)^{\sigma} \right] \geq \beta_{\l} \ol{\gamma}_{\l-1}^2 \left[ \sigma \l^{\sigma-1} \right]
\end{equation}
and 
\begin{equation} \label{weight_est2}
\frac{\alpha_{\l} \gamma_{\l+1}^2 \beta_{\l} \gamma_{\l-1}^2}{\alpha_{\l} \gamma_{\l+1}^2 - \beta_{\l} \gamma_{\l-1}^2} \leq \frac{(\beta_{\l} \ol{\gamma}_{\l-1}^2)^2 (\l^2-1)^{\sigma}}{\beta_{\l} \ol{\gamma}_{\l-1}^2 \left[ \sigma \l^{\sigma-1} \right]} \leq \beta_\l \ol{\gamma}_{\l-1}^2 \frac{\l^{\sigma+1}}{\sigma}.
\end{equation}

We will next estimate $\ol{\gamma}_{\l-1}^2$. By definition we have 
\[
\ol{\gamma}_{\l+1}^2 = \frac{\beta_{\l}}{\alpha_{\l}} \ol{\gamma}_{\l-1}^2 = \ldots = \prod_{j=0}^{[\frac{\l-2}{2}]} \frac{\beta_{\l-2j}}{\alpha_{\l-2j}}.
\]
Using the definition of $\alpha_\l$ and $\beta_\l$ we further have 
\[
\frac{\beta_{\l}}{\alpha_{\l}} = \frac{(\l-1)(\l+d-2)}{\l(\l+d-1)}.
\]
Writing 
\[
g_\l = \prod_{j=0}^{[\frac{\l-2}{2}]} \frac{\l-1-2j}{\l-2j},
\]
it follows that 
\[
\ol{\gamma}_{\l+1}^2 = \frac{g_\l g_{\l+d-2}}{g_{\tilde{d}}}
\]
where $\tilde{d} = d-1$ if $d$ is even, and $\tilde{d} = d$ if $d$ is odd. Using Lemma \ref{lemma_factorial_estimate} we obtain 
\begin{equation} \label{weight_est3}
\frac{c}{\l} \leq \ol{\gamma}_{\l}^2 \leq \frac{C}{\l}, \qquad \l \geq 1,
\end{equation}
where $c$ and $C$ only depend on $d$.

We now choose $\delta_\l = 1/2$ in Proposition \ref{prop_carleman_general_weight} and observe, using \eqref{weight_est1}--\eqref{weight_est3}, that for $\l \geq 1$ one has 
\begin{align*}
\alpha_{\l-1} \gamma_{\l}^2 &\geq c \l^{\sigma}, \\
\alpha_{\l-1} \gamma_{\l}^2 - \beta_{\l-1} \gamma_{\l-2}^2 &\geq c \sigma (\l-1)^{\sigma-1}, \\
\lambda_\l \gamma_{\l}^2 &\geq c \l^{\sigma+1}, \\
\gamma_{\l}^2 &\geq c \l^{\sigma-1},
\end{align*}
and 
\begin{align*}
\frac{\beta_\l \gamma_{\l-1}^2}{\alpha_\l \gamma_{\l+1}^2} &\leq 1, \\
\frac{\alpha_{\l} \gamma_{\l+1}^2 \beta_{\l} \gamma_{\l-1}^2}{\alpha_{\l} \gamma_{\l+1}^2 - \beta_{\l} \gamma_{\l-1}^2} &\leq C \frac{\l^{\sigma+1}}{\sigma}
\end{align*}
for $C, c > 0$ depending on $d$. The theorem follows upon choosing $\sigma = 2s+1$.
\end{proof}

\section{Carleman estimates in nonpositive curvature} \label{sec_nonpositive_curvature}

In this section we discuss Carleman estimates in the case of nonpositive sectional curvature instead of strictly negative sectional curvature. The extra positive term in Lemma \ref{lemma_pestov_localized_negativecurvature} becomes zero in the case, but one gains some positivity from the fact that there are no conjugate points (in a suitable strong sense). Thus, in this section $(M,g)$ will be a compact Riemannian manifold with or without boundary, and we will assume that $(M,g)$ is simple/Anosov with nonpositive sectional curvature.

The main result is the following Carleman estimate with linear instead of logarithmic weights.

\begin{Theorem} \label{thm_pestov_carleman_nonpositivecurvature}
Let $(M,g)$ be a simple/Anosov manifold having nonpositive sectional curvature. There exist $m_0, \tau_0, \kappa > 0$ such that for any $\tau \geq \tau_0$, for any $m \geq m_0$ and for any $u \in C^{\infty}_F(SM, \mC^n)$ (with $u|_{\partial(SM)} = 0$ in the boundary case) one has 
\[
\sum_{\l=m}^{\infty} e^{2\tau \phi_\l} \norm{u_\l}^2 \leq \frac{24}{\kappa e^{2 \tau}} \sum_{\l=m+1}^{\infty} e^{2\tau \phi_\l} \norm{(Xu)_\l}^2
\]
where $\phi_\l = \l$.
\end{Theorem}

The constant $\kappa > 0$ in the above theorem comes from the fact that compact/simple manifolds with nonpositive curvature have no conjugate points. It is the same constant that appears in the following lemma
proved in \cite[Theorem 7.2 and Lemma 11.2]{PSU_hd}:

\begin{Lemma} \label{lemma_simple_anosov_controlled} 
If $(M,g)$ is a compact simple/Anosov manifold, there exist $\alpha \in (0,1]$ and $\kappa > 0$ so that 
\[
\norm{XZ}^2 - (R Z, Z) \geq \alpha\norm{XZ}^2 +\kappa \norm{Z}^2
\]
for any $Z \in \mathcal{Z}^n$ (with $Z|_{\partial(SM)} = 0$ in the boundary case).
\end{Lemma}


We begin the proof of Theorem \ref{thm_pestov_carleman_nonpositivecurvature} with the following result, which is a counterpart of Lemma \ref{lemma_pestov_localized_negativecurvature}.

\begin{Lemma} \label{lemma_pestov_localized_nonpositivecurvature}
Let $(M,g)$ be a simple/Anosov manifold having nonpositive sectional curvature. There is $\kappa > 0$ such that for any $\l \geq 1$ and for any $\sigma_\l$ with $0 \leq \sigma_\l \leq 1$ one has 
\begin{multline*}
\left( \alpha_{\l-1} - \sigma_\l \lambda_{\l-1}\left(1+\frac{1}{\l+d-3}\right)^2 \right) \norm{X_- u}^2 + \sigma_\l \kappa \lambda_\l \norm{u}^2 \\
 \leq \left(\beta_{\l+1} + \sigma_\l \lambda_{\l+1}\left(1-\frac{1}{\l+1}\right)^2\right) \norm{X_+ u}^2, \qquad u \in \Omega_\l,
\end{multline*}
if additionally $u|_{\partial(SM)} = 0$ in the boundary case.
\end{Lemma}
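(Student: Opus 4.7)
The plan is to combine the frequency localized Pestov identity (Proposition \ref{prop_pestov_localized} with $A=0$) with the no-conjugate-points positivity estimate from Lemma \ref{lemma_simple_anosov_controlled}. On $\Omega_l$, Proposition \ref{prop_pestov_localized} with $A=0$ gives
\[
\alpha_{l-1} \norm{X_- u}^2 + \norm{Z(u)}^2 - (R \vd u, \vd u) = \beta_{l+1} \norm{X_+ u}^2, \qquad (\star)
\]
since $F_0 = 0$.

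The key auxiliary computation is to express $\norm{X \vd u}^2$ in terms of $\norm{X_\pm u}^2$ and $\norm{Z(u)}^2$. Using $[X,\vd] = -\hd$ from Lemma \ref{lemma_basic_commutator_identities} together with the decomposition \eqref{hdau_decomposition} (for $A=0$) specialized to $u \in \Omega_l$, the only surviving terms in the sum are those with $l'=l+1$ and $l'=l-1$, so
\[
X \vd u = \vd\!\left[\bigl(1 - \tfrac{1}{l+1}\bigr) X_+ u + \bigl(1 + \tfrac{1}{l+d-3}\bigr) X_- u\right] - Z(u).
\]
Since $\vd X_+ u$ lives over $\Omega_{l+1}$ and $\vd X_- u$ lives over $\Omega_{l-1}$, and since $Z(u)$ is $\vdiv$-free and therefore $L^2$-orthogonal in $\mathcal Z^n$ to any vertical gradient, the three summands on the right are mutually orthogonal. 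Combined with $\norm{\vd f}^2 = \lambda_m \norm{f}^2$ for $f \in \Omega_m$, this yields
\[
\norm{X \vd u}^2 = \bigl(1 - \tfrac{1}{l+1}\bigr)^2 \lambda_{l+1} \norm{X_+ u}^2 + \bigl(1 + \tfrac{1}{l+d-3}\bigr)^2 \lambda_{l-1} \norm{X_- u}^2 + \norm{Z(u)}^2.
\]
Applying Lemma \ref{lemma_simple_anosov_controlled} with $A=0$ to $Z = \vd u$ (in its weaker form with $\alpha=0$) gives
\[
\norm{X \vd u}^2 - (R \vd u, \vd u) \geq \kappa \lambda_l \norm{u}^2. \qquad (\star\star)
\]

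The desired inequality then follows by forming $-(\star) + \sigma_l \cdot (\star\star)$. The coefficient of $-(R \vd u, \vd u)$ becomes $(1-\sigma_l) \geq 0$, and nonpositive curvature gives $-(R \vd u, \vd u) \geq 0$; similarly the coefficient of $\norm{Z(u)}^2$ is $(\sigma_l - 1) \leq 0$, so after rearranging to place the nonnegative $\norm{X_+ u}^2$ term on the right, both the $Z$ and the curvature contributions can be dropped as nonnegative quantities from the left. What remains is exactly
\[
\bigl(\alpha_{l-1} - \sigma_l \lambda_{l-1}(1+\tfrac{1}{l+d-3})^2\bigr) \norm{X_- u}^2 + \sigma_l \kappa \lambda_l \norm{u}^2 \leq \bigl(\beta_{l+1} + \sigma_l \lambda_{l+1}(1-\tfrac{1}{l+1})^2\bigr) \norm{X_+ u}^2.
\]

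The main (minor) obstacle is bookkeeping in the degenerate case $l=1$, $d=2$, where $l+d-3 = 0$ makes the coefficient $(1+\tfrac{1}{l+d-3})^2$ formally infinite. However in that situation $\lambda_{l-1} = \lambda_0 = 0$, so $\vd X_- u = 0$ on $\Omega_1$, the sum in \eqref{hdau_decomposition} truncates correctly (the $l'=l-1=0$ term is absent), and the product $\lambda_{l-1}(1+\tfrac{1}{l+d-3})^2$ is interpreted as $0$; the argument above then goes through with the $X_-$ term effectively dropping out.
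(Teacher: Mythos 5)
Your proof is correct and uses exactly the same ingredients as the paper's: the frequency-localized Pestov identity with $A=0$ (Proposition \ref{prop_pestov_localized}), the positivity estimate from Lemma \ref{lemma_simple_anosov_controlled} applied to $Z = \vd u$ with the $\alpha$ term discarded, the decomposition
\[
\norm{X\vd u}^2 = \Bigl(1-\tfrac{1}{l+1}\Bigr)^2\lambda_{l+1}\norm{X_+u}^2 + \Bigl(1+\tfrac{1}{l+d-3}\Bigr)^2\lambda_{l-1}\norm{X_-u}^2 + \norm{Z(u)}^2
\]
derived from \eqref{hdau_decomposition}, and nonpositive curvature to discard $-(1-\sigma_l)(R\vd u,\vd u)\geq 0$ along with $(1-\sigma_l)\norm{Z(u)}^2\geq 0$. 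The only cosmetic difference is that the paper first interpolates the two lower bounds for $\norm{X\vd u}^2 - (R\vd u,\vd u)$ (nonpositive curvature versus Lemma \ref{lemma_simple_anosov_controlled}) with weights $(1-\sigma_l)$ and $\sigma_l$, then substitutes into the unlocalized Pestov identity and specializes to $\Omega_l$ at the end, while you work on $\Omega_l$ from the start and form the linear combination afterwards; the arithmetic is identical. Your side remark about the degenerate case $l=1$, $d=2$ is a welcome clarification that the paper leaves implicit (it also applies for $l=1$ and $d\geq 3$, where $\lambda_0 = 0$ again annihilates the $X_-$ contribution since $\vd X_-u = 0$ on $\Omega_0$).
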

\begin{proof}
We use the standard Pestov identity (Lemma \ref{lemma_pestov_general_connection}) with $A = 0$: 
\[
\norm{X \vd u}^2 - (R \vd u, \vd u) + (d-1) \norm{Xu}^2 = \norm{\vd X u}^2.
\]
The nonpositive curvature assumption implies that 
\[
\norm{X \vd u}^2 - (R \vd u, \vd u) \geq \norm{X \vd u}^2.
\]
On the other hand, the simple/Anosov assumption together with Lemma \ref{lemma_simple_anosov_controlled} implies that there is $\kappa > 0$ (we omit the $\alpha \norm{X \vd u}^2$ term) such that 
\[
\norm{X \vd u}^2 - (R \vd u, \vd u) \geq \kappa \norm{\vd u}^2.
\]
Interpolating the last two inequalities yields that for any $\sigma_\l$ with $0 \leq \sigma_\l \leq 1$, 
\[
\norm{X \vd u}^2 - (R \vd u, \vd u) \geq (1-\sigma_\l) \norm{X \vd u}^2 + \sigma_\l \kappa \norm{\vd u}^2.
\]
Inserting this estimate in the Pestov identity implies that 
\begin{equation} \label{pestov_sigmal_localized}
(1-\sigma_\l) \norm{X \vd u}^2 + \sigma_\l \kappa \norm{\vd u}^2 \leq \norm{\vd X u}^2 - (d-1) \norm{Xu}^2.
\end{equation}

We now assume that $u = u_\l \in \Omega_\l$ and simplify \eqref{pestov_sigmal_localized} as in the proof of Lemma \ref{lemma_pestov_xplusminus_generalconnection}. This gives 
\[
\alpha_{\l-1} \norm{X_- u_\l}^2 + \norm{Z(u_\l)}^2 + \sigma_\l \kappa \norm{\vd u}^2 \leq \beta_{\l+1} \norm{X_+ u_\l}^2 + \sigma_\l \norm{X \vd u}^2
\]
Again as in the proof of Lemma \ref{lemma_pestov_xplusminus_generalconnection}, one has 
\begin{align*}
\norm{X \vd u_\l}^2 &= \norm{\vd \left[ \left(1-\frac{1}{\l+1}\right) X_+ u_\l + \left(1+\frac{1}{\l+d-3}\right) X_- u_\l \right] - Z(u_\l)}^2 \\
 &= \lambda_{\l+1}\left(1-\frac{1}{\l+1}\right)^2 \norm{X_+ u_\l}^2 + \lambda_{\l-1}\left(1+\frac{1}{\l+d-3}\right)^2 \norm{X_- u_\l}^2 + \norm{Z(u_\l)}^2.
\end{align*}
The result follows by combining the last two formulas and using the trivial estimate $(1-\sigma_\l) \norm{Z(u_\l)}^2 \geq 0$.
\end{proof}

The estimate in Lemma \ref{lemma_pestov_localized_nonpositivecurvature} is of course only useful when the coefficient in front of $\norm{X_- u}^2$ is nonnegative. For large $\l$ one has 
\[
\alpha_{\l-1} - \sigma_\l \lambda_{\l-1} \left(1+\frac{1}{\l+d-3} \right)^2 \sim 2\l - \sigma_\l \l^2,
\]
so one needs to have $\sigma_\l \leq \frac{2}{\l}$ for $\l$ large. (If one keeps the $\alpha \norm{X \vd u}^2$ that was omitted in the proof of Lemma \ref{lemma_pestov_localized_nonpositivecurvature}, the previous condition becomes $\sigma_\l \leq \frac{2}{(1-\alpha)\l}$ so one still needs $\sigma_\l \lesssim \l^{-1}$ for $\l$ large.)

We next give the counterpart of Proposition \ref{prop_carleman_general_weight} (for $\delta_\l \equiv 1$), which corresponds to a Carleman estimate with general weights.

\begin{Proposition} \label{prop_carleman_general_weight_nonpositive_curvature}
Assume that $(M,g)$ is a simple/Anosov manifold of nonpositive sectional curvature. If $(\gamma_\l)$ and $(\sigma_\l)$ are sequences satisfying $\gamma_\l > 0$, $0 < \sigma_\l < 1$, and 
\begin{equation} \label{gammal_condition_nonpositive_curvature}
\left(\alpha_{\l} - \sigma_{\l+1} \lambda_{\l}\left(1+\frac{1}{\l+d-2}\right)^2 \right) \gamma_{\l+1}^2 > \left(\beta_{\l} + \sigma_{\l-1} \lambda_{\l}\left(1-\frac{1}{\l}\right)^2 \right) \gamma_{\l-1}^2
\end{equation}
then one has 
\begin{multline*}
\kappa \sum_{\l=m}^{\infty} \sigma_\l \lambda_\l \gamma_\l^2 \norm{u_\l}^2 \\
 \leq \sum_{\l=m+1}^{\infty} \frac{(\alpha_{\l} - \sigma_{\l+1} \lambda_{\l}(1+\frac{1}{\l+d-2})^2) \gamma_{\l+1}^2 (\beta_{\l} + \sigma_{\l-1} \lambda_{\l}(1-\frac{1}{\l})^2) \gamma_{\l-1}^2}{(\alpha_{\l} - \sigma_{\l+1} \lambda_{\l}(1+\frac{1}{\l+d-2})^2) \gamma_{\l+1}^2 - (\beta_{\l} + \sigma_{\l-1} \lambda_{\l}(1-\frac{1}{\l})^2) \gamma_{\l-1}^2} \norm{(Xu)_{\l}}^2
\end{multline*}
whenever $m \geq 1$ and $u \in C^{\infty}_F(SM, \mC^n)$ (with $u|_{\partial(SM)} = 0$ in the boundary case).
\end{Proposition}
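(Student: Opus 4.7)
The proof will proceed in close parallel to that of Proposition \ref{prop_carleman_general_weight}, replacing the localized identity of Lemma \ref{lemma_pestov_localized_negativecurvature} by the nonpositive curvature version from Lemma \ref{lemma_pestov_localized_nonpositivecurvature}, and working throughout with the simpler choice $\delta_l\equiv 1$ (i.e.\ absorbing \emph{all} of the $X_-$ terms into the left-hand side, keeping none of them). First I would apply Lemma \ref{lemma_pestov_localized_nonpositivecurvature} to $u_l\in\Omega_l$ for each $l\ge m$, multiply the resulting estimate by $\gamma_l^2$, and use the triangle inequality
\[
\norm{X_+ u_l}^2=\norm{(Xu)_{l+1}-X_-u_{l+2}}^2\le\Bigl(1+\tfrac{1}{\eps_l}\Bigr)\norm{(Xu)_{l+1}}^2+(1+\eps_l)\norm{X_-u_{l+2}}^2
\]
for a parameter $\eps_l>0$ still to be chosen, exactly as in \eqref{xplusul_epsl_estimate}.

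Next I would sum these estimates for $m\le l\le N$ (the truncation is allowed because $u$ has finite degree, so the sums are in fact finite). After re-indexing $j=l+2$ in the $X_-$ term on the right, the estimate takes the form
\[
\sum_{l=m}^{N}\!C^-_l\gamma_l^2\norm{X_-u_l}^2+\kappa\sum_{l=m}^{N}\sigma_l\lambda_l\gamma_l^2\norm{u_l}^2
\le \sum_{l=m+1}^{N+1}\!C^+_{l-1}\gamma_{l-1}^2\bigl(1+\tfrac{1}{\eps_{l-1}}\bigr)\norm{(Xu)_l}^2+\sum_{j=m+2}^{N+2}\!C^+_{j-2}\gamma_{j-2}^2(1+\eps_{j-2})\norm{X_-u_j}^2
\]
where I write
\[
C^-_l=\alpha_{l-1}-\sigma_l\lambda_{l-1}\Bigl(1+\tfrac{1}{l+d-3}\Bigr)^{\!2},\qquad C^+_l=\beta_{l+1}+\sigma_l\lambda_{l+1}\Bigl(1-\tfrac{1}{l+1}\Bigr)^{\!2}.
\]
The condition \eqref{gammal_condition_nonpositive_curvature} can be rewritten, after the shift $l\mapsto l-1$ in its right-hand index, as $C^+_{l-1}\gamma_{l-1}^2<C^-_{l+1}\gamma_{l+1}^2$ for all relevant $l$. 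I would therefore choose
\[
1+\eps_{l-1}=\frac{C^-_{l+1}\gamma_{l+1}^2}{C^+_{l-1}\gamma_{l-1}^2},
\]
which is a strictly positive number by hypothesis; with this choice the entire $X_-u_j$ sum on the right is exactly absorbed into the $X_-u_l$ sum on the left, and the latter disappears from the inequality. The boundary contributions at $j=N+1, N+2$ vanish in the limit $N\to\infty$ since $u$ has finite degree.

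A short algebraic computation then gives
\[
C^+_{l-1}\gamma_{l-1}^2\bigl(1+\tfrac{1}{\eps_{l-1}}\bigr)=\frac{C^-_{l+1}\gamma_{l+1}^2\cdot C^+_{l-1}\gamma_{l-1}^2}{C^-_{l+1}\gamma_{l+1}^2-C^+_{l-1}\gamma_{l-1}^2},
\]
which, after writing out $C^{\pm}$ and re-indexing $l\mapsto l$, is precisely the coefficient appearing on the right-hand side of the proposition. Combining everything and taking $N\to\infty$ yields the stated inequality.

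The only delicate point is book-keeping: one must be careful with the \emph{two distinct} shifts in the $\sigma$-index ($\sigma_{l+1}$ on the $X_-$ side and $\sigma_{l-1}$ on the $X_+$ side of \eqref{gammal_condition_nonpositive_curvature}), because the parameter $\sigma_l$ in Lemma \ref{lemma_pestov_localized_nonpositivecurvature} is \emph{tied to the degree} $l$ of the spherical harmonic $u_l$ on which the lemma is applied, not to the operator it multiplies. Once these indices are lined up correctly (and this is the main spot where mistakes could creep in), the rest of the argument is a direct transcription of the proof of Proposition \ref{prop_carleman_general_weight}. No positivity gain from negative curvature is needed; the role played there by $\kappa\lambda_l\gamma_l^2\norm{u_l}^2$ is played here by $\sigma_l\kappa\lambda_l\gamma_l^2\norm{u_l}^2$, where $\kappa$ now comes from the simple/Anosov lower bound in Lemma \ref{lemma_simple_anosov_controlled}.
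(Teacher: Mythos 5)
Your proposal is correct and matches the paper's proof essentially line for line: apply Lemma \ref{lemma_pestov_localized_nonpositivecurvature} to each $u_l$, multiply by $\gamma_l^2$, split $X_+u_l$ via \eqref{xplusul_epsl_estimate}, sum in $l$, and pick $\eps_l$ so that the $X_-$ error terms are exactly cancelled (the analogue of $\delta_l\equiv1$ in Proposition \ref{prop_carleman_general_weight}). Your index bookkeeping with $C^{\pm}_l$ and the resulting coefficient identity are exactly the paper's choice $1+\eps_{l-2}=(\alpha_{l-1}-\sigma_l\lambda_{l-1}(1+\tfrac{1}{l+d-3})^2)\gamma_l^2/(\beta_{l-1}+\sigma_{l-2}\lambda_{l-1}(1-\tfrac{1}{l-1})^2)\gamma_{l-2}^2$, and the finite-degree truncation you add to justify the $N\to\infty$ limit is consistent with the paper's use of $C^\infty_F$.
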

\begin{proof}
Multiplying the estimate in Lemma \ref{lemma_pestov_localized_nonpositivecurvature} by $\gamma_\l^2$ and using \eqref{xplusul_epsl_estimate}, we obtain 
\begin{multline*}
\left(\alpha_{\l-1} - \sigma_\l \lambda_{\l-1}\left(1+\frac{1}{\l+d-3}\right)^2\right) \gamma_\l^2 \norm{X_- u_\l}^2 + \sigma_\l \kappa \lambda_\l \gamma_\l^2 \norm{u_\l}^2 \\
 \leq \left(\beta_{\l+1} + \sigma_\l \lambda_{\l+1}\left(1-\frac{1}{\l+1}\right)^2\right) \gamma_\l^2 \left[ \left(1+\frac{1}{\eps_\l}\right) \norm{(Xu)_{\l+1}}^2 + (1+\eps_\l) \norm{X_- u_{\l+2}}^2 \right].
\end{multline*}
Summing up these estimates for $\l \geq m$, where $m \geq 1$, yields 
\begin{multline*}
\sum_{\l=m}^{\infty} \left(\alpha_{\l-1} - \sigma_\l \lambda_{\l-1}\left(1+\frac{1}{\l+d-3} \right)^2 \right) \gamma_\l^2 \norm{X_- u_\l}^2 + \sum_{\l=m}^{\infty} \sigma_\l \kappa \lambda_\l \gamma_\l^2 \norm{u_\l}^2 \\
 \leq \sum_{\l=m+1}^{\infty} \left(\beta_{\l} + \sigma_{\l-1} \lambda_{\l}\left(1-\frac{1}{\l}\right)^2 \right) \gamma_{\l-1}^2 \left( 1+\frac{1}{\eps_{\l-1}} \right) \norm{(Xu)_{\l}}^2 \\
 + \sum_{\l=m+2}^{\infty} \left(\beta_{\l-1} + \sigma_{\l-2} \lambda_{\l-1}\left(1-\frac{1}{\l-1}\right)^2 \right) \gamma_{\l-2}^2 \left(1+\eps_{\l-2}\right) \norm{X_- u_{\l}}^2.
\end{multline*}
We now choose $\eps_\l$ so that for $\l \geq m+2$, 
\[
1 + \eps_{\l-2} = \frac{(\alpha_{\l-1} - \sigma_\l \lambda_{\l-1}(1+\frac{1}{\l+d-3})^2) \gamma_\l^2}{(\beta_{\l-1} + \sigma_{\l-2} \lambda_{\l-1}(1-\frac{1}{\l-1})^2) \gamma_{\l-2}^2}.
\]
It follows that 
\[
\frac{1}{\eps_{\l-1}} = \frac{(\beta_{\l} + \sigma_{\l-1} \lambda_{\l}(1-\frac{1}{\l})^2) \gamma_{\l-1}^2}{(\alpha_{\l} - \sigma_{\l+1} \lambda_{\l}(1+\frac{1}{\l+d-2})^2) \gamma_{\l+1}^2 - (\beta_{\l} + \sigma_{\l-1} \lambda_{\l}(1-\frac{1}{\l})^2) \gamma_{\l-1}^2}.
\]
The result follows.
\end{proof}

We will now prove the Carleman estimate by using suitable choices for the weights $\sigma_\l$ and $\gamma_\l$.

\begin{proof}[Proof of Theorem \ref{thm_pestov_carleman_nonpositivecurvature}]
Fix a constant $\mu > 1$, choose $\gamma_\l$ so that $\l \gamma_\l^2 = \mu^\l$, and choose $\sigma_\l = \frac{2\delta}{\l}$ where $0 < \delta < 1$ is fixed. The estimate in Proposition \ref{prop_carleman_general_weight_nonpositive_curvature} implies that  
\begin{multline*}
2 \kappa \delta  \sum_{\l=m}^{\infty} \mu^\l \norm{u_\l}^2 = \kappa \sum_{\l=m}^{\infty} \sigma_\l \l  \mu^\l \norm{u_\l}^2 \leq \kappa \sum_{\l=m}^{\infty} \sigma_\l \frac{\lambda_\l}{\l} \mu^\l \norm{u_\l}^2 = \kappa \sum_{\l=m}^{\infty} \sigma_\l \lambda_\l \gamma_\l^2 \norm{u_\l}^2 \\
 \leq \sum_{\l=m+1}^{\infty} \frac{(\alpha_{\l} - \sigma_{\l+1} \lambda_{\l}(1+\frac{1}{\l+d-2})^2) \gamma_{\l+1}^2 (\beta_{\l} + \sigma_{\l-1} \lambda_{\l}(1-\frac{1}{\l})^2) \gamma_{\l-1}^2}{(\alpha_{\l} - \sigma_{\l+1} \lambda_{\l}(1+\frac{1}{\l+d-2})^2) \gamma_{\l+1}^2 - (\beta_{\l} + \sigma_{\l-1} \lambda_{\l}(1-\frac{1}{\l})^2) \gamma_{\l-1}^2} \norm{(Xu)_{\l}}^2.
\end{multline*}
Using the formulas in Lemma \ref{lemma_pestov_xplusminus_generalconnection} and the fact that $\sigma_\l = \frac{2\delta}{\l}$, we have 
\begin{align*}
 &\alpha_{\l} - \sigma_{\l+1} \lambda_{\l}\left(1+\frac{1}{\l+d-2}\right)^2 \\
 &= (2\l+d-2) \left(1+\frac{1}{\l+d-2} \right) - \frac{2\delta}{\l+1} \lambda_\l \left(1+\frac{1}{\l+d-2}\right)^2 \\
 &= 2(1-\delta)\l + O(1)
\end{align*}
and 
\begin{align*}
 &\beta_{\l} + \sigma_{\l-1} \lambda_{\l}\left(1-\frac{1}{\l}\right)^2 \\
 &=  (2\l+d-2) \left(1-\frac{1}{\l} \right) + \frac{2 \delta}{\l-1} \lambda_{\l} \left(1-\frac{1}{\l}\right)^2 \\
 &= 2(1+\delta)\l + O(1)
\end{align*}
when $\l$ is large. These estimates also imply that 
\begin{align*}
\left(\alpha_{\l} - \sigma_{\l+1} \lambda_{\l}\left(1+\frac{1}{\l+d-2}\right)^2\right) \gamma_{\l+1}^2 &= (2(1-\delta) + O(\l^{-1})) \mu^{\l+1}, \\
\left(\beta_{\l} + \sigma_{\l-1} \lambda_{\l}\left(1-\frac{1}{\l}\right)^2\right) \gamma_{\l-1}^2 &= (2(1+\delta) + O(\l^{-1})) \mu^{\l-1}
\end{align*}
for $\l$ large.

Using the above bounds and choosing $m$ large enough, we obtain the estimate 
\begin{align*}
 &2 \kappa \delta \sum_{\l=m}^{\infty} \mu^\l \norm{u_\l}^2 \\
 &\leq \sum_{\l=m+1}^{\infty} \frac{(2(1-\delta) + O(\l^{-1})) (2(1+\delta) + O(\l^{-1})) }{(2(1-\delta) + O(\l^{-1})) - (2(1+\delta) + O(\l^{-1})) \mu^{-2}}  \mu^{\l-1} \norm{(Xu)_{\l}}^2 \\
 &\leq \frac{8 (1-\delta) (1+\delta)}{(1-\delta)  - 4(1+\delta) \mu^{-2}} \sum_{\l=m+1}^{\infty} \mu^{\l-1} \norm{(Xu)_{\l}}^2.
\end{align*}
This estimate makes sense if $1-\delta > 4(1+\delta) \mu^{-2}$, i.e.\ $\mu^2 > \frac{4(1+\delta)}{1-\delta}$.

Writing $\mu^2 = \frac{4(1+\delta)}{1-\delta} a$ where $a > 1$, the above estimate becomes 
\begin{align*}
2 \kappa \delta \sum_{\l=m}^{\infty} \mu^\l \norm{u_\l}^2 \leq \frac{8 (1+\delta)}{1-1/a} \sum_{\l=m+1}^{\infty} \mu^{\l-1} \norm{(Xu)_{\l}}^2.
\end{align*}
We will now make the choices 
\[
\delta = \frac{1}{2}, \qquad a = 2, \qquad \mu = e^{2\tau}
\]
where $\tau \geq \tau_0$ with $\tau_0$ chosen so that $e^{4\tau_0} > 12$. Then one has $\mu^2 > \frac{4(1+\delta)}{1-\delta}$, and the estimate takes the required form  
\[
\kappa \sum_{\l=m}^{\infty} e^{2\tau \l} \norm{u_\l}^2 \leq \frac{24}{e^{2\tau}} \sum_{\l=m+1}^{\infty} e^{2\tau \l} \norm{(Xu)_{\l}}^2. \qedhere
\]
\end{proof}

\begin{Remark}
So far, we have considered the case where $(M,g)$ has nonpositive curvature. One could ask if it is possible to obtain weighted estimates whenever $(M,g)$ is a general compact simple/Anosov manifold. In this case the sectional curvatures may be positive, and the most efficient way seems to be to write the Pestov identity as 
\[
\norm{X\vd u}^2 - (R \vd u, \vd u) + (d-1) \norm{Xu}^2 = \norm{\vd Xu}^2
\]
and use the fact that, by Lemma \ref{lemma_simple_anosov_controlled}, 
\[
\norm{X\vd u}^2 - (R \vd u, \vd u) \geq \alpha \norm{X\vd u}^2 + \kappa \norm{\vd u}^2
\]
for some $\alpha, \kappa > 0$. If one does this and runs the argument as in the negative curvature case, one arrives at the following estimate for any $u \in C^{\infty}(SM)$ (with $u|_{\partial(SM)} = 0$ in the boundary case):
\begin{multline*}
\alpha \sum_{\l=0}^{\infty} \alpha_\l \norm{X_- u_{\l+1}}^2 + \kappa \sum_{\l=1}^{\infty} \lambda_\l \norm{u_\l}^2 \\
 \leq (1-\alpha) \sum_{\l=2}^{\infty} (\lambda_\l - (d-1)) \norm{(Xu)_\l}^2 + \alpha \sum_{\l=2}^{\infty} \beta_\l \norm{X_+ u_{\l-1}}^2.
\end{multline*}

The $X_+$ terms on the right can be absorbed in the $X_-$ terms on the left, even with weights, just as in the negative curvature case. However, the new $(Xu)_\l$ terms on the right present problems. If one considers $u$ with $u_\l = 0$ for $\l < m$, which is the case relevant for the X-ray transform on $m$-tensors, then the $(Xu)_{m-1} = X_- u_m$ and $(Xu)_m = X_- u_{m+1}$ terms on the right cannot be absorbed to the left if $m \geq 2$ is large. Moreover, adding weights (i.e.\ replacing $u_\l$ by $\gamma_\l u_\l$) does not help, since the $X_- u_m$ term on the right is multiplied by the same $\gamma_m^2$ as the $X_- u_m$ and $u_m$ terms on the left. Thus this method does not seem to shed new light to the tensor tomography problem of simple/Anosov manifolds (which is still open when $\dim(M) \geq 3$ and $m \geq 2$).

One could still ask if one gets new weighted estimates in the cases $m=0,1$. In these cases the $(Xu)_0$ and $(Xu)_1$ terms pose no problem (they are not present on the right). However, replacing $u$ by its weighted version $\sum \gamma_\l u_\l$ in the $(Xu)_\l$ terms on the right will generate new $X_-$ and $X_+$ terms, and it seems that absorbing the new terms from right to left requires weights that grow at most mildly. This is not sufficient for dealing with large general connections or Higgs fields, but could lead to a shifted version of the Pestov identity as in Theorem \ref{thm_pestov_carleman_shifted} but where $s$ has to be in the range $-1/2 < s \leq 1/2$. We omit the details.
\end{Remark}

\section{A regularity result for the transport equation} \label{sec_regularity}

Let $(M,g)$ be a compact Riemannian manifold with boundary whose geodesic flow is {\it nontrapping}, and
let $\mathcal A:SM\to \C^{n\times n}$ be an arbitrary smooth attenuation. If the boundary of $M$ is strictly convex, it was proved in \cite[Proposition 5.2]{PSU2} that any solution to $Xu + \mathcal{A}u = -f$ in $SM$ with $u|_{\partial(SM)} = 0$ is smooth in $SM$ whenever $f$ is smooth. We would like to prove an analogue
of this statement, but without assuming that the boundary of $M$ is strictly convex.
We will do so at the cost of assuming that the function $f$ is supported in the interior of $SM$.

Given a smooth
$f:SM\to\C^n$, denote by $u^f$ the unique solution to
\[\left\{\begin{array}{ll}
Xu+\mathcal{A}u=-f,\\
\,\,\,u|_{\partial_{-}(SM)}=0.\\
\end{array}\right.\]
The function $u^f$ may fail to be differentiable due to the non-smoothness of $\tau$. In fact, $\tau$ might not even
be continuous.

\begin{Proposition} \label{prop:smooth}
Let $f:SM\to\C^n$ be smooth, supported in the interior of $SM$ with $I_{\mathcal A}(f)=u^{f}|_{\partial_{+}(SM)}=0$. Then $u^f:SM\to \C^n$ is smooth.
\end{Proposition}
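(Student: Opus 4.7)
The plan is to show that $u^f$ vanishes on an open neighborhood of $\partial(SM)$, and then to deduce joint smoothness on all of $SM$ from the ODE structure of the transport equation.

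First, since $u^f|_{\partial_-(SM)}=0$ by construction and $u^f|_{\partial_+(SM)}=0$ by the hypothesis $I_{\mathcal A}(f)=0$, the function $u^f$ vanishes on the entirety of $\partial(SM)$. Write $K:=\mathrm{supp}(f)$, a compact subset of $SM\setminus\partial(SM)$, and let $W_{+}$ (resp.\ $W_{-}$) denote the set of $(x,v)\in SM$ whose forward (resp.\ backward) orbit in $SM$ avoids $K$. On $W_{+}$, the orbit of $(x,v)$ misses $\mathrm{supp}(f)$ and terminates at an exit point in $\partial_+(SM)$ where $u^f=0$, so the homogeneous ODE $\partial_t u+\mathcal A u=0$ along the orbit forces $u^f(x,v)=0$; symmetrically $u^f\equiv 0$ on $W_{-}$.

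The second step is to argue that $W_{+}\cup W_{-}$ contains an open neighborhood $\mathcal O$ of $\partial(SM)$ in $SM$. Every $(x_0,v_0)\in\partial(SM)$ has at least one of $\tau(x_0,v_0)$, $\tau_-(x_0,v_0)$ equal to zero, so its relevant one-sided orbit is the single point $(x_0,v_0)\notin K$ and $(x_0,v_0)$ lies in $W_{+}\cup W_{-}$. Using continuity of the geodesic flow on a short time interval and the fact that $\mathrm{dist}(K,\partial(SM))>0$, one verifies that for $(x,v)$ close to $(x_0,v_0)$ either the short forward or the short backward orbit stays in a tubular neighborhood of $\partial(SM)$ of thickness less than $\mathrm{dist}(K,\partial(SM))$, hence avoids $K$; this places $(x,v)$ in $W_{+}\cup W_{-}$. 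In particular $u^f\equiv 0$ on $\mathcal O$, so $u^f$ is smooth there.

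Finally, for $(x_0,v_0)\in SM\setminus(\partial(SM)\cup\mathcal O)$, nontrapping gives some $s_0>0$ with $\varphi_{-s_0}(x_0,v_0)\in\mathcal O$, and by smoothness of the flow on the interior this $s_0$ works on a whole neighborhood of $(x_0,v_0)$. The representation
\[
u^f(x,v)=-\int_{-s_0}^{0}\mathcal R(x,v,s)\,f(\varphi_s(x,v))\,ds,
\]
with $\mathcal R$ the smooth propagator determined by $\mathcal A$ along geodesics and initial condition $u^f(\varphi_{-s_0}(x,v))=0$, exhibits $u^f$ as smooth near $(x_0,v_0)$.

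The principal difficulty is the second step. Without strict convexity of $\partial M$, the exit times $\tau$ and $\tau_-$ may be discontinuous at glancing boundary points, so interior points arbitrarily close to such a glancing point can have very long orbits that enter $K$, preventing a direct argument that $(x,v)\in W_{+}$ or $W_{-}$. The resolution is to use the interior-support hypothesis $\mathrm{supp}(f)\subset SM\setminus\partial(SM)$ quantitatively through the positive distance $\mathrm{dist}(K,\partial(SM))$, together with the fact that at a boundary point the relevant one-sided orbit degenerates to a point: a suitably short orbit on the appropriate side for nearby points is forced by continuity of the flow to remain in the tubular neighborhood of $\partial(SM)$ and hence outside $K$, guaranteeing membership in $W_{+}\cup W_{-}$.
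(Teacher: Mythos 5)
Your argument hinges on establishing that $u^f$ vanishes on an open neighborhood $\mathcal{O}$ of $\partial(SM)$, and the step that is supposed to prove this contains a gap: the sets $W_\pm$ consist of points whose \emph{entire} forward (resp.\ backward) orbit in $SM$ avoids $K=\mathrm{supp}(f)$, but your argument that $(x,v)$ near $(x_0,v_0)\in\partial(SM)$ lies in $W_+\cup W_-$ only controls a \emph{short} initial piece of that orbit, and from ``the short piece avoids $K$'' one cannot conclude membership in $W_\pm$ unless the short piece already exits $SM$. This is exactly what fails at glancing points $(x_0,v_0)\in\partial_0(SM)$ where the boundary is locally concave: there the tangent geodesic runs into $M^{\mathrm{int}}$ in both directions, so \emph{both} $\tau(x_0,v_0)$ and $\tau_-(x_0,v_0)$ are strictly positive, which already contradicts your claim that at every boundary point one of them vanishes (the ``relevant one-sided orbit'' does not degenerate to a point). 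Moreover, interior points $(x,v)$ arbitrarily close to such a glancing point have long orbits that can pass through $K$ on both sides, and for such a point the constraint $I_{\mathcal A}(f)=0$ only forces the weighted integral of $f$ along the \emph{full} orbit of $(x,v)$ to vanish; the partial integral from $(x,v)$ to the forward exit, which is precisely $u^f(x,v)$, is generically nonzero. So the neighborhood $\mathcal{O}$ need not exist at all, and since your third step also presupposes $u^f\equiv 0$ on $\mathcal{O}$, the argument collapses.

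The paper's proof sidesteps the vanishing claim entirely. It embeds $M$ isometrically in a closed manifold $N$, extends $f$ by zero and $\mathcal{A}$ smoothly to $N$, and shows that for any time $\ell(x,v)$ with $\gamma_{x,v}(\ell(x,v))\notin M$, the solution $w$ of the same ODE along the extended geodesic with zero data at time $\ell(x,v)$ satisfies $w(0,x,v)=u^f(x,v)$. The proof of this identity decomposes the time interval between $\tau(x,v)$ and $\ell(x,v)$ into alternating segments inside $M$ (where $I_{\mathcal A}(f)=0$ kills the contribution) and outside $M$ (where the extended $f$ vanishes). One then chooses $\ell$ to depend smoothly on $(x,v)$ locally by intersecting the extended geodesic with a small hypersurface $\Sigma\subset N$ disjoint from $M$ and transversal to $\gamma_{x_0,v_0}$, and reads off smoothness of $u^f$ near $(x_0,v_0)$ from smoothness of $w(0,\cdot)$. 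This gives a smooth local formula for $u^f$ without any claim that $u^f$ vanishes near $\partial(SM)$ --- and indeed $u^f$ does not vanish there in general.
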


\begin{proof}  We consider $(M,g)$ isometrically embedded in a closed manifold $(N,g)$ and we extend $f$ by zero. We also extend $\mathcal{A}$ smoothly to $N$. 
The idea is to use the argument in \cite[Lemma 2.3]{Da06}
and replace $\tau$ locally by other suitable smooth functions. 
Let $\rho(x,v)$ be any time such that $\gamma_{x,v}(\rho(x,v))\notin M$. Consider the function $w(t,x,v)$ defined by the ordinary differential equation

\[\left\{\begin{array}{ll}
\frac{d}{dt }w(t,x,v)+\mathcal{A}(\varphi_{t}(x,v))w(t,x,v)=-f(\varphi_{t}(x,v)),\\
w(\rho(x,v),x,v)=0.\\
\end{array}\right.\]
We claim that for $(x,v)\in SM$
\begin{equation}
u^{f}(x,v)=w(0,x,v).
\label{eq:h}
\end{equation}
To prove \eqref{eq:h} observe that the function $u(t,x,v):=u^{f}(\varphi_{t}(x,v))$ solves the differential equation
\[\left\{\begin{array}{ll}
\frac{d}{dt }u(t,x,v)+\mathcal{A}(\varphi_{t}(x,v))u(t,x,v)=-f(\varphi_{t}(x,v)),\\
u(\tau(x,v),x,v)=0.\\
\end{array}\right.\]
Next note that the interval $[\tau,\rho]$ can be decomposed into two types of intervals: 
the first type consists of intervals where the geodesic $\gamma_{x,v}$ is a maximal geodesic segment in $M$ with boundary points on $\partial M$, and the second type consists of intervals where $\gamma_{x,v}$ runs outside $M$.
Recall that outside $M$, $f=0$ and while inside $M$, $I_{\mathcal A}(f)=0$. Thus $w(\tau(x,v),x,v)=0$.
Hence \eqref{eq:h} follows from uniqueness of solutions of ordinary differential equations.

Since $\gamma_{x,v}(\rho(x,v))\notin M$, we can take a small hypersurface $\Sigma$ in $N$ transversally intersecting $\gamma_{x,v}$ at the point $\gamma_{x,v}(\rho(x,v))$ and disjoint from $M$. Then there is a neighbourhood of $(x,v)$ in $SN$ such that for every $\theta$ in this neighbourhood the geodesic $\gamma_{\theta}$ will hit $\Sigma$ at the time $\rho(\theta)$ smoothly depending on $\theta$. 
If we now use this local function $\rho(\theta)$ to define $w$, using \eqref{eq:h} we see that $u^f$ is smooth around $(x,v)$ since $w(0,\theta)$ is.
\end{proof}

\begin{Remark}{\rm All that is needed is that $f$ has vanishing jet at the boundary. 
It might be possible to prove that if $f=f(x)$ and $I_{\mathcal A}(f)=0$, then $f$ has vanishing jet at the boundary following the ideas in \cite{GMT} or \cite{SU09}, but we do not pursue this matter here.
 }
\end{Remark}

\section{Applications} \label{sec_applications}

Finally, we will employ the Carleman estimates proved in this article in various applications as described in the introduction.

\vspace{15pt}

\noindent {\bf Uniqueness results for the transport equation and X-ray transforms.} We start with the proof
of Theorem \ref{thm_xray_transport_intro}.

\begin{proof}[Proof of Theorem \ref{thm_xray_transport_intro}]
Suppose that $f$ has degree $m_0 \geq 0$, and let $m \geq \max\{ \l_0-1, m_0 \}$. Then $(Xu)_\l = -(\mathcal{A}(u))_\l$ for $\l \geq m+1$ and 
\[
\norm{(Xu)_\l} \leq R ( \norm{u_{\l-1}} + \norm{u_\l} + \norm{u_{\l+1}} ), \qquad \l \geq m+1.
\]
Inserting this estimate in Theorem \ref{thm_carleman_negativecurvature_intro} yields that, for $\tau \geq 1$,  
\begin{align*}
\sum_{\l=m}^{\infty} \l^{2\tau} \norm{u_\l}^2 &\leq \frac{CR}{\tau} \sum_{\l=m+1}^{\infty} \l^{2\tau} ( \norm{u_{\l-1}}^2 + \norm{u_\l}^2 + \norm{u_{\l+1}}^2 ) \\
 &\leq \frac{CR}{\tau} \sum_{\l=m}^{\infty} (\l+1)^{2\tau} \norm{u_\l}^2
\end{align*}
where $C = C_{d,\kappa}$. If we additionally assume that $m \geq 2\tau$, then 
\[
(\l+1)^{2\tau} = \left( 1 + \frac{1}{\l} \right)^{2\tau} \l^{2\tau} \leq e \l^{2\tau}, \qquad \l \geq m.
\]
Thus if $m \geq \max\{ \l_0-1, m_0, 2\tau \}$ we have 
\[
\sum_{\l=m}^{\infty} \l^{2\tau} \norm{u_\l}^2 \leq \frac{CR}{\tau} \sum_{\l=m}^{\infty} \l^{2\tau} \norm{u_\l}^2.
\]
Now fix $\tau = \tau_0 = \max \{ 2CR, 1 \}$ and let $m = \max\{ \l_0-1, m_0, 2\tau_0 \}$. Then the right hand side of the last inequality can be absorbed to the left, and we obtain 
\[
\sum_{\l=m}^{\infty} \l^{2\tau} \norm{u_\l}^2 \leq 0.
\]
Thus $u_\l = 0$ for $\l \geq m$, so $u$ has degree $\leq m-1$ where $m = \max \{ \l_0-1, m_0, 4 C R \}$ where $C = C_{d,\kappa}$.
\end{proof}

We next show Theorem \ref{thm_intro_attenuated_xray}.

\begin{proof}[Proof of Theorem \ref{thm_intro_attenuated_xray}]
Let $u = u^f$ be the solution of 
\[
(X+A+\Phi)u = -f \text{ in $SM$}, \qquad u|_{\partial_-(SM)} = 0.
\]
Since the attenuated X-ray transform of $f$ vanishes, we have $u|_{\partial(SM)} = 0$. This implies that $u \in C^{\infty}(SM, \mC^n)$; 
if $M$ has strictly convex boundary this follows from \cite[Proposition 5.2]{PSU2}, and alternatively if $f$ is supported in the interior of $SM$ we may use Proposition \ref{prop:smooth}. We are now exactly in the setting of Theorem \ref{thm_xray_transport_intro} with $\mathcal{A}(u) = Au + \Phi u$ and $l_0 = \max\{ m-1, 2 \}$, and since $f$ has finite degree it follows that also $u$ has finite degree, i.e.\ $u_l = 0$ for $l \geq m_0+1$ for some $m_0$.

We still need to show that $m_0 \leq m$. If this is not the case, we use the transport equation $Xu + Au + \Phi u = -f$ again to derive
$X^{A}_{+}u_{m_{0}}=0$. Since $u_{m_{0}}|_{\partial(SM)}=0$, \cite[Theorem 5.2]{GPSU} implies that
$u_{m_{0}}=0$ and arguing in the same way with $u_{m_{0}-1}$ and so on, we deduce that all $u_{l}=0$ for $l\geq m$. This shows that $f = -(X+A+\Phi) u$ where $u$ has degree $m-1$ and $u|_{\partial(SM)} = 0$.
\end{proof}

\vspace{5pt}

\noindent {\bf Scattering rigidity for general connections in negative curvature.}
Let us move to our next application, concerning an inverse problem with scattering data. On a nontrapping compact manifold $(M,g)$ with strictly convex boundary, the scattering relation $\alpha = \alpha_g: \partial_+(SM) \to \partial_-(SM)$ maps a starting point and direction of a geodesic to the end point and direction. If $(M,g)$ is simple, then knowing $\alpha_g$ is equivalent to knowing the boundary distance function $d_g$ which encodes the distances between any pair of boundary points \cite{Mi}. On two dimensional simple manifolds, the boundary distance function $d_g$ determines the metric $g$ up to an isometry which fixes the boundary \cite{PestovUhlmann}.

Given a connection $A$ and a potential $\Phi$ on the bundle $M \times \C^n$, there is an additional piece of scattering data. Consider the unique matrix solution
$U:SM\to \GL(n,\C)$ to the transport equation
$$
X U + A U + \Phi U = 0 \text{ in } SM, \quad U|_{\partial_{+}(SM)}=\id.
$$
As discussed in the introduction, the scattering data (or relation) corresponding to a matrix attenuation pair $(A,\Phi)$ in $(M,g)$ is the map
$$
C_{A,\Phi}:\partial_{-}(SM)\to \GL(n,\C), \ \  C_{A,\Phi} := U|_{\partial_{-}(SM)}.
$$
Theorem \ref{thm_intro_scattering_rigidity} in the introduction proves that knowledge of $C_{A,\Phi}$ uniquely determines the pair $(A,\Phi)$ up to gauge.
The theorem can be proved by introducing a {\it pseudo-linearization} that reduces the nonlinear problem to the linear one in Theorem \ref{thm_intro_attenuated_xray}.  The argument is identical to the one used in \cite{PSU2} and \cite{PSUZ16}, and we do not need to repeat it here.

\bigskip

\noindent {\bf Transparent pairs.} We next discuss the problem of when the parallel transport  associated with a pair
$(A,\Phi)$ determines the pair up to gauge equivalence in the case of closed manifolds.
This problem is discussed in detail in \cite{GPSU,Pa2,Pa3,P1,P2}, but the results are only for pairs
taking values in the Lie algebra of the unitary group. The extension to the group $\GL(n,\mR)$ is non-trivial on two counts. It requires a recent extension of the Livsic theorem to arbitrary matrix groups \cite{Ka} and our new Carleman estimate for the geodesic vector field in negative curvature.

Since there is no boundary, we need to consider the parallel transport of a pair along closed geodesics.
We shall consider a simplified version of the problem, which is interesting in its own right.
We will attempt to understand those pairs
$(A,\Phi)$ with the property that the parallel transport along closed geodesics is the identity.
These pairs will be called {\it transparent} as they are invisible from the point of view of the closed geodesics of the Riemannian metric.

Let $(M,g)$ be a closed Riemannian manifold, $A$ a connection taking values in the set of real $n\times n$ matrices and $\Phi$ a potential also taking values in $\mathbb{R}^{n\times n}$.
The pair $(A,\Phi)$ naturally induces a $\GL(n,\mathbb{R})$-cocycle over
the geodesic flow $\varphi_t$ of the metric $g$ acting on the unit sphere bundle
$SM$ with projection $\pi:SM\to M$. The cocycle is defined by
\[\frac{d}{dt}C(x,v,t)=-(A(\varphi_{t}(x,v))+\Phi(\pi\circ\varphi_{t}(x,v)))C(x,v,t),\;\;\;\;\;C(x,v,0)=\mbox{\rm Id}.\]
The cocycle $C$ is said to be {\it cohomologically trivial}
if there exists a smooth function $u:SM\to \GL(n,\re)$ such that
\[C(x,v,t)=u(\varphi_{t}(x,v))u^{-1}(x,v)\]
for all $(x,v)\in SM$ and $t\in\re$. We call $u$ a trivializing function and note that two trivializing functions $u_{1}$ and $u_{2}$ (for the same cocycle) are related by $u_{2}w=u_{1}$
where $w:SM\to \GL(n,\re)$ is constant along the orbits of the geodesic flow. In particular, if $\varphi_{t}$ is transitive (i.e.\ there is a dense orbit) there is a unique trivializing function up to
right multiplication by a constant matrix in $\GL(n,\re)$.

\begin{Definition} {\rm We will say that a pair $(A,\Phi)$ is cohomologically trivial if $C$ is cohomologically trivial.
The pair $(A,\Phi)$
is said to be {\it transparent} if $C(x,v,T)=\mbox{\rm Id}$ every time
that $\varphi_{T}(x,v)=(x,v)$. }
\end{Definition}

Observe that the gauge group given by the set of smooth maps
$r:M\to \GL(n,\re)$ acts on pairs as follows:
\[(A,\Phi)\mapsto (r^{-1}dr+r^{-1}Ar,r^{-1}\Phi r).\]
This action leaves invariant the set of cohomologically trivial pairs: indeed,
if $u$ trivializes the cocycle $C$ of a pair $(A,\Phi)$, then it is easy to check that $r^{-1}u$ trivializes the cocycle of the pair $(r^{-1}dr+r^{-1}Ar,r^{-1}\Phi r)$.

Obviously a cohomologically trivial pair is transparent.
There is one important situation in which both notions agree. If $\varphi_t$ is
Anosov, then the Livsic theorem for $\GL(n,\re)$ cocycles due to Kalinin \cite{Ka} (extending the work of Livsic for the case of a cocycle taking values in a compact Lie group \cite{L1,L2}) together with the regularity results in \cite[Theorem 2.4]{NT}
imply that a transparent pair is also cohomologically trivial. 
We already pointed out that the  Anosov property is satisfied, if for example $(M,g)$ has negative curvature.

Given a cohomologically trivial pair $(A,\Phi)$, a trivializing function $u$ satisfies
\begin{equation}
(X+A+\Phi)u=0.
\label{eq:transparent}
\end{equation}
If we assume now that $(M,g)$ is negatively curved and the kernel of $X^A_+$ on $\Omega_m$ is trivial for $m \geq 1$ (i.e.\ there are no nontrivial twisted conformal Killing tensors (CKTs), see \cite{GPSU}), then the proof 
of Theorem \ref{thm_intro_attenuated_xray} implies that
$u=u_0$. If we split equation (\ref{eq:transparent}) in degrees zero and one we obtain $\Phi u_0=0$ and
$du+Au=0$. Equivalently, $\Phi=0$ and $A$ is gauge equivalent to the trivial connection.
Hence we have proved

\begin{Theorem} \label{main_thm_transparent_pairs}
Let $(M,g)$ be a closed negatively curved manifold and $(A,\Phi)$ a transparent pair.
If there are no nontrivial twisted CKTs, then $A$ is gauge equivalent to the trivial connection and
$\Phi=0$.
\label{thm:transparentpairs}
\end{Theorem}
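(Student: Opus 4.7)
The plan is to make rigorous the argument sketched in the paragraph immediately preceding the statement. First, since $(M,g)$ is closed and negatively curved, the geodesic flow $\varphi_t$ is Anosov. The transparency hypothesis says that the $GL(n,\mR)$-cocycle $C(x,v,t)$ generated by $(A,\Phi)$ equals the identity on every periodic orbit. I would then invoke Kalinin's extension of the Livsic theorem to general matrix cocycles \cite{Ka}, together with the regularity results of \cite[Theorem 2.4]{NT}, to produce a \emph{smooth} trivializing function $u \colon SM \to GL(n,\mR)$ satisfying $C(x,v,t) = u(\varphi_t(x,v))\,u^{-1}(x,v)$. Differentiating this relation at $t=0$ yields the transport equation
\[
(X+A+\Phi)u = 0 \quad \text{in } SM.
\]

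The second step is to show that $u$ has finite fibrewise degree. Writing $\mathcal{A}(u) := Au + \Phi u$, and using that $A$ corresponds to an element of $\Omega_1$ and $\Phi$ is scalar in $v$, one gets
\[
\norm{(\mathcal{A}(u))_l} \leq R\,(\norm{u_{l-1}} + \norm{u_l} + \norm{u_{l+1}}), \qquad l \geq 1,
\]
with $R$ depending only on $\norm{A}_{L^\infty}$ and $\norm{\Phi}_{L^\infty}$. Because $M$ is closed there are no boundary terms, so $Xu = -\mathcal{A}(u)$ globally, and the proof of Theorem \ref{thm_xray_transport_application} applies verbatim: inserting the bound above into the Carleman estimate of Theorem \ref{thm_carleman_negativecurvature_intro}, choosing $\tau$ proportional to $R/\kappa$ and then $m \geq 2\tau$, the right-hand side is absorbed into the left and forces $u_l = 0$ for every $l$ beyond a finite threshold. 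Hence $u$ has finite degree.

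With $u$ smooth and of finite degree, let $m_0$ be its top degree. Projecting $(X+A+\Phi)u = 0$ onto $\Omega_{m_0+1}$ gives $X^A_+ u_{m_0} = 0$, since both $\Phi u$ and $Au$ raise the degree by at most one. The no-CKT hypothesis is precisely that $X^A_+$ is injective on $\Omega_m$ for every $m \geq 1$, so $u_{m_0} = 0$ unless $m_0 = 0$. Iterating, we conclude $u = u_0 \in \Omega_0$, i.e.\ $u_0$ is a smooth $GL(n,\mR)$-valued function on $M$. Finally, splitting the equation $(X+A+\Phi)u_0 = 0$ into vertical Fourier components: the $\Omega_0$-part gives $\Phi u_0 = 0$, and pointwise invertibility of $u_0$ forces $\Phi \equiv 0$; the $\Omega_1$-part gives $Xu_0 + A u_0 = 0$, which (identifying $Xu_0(x,v)$ with $du_0(x)\cdot v$) is the statement $du_0 + A u_0 = 0$ on $M$, so that $r := u_0^{-1}$ satisfies $r^{-1}dr + r^{-1}\cdot 0 \cdot r = -u_0 \cdot d(u_0^{-1}) = A$, realising $A$ as gauge equivalent to the trivial connection.

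The substantive analytic ingredient is the regularity step: without smoothness of $u$ one cannot apply the Carleman estimate. I expect this to be the only nontrivial input beyond results already assembled in the paper, and it is supplied by combining \cite{Ka} with \cite{NT}. Everything after that is a direct application of Theorem \ref{thm_xray_transport_application} (via Theorem \ref{thm_carleman_negativecurvature_intro}) and the no-CKT hypothesis.
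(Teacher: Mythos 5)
Your proposal is correct and follows essentially the same route as the paper: Kalinin's Livsic theorem plus the regularity result of \cite{NT} produce a smooth trivializing function $u$ solving $(X+A+\Phi)u=0$, the Carleman estimate (via Theorem~\ref{thm_xray_transport_application}) forces $u$ to have finite degree, the no-CKT hypothesis then collapses $u$ to $u_0 \in \Omega_0$, and the degree-$0$ and degree-$1$ projections give $\Phi u_0 = 0$ (hence $\Phi = 0$ since $u_0$ is $GL(n,\mR)$-valued) and $du_0 + Au_0 = 0$ (hence $A = r^{-1}dr$ with $r = u_0^{-1}$). You simply unpack the intermediate steps that the paper compresses into the single phrase ``the proof of Theorem~\ref{thm_attenuated_xray_application} implies that $u = u_0$''.
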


\bibliographystyle{alpha}

\begin{thebibliography}{DKSU09}







\bibitem[Bo10]{Boman} J.\ Boman, {\it Local non-injectivity for weighted Radon transforms}, Contemp. Math. {\bf 559} (2010), 39--48.





\bibitem[CS98]{CS} C.\ Croke, V.A.\ Sharafutdinov, \emph{Spectral rigidity of a compact negatively curved manifold}.
Topology \textbf{37} (1998) 1265--1273.


\bibitem[Da06]{Da06} N.S.\ Dairbekov, {\it Integral geometry for nontrapping manifolds,} Inverse Problems {\bf 22} (2006) 431--445.


\bibitem[DS11]{DS} N.S.\ Dairbekov, V.A.\ Sharafutdinov, {\it On conformal Killing symmetric tensor fields on Riemannian manifolds,} Siberian Advances in Mathematics {\bf 21} (2011) 1--41.


\bibitem[dMM86]{dLMM} R.\ de la Llave, J.M.\ Marco, R.\ Moriy\'{o}n, {\it  Canonical perturbation theory of Anosov systems and regularity results for the Livsic cohomology equation,}  Ann. of Math.  {\bf 123} (1986) 537--611.

\bibitem[DKSU09]{DKSaU}
D. Dos Santos Ferreira, C.E. Kenig, M. Salo, G. Uhlmann,
  \emph{{Limiting Carleman weights and anisotropic inverse problems}}, Invent. Math. \textbf{178} (2009), 119--171.

\bibitem[DKLS16]{DKLS} D.\ Dos Santos Ferreira, Y.\ Kurylev, M.\ Lassas, M.\ Salo, \emph{{The Calder{\'o}n problem in transversally anisotropic geometries}}, J. Eur. Math. Soc. (JEMS) \textbf{18} (2016), 2579--2626.




\bibitem[Es04]{E} G.\ Eskin, {\it On non-abelian Radon transform,} Russ. J. Math. Phys.  {\bf 11}  (2004) 391--408.



\bibitem[FU01]{FU} D.\ Finch, G.\ Uhlmann, {\it The X-ray transform for a non-abelian connection in two dimensions,} Inverse Problems {\bf 17} (2001) 695--701.


\bibitem[Gu17]{Guillarmou} C.\ Guillarmou, \emph{Lens rigidity for manifolds with hyperbolic trapped set}, Journal of AMS {\bf 30} (2017), 561--599.

\bibitem[GMT17]{GMT} C. Guillarmou, M. Mazzucchelli, L. Tzou, {\it Boundary and lens rigidity for non-convex manifolds}, Amer. J. Math. {\bf 143} (2021) 533--575.

\bibitem[GPSU16]{GPSU} C. Guillarmou, G.P. Paternain, M. Salo, G. Uhlmann, {\it The X-ray transform for connections in negative curvature}, Comm. Math. Phys. {\bf 343} (2016) 83--127. 


\bibitem[GK80a]{GK} V. Guillemin, D. Kazhdan, {\it Some inverse spectral
results for negatively curved 2-manifolds,} Topology {\bf 19}
(1980) 301--312.


\bibitem[GK80b]{GK2} V. Guillemin, D. Kazhdan, {\it Some inverse spectral results for negatively curved n-manifolds,}  Geometry of the Laplace operator (Proc. Sympos. Pure Math., Univ. Hawaii, Honolulu, Hawaii, 1979), pp. 153--180, Proc. Sympos. Pure Math., XXXVI, Amer. Math. Soc., Providence, R.I., 1980.


\bibitem[He99]{Helgason}
S.\ Helgason, The Radon transform. Progress in Mathematics {\bf 5}, 2nd edition, Birkh\"auser Boston, Inc., Boston, MA, 1999.

\bibitem[H\"o85]{Hormander}
L.\ H\"ormander, The analysis of linear partial differential operators, vols.\ I-IV. Springer, 1983--1985.

\bibitem[JK85]{JerisonKenig}
D.\ Jerison, C.E.\ Kenig, {\it Unique continuation and absence of positive eigenvalues for Schr\"odinger operators}, Ann. of Math. {\bf 121} (1985) 463--488. 


\bibitem[Ka11]{Ka} B.\ Kalinin, \textit{Livsic theorem for matrix cocycles,} Ann. of Math. {\bf 173} (2011) 1025--1042.


\bibitem[KRS87]{KenigRuizSogge}
C.E.\ Kenig, A.\ Ruiz, C.D.\ Sogge, {\it Uniform Sobolev inequalities and unique continuation for second order constant coefficient differential operators}, Duke Math. J. {\bf 55} (1987) 329--347.

\bibitem[KSW15]{KenigSilvestreWang}
C.E. Kenig, L. Silvestre, J.-N. Wang, {\it On Landis' conjecture in the plane}, Comm. PDE {\bf 40} (2015), 766--789.


\bibitem[Kn02]{Kn} G. Knieper, {\it Hyperbolic dynamics and Riemannian geometry.} Handbook of dynamical systems, Vol. 1A, 453--545, North-Holland, Amsterdam, 2002.

\bibitem[LRS18]{LRS} J.\ Lehtonen, J.\ Railo, M.\ Salo, \textit{{Tensor tomography on Cartan--Hadamard manifolds}}, Inverse Problems {\bf 34} (2018), 044004.

\bibitem[Li71]{L1} A.N. Livsic,
{\it Certain properties of the homology of $Y$-systems,}
Mat. Zametki {\bf 10} (1971) 555--564. 


\bibitem[Li72]{L2} A.N. Livsic, {\it Cohomology of dynamical systems, }
Izv. Akad. Nauk SSSR Ser. Mat.  {\bf 36}  (1972) 1296--1320. 




\bibitem[Mi81]{Mi} R. Michel, \emph{Sur la rigidit\'e impos\'ee par la longueur des g\'eod\'esiques}, Invent. Math. \textbf{65} (1981) 71--83.

\bibitem[MP18]{MP18} F. Monard, G.P. Paternain, {\it The geodesic X-ray transform with a $GL(n,\C)$-connection}, J. Geom. Anal. {\bf 30} (2020) 2515--2557.


\bibitem[Na01]{Natterer} F.\ Natterer, The mathematics of computerized tomography. Classics in Applied Mathematics {\bf 32}, SIAM, 2001.

\bibitem[NT98]{NT} V. Ni\c tic\u a, A. T\"or\"ok, {\it Regularity of the transfer map for cohomologous cocycles,}  Ergodic Theory Dynam. Systems  {\bf 18}  (1998) 1187--1209.

\bibitem[No02]{No} R. Novikov, {\it On determination of a gauge field on $\re^d$ from its non-abelian Radon transform along oriented straight lines}, J. Inst. Math. Jussieu {\bf 1} (2002) 559--629.

\bibitem[No18]{Novikov_nonabelian} R.\ Novikov, {\it Non-abelian Radon transform and its applications}, R. Ramlau, O. Scherzer. The Radon Transform: The First 100 Years and Beyond, pp. 115--128, 2019. 

\bibitem[Pa99]{Pa} G.P. Paternain, Geodesic flows. Progress in Mathematics {\bf 180}, Birkh\"auser 1999.

\bibitem[Pa09]{Pa2} G.P. Paternain, {\it Transparent connections over negatively curved surfaces}, J. Mod. Dyn. {\bf 3} (2009) 311--333.

\bibitem[Pa11]{Pa3} G.P. Paternain, {\it B\"acklund transformations for transparent connections,} J. Reine Angew. Math. {\bf 658} (2011)  27--37.

\bibitem[Pa12]{P1} G.P. Paternain,  {\it Transparent pairs}, J. Geom. Anal. {\bf 22} (2012) 1211--1235. 


\bibitem[Pa13]{P2} G.P. Paternain, {\it Inverse problems for connections}, Inverse problems and applications: inside out. II, 369--409, Math. Sci. Res. Inst. Publ., 60, Cambridge Univ. Press, Cambridge, 2013. 



\bibitem[PS21]{PS_MZ} G.P. Paternain, M. Salo, {\it A sharp stability estimate for tensor tomography in non-positive curvature,} Math. Z. {\bf 298} (2021) 1323--1344.

\bibitem[PS20]{PS_LoopG} G.P. Paternain, M. Salo, {\it The non-Abelian X-ray transform on surfaces,} arXiv:2006.02257.

\bibitem[PSU12]{PSU2} G.P. Paternain, M. Salo, G. Uhlmann, {\it The attenuated ray transform for connections and Higgs fields},  Geom. Funct. Anal. {\bf 22} (2012) 1460--1489.


\bibitem[PSU13]{PSU1} G.P. Paternain, M. Salo and G. Uhlmann, {\it Tensor tomography on simple surfaces},  Invent. Math. {\bf 193} (2013) 229--247.


\bibitem[PSU14a]{PSU4} G.P. Paternain, M. Salo, G. Uhlmann, {\it Tensor tomography: progress and challenges,} Chinese Ann. Math. Ser. B {\bf 35} (2014) 399--428.

\bibitem[PSU14b]{PSU3} G.P. Paternain, M. Salo, G. Uhlmann, {\it Spectral rigidity and invariant distributions on Anosov surfaces,} J. Differential Geometry {\bf 98} (2014) 147--181.

\bibitem[PSU15]{PSU_hd} G.P. Paternain, M. Salo, G. Uhlmann, {\it Invariant distributions, Beurling transforms and tensor tomography in higher dimensions}, Math. Ann. {\bf 363} (2015) 305--362.


\bibitem[PSUZ16]{PSUZ16} G. P. Paternain, M. Salo, G. Uhlmann, H. Zhou, {\it The geodesic X-ray transform with matrix weights}, Amer. J. Math. {\bf 141} (2019) 1707--1750.

\bibitem[PS88]{PS} L. Pestov, V.A. Sharafutdinov,
{\it Integral geometry of tensor fields on a manifold of negative curvature,}
Siberian Math. J. {\bf 29} (1988) 427--441.

\bibitem[PU05]{PestovUhlmann} L.\ Pestov, G.\ Uhlmann,
{\it Two dimensional compact simple Riemannian manifolds are
boundary distance rigid},
Ann. of Math. {\bf 161} (2005), 1089--1106.



\bibitem[Sh94]{Sh} V.A. Sharafutdinov, Integral geometry of tensor fields. Inverse and Ill-Posed Problems Series. VSP, Utrecht, 1994.


\bibitem[SU05]{SU3} P. Stefanov and G. Uhlmann, {\it Boundary rigidity and stability for generic simple metrics}, J. Amer. Math. Soc., \textbf{18} (2005) 975--1003.

\bibitem[SU09]{SU09} P. Stefanov, G. Uhlmann, {\it Local lens rigidity with incomplete data for a class of non-simple Riemannian manifolds,} J. Differential Geom. {\bf 82} (2009) 383--409.


\bibitem[SUV21]{SUV_full}
P. Stefanov, G. Uhlmann, A. Vasy, {\it Local and global boundary rigidity and the geodesic X-ray transform in the normal gauge,}  Ann. of Math. {\bf 194} (2021) 1--95.

\bibitem[SY18]{StefanovYang}
P.\ Stefanov, Y.\ Yang, The inverse problem for the Dirichlet-to-Neumann map on Lorentzian manifolds,  Anal. PDE {\bf 11} (2018) 1381--1414.


\bibitem[UV16]{UhlmannVasy} G.\ Uhlmann, A.\ Vasy, \textit{The inverse problem for the local geodesic ray transform},  Invent. Math. {\bf 205} (2016)  83--120.


\bibitem[Z17]{Zhou2017} H. Zhou, {\it  Generic injectivity and stability of inverse problems for connections,} Comm. Partial Diff. Eq. {\bf 42} (2017) 780--801.


\end{thebibliography}

\end{document}